\renewcommand{\le}{\leqslant}
\renewcommand{\ge}{\geqslant}
\newcommand{\tr}{\mathrm{tr}}
\renewcommand{\mod}{\,\mathrm{mod}\,}
\newtheorem{theorem}{Theorem}[section]
\newtheorem{lemma}[theorem]{Lemma}
\newtheorem{remark}[theorem]{Remark}
\newtheorem{observation}[theorem]{Observation}
\newtheorem{corollary}[theorem]{Corollary}
\theoremstyle{definition}
\newtheorem{example}[theorem]{Example}
\newcommand{\tocorrect}[1]{}
\setlist[enumerate,1]{label=(\roman*)}
\numberwithin{equation}{section}
\begin{document}
	\title{$m$-isometric weighted shifts with operator weights}
	\author{Michał Buchała}
	\email{mbuchala@agh.edu.pl}	%\orcidID{0000-0001-5272-9600}
	\address{AGH University of Krakow, Faculty of Applied Mathematics, al. A. Mickiewicza 30, 30-059 Krakow}
	\begin{abstract}
		The aim of this paper is to study $ m $-isometric weighted shifts with operator weights (both unilateral and bilateral). We obtain a characterization of such shifts by polynomials with operator coefficients. The procedure of construction of all $ m $-isometric weighted shifts with positive weights is presented. We answer the question when the weights of $ m $-isometric shift are commuting. The completion problem for $ m $-isometric weighted shifts with operator weights is solved. We characterize $ m $-isometric bilateral shifts the adjoints of which are also $ m $-isometric.
		
	\end{abstract}
   	\maketitle
	\section{Introduction}
\label{SecIntroduction}
Classical weighted shifts (both unilateral and bilateral) form an important class of operators on Hilbert space, which is a rich source of examples in operator theory. They have been studied extensively for many years (see \cite{ShieldsWeightedShiftOperators1974} for comprehensive survey in this topic). In this paper we deal with a certain generalization of such operators, namely weighted shifts with operator weights. We consider unilateral weighted shifts:
\begin{equation*}
    \ell^{2}(\mathbb{N},H)\ni(h_{0},h_{1},\ldots)\longmapsto (0,S_{1}h_{0},S_{2}h_{1},\ldots)\in \ell^{2}(\mathbb{N},H)
\end{equation*}
and bilateral weighted shifts:
\begin{equation*}
    \ell^{2}(\mathbb{Z},H)\ni(\ldots,h_{-1},\boxed{h_{0}},h_{1},\ldots)\longmapsto (\ldots,S_{-1}h_{-2},\boxed{S_{0}h_{-1}},S_{1}h_{0},\ldots)\in \ell^{2}(\mathbb{Z},H),
\end{equation*}
where $ \boxed{\,\cdot \,} $ denotes the zeroth term of the two-sided sequence.
In \cite{lambertUnitaryEquivalence1971} Lambert investigated unitary equivalence and reducibility of unilateral weighted shifts with operator weights (see \cite{pilidiReducibilityOfTwoSidedWeightedShift1978} and \cite{buchalaUnitarilyEquivalentBilateralWeightedShifts2024} for research on similar subject in case of bilateral shifts). The same author described the weakly-closed algebra generated by such an operator (see \cite{lambertAlgebraGeneratedByInvertiblyShift1972}). \par
$ m $-isometric operators were introduced by Agler in \cite{aglerDisconjugacyTheoremToeplitz1990} and investigated further in \cite{aglerMisometricTransformationsHilbert1995} (and its subsequent parts). From that time on $ m $-isometries are subject of long-term research in operator theory (see \cite{bermudezProductsMisometries2013}, \cite{jablonskiMisometricOperatorsTheir2020}, \cite{koMisometricToeplitzOperators2018}, \cite{suciuOperatorsExpansiveMisometric2022},\cite{badeaCauchyDual2isometric2019}, \cite{bermudezLocalSpectral2024}). In \cite{jablosnkiHyperexpansiveOperatorValuedUWS} Jabłoński characterized 2-isometric unilateral weighted shifts with operator weights. In \cite{anandSolutionCauchyDual2019}, on the occasion of solving the Cauchy dual subnormality problem for 2-isometries, Anand et.al. showed that a non-unitary 2-isometric operator satisfying so-called kernel condition is (up to a unitary equivalence) the orthogonal sum of unitary operator and unilateral weighted shift with positive and invertible weights (see \cite{kosmiderWoldTypeDecomposition2021} for certain generalization of this fact in case of general $ m $-isometries).\par
$ m $-isometric weighted shifts (with scalar weights) were characterized first by Berm\'{u}dez et.al. in \cite{bermudezWeightedShiftsWhich2010}. Some time later, in \cite{abdullahStructureMisometricWeighted2016} there appeared a more convenient characterization of $ m $-isometric shifts by real polynomials of degree at most $ m-1 $. Our goal is to generalize the results of \cite{abdullahStructureMisometricWeighted2016}. The paper is organized as follows. In Section \ref{SecPreliminaries} we set up notation and terminology and review some of the standard facts on weighted shifts with operator weights. Section \ref{SecMIsometricUWS} is devoted to study $ m $-isometric unilateral weighted shifts. Theorem \ref{ThmCharOfMIsometricUWSByOperatorPolynomial} characterizes $ m $-isometric unilateral weighted shifts with operator weights by polynomials of degree at most $ m-1 $ with operator coefficients; it may be viewed as a certain generalization of \cite[Theorem 1]{abdullahStructureMisometricWeighted2016}. In Theorem \ref{ThmConstructingMIsometricUWS} we give the method of constructing all $ m $-isometric weighted shifts with positive and invertible weights. Theorem \ref{ThmWhenWeightsCommuteUWS} answers the question when the weights of $ m $-isometric unilateral weighted shifts commute; it turns out that they commute if and only if the coefficients of polynomial given by Theorem \ref{ThmCharOfMIsometricUWSByOperatorPolynomial} commute. We also solve the completion problem for $ m $-isometric unilateral weighted shifts with operator weights (see Theorem \ref{ThmCompletingWeightsToMIsometricUWS}). Several examples are also provided. The aim of Section \ref{SecMIsometricBWS} is to provide counterparts of results presented in Section \ref{SecMIsometricUWS} for bilateral weighted shifts. Beyond that it is shown that the adjoint of $ m $-isometric bilateral weighted shift with operator weights is also $ m $-isometric if and only if the polynomial given by Theorem \ref{ThmConstructingMIsometricBWS} is invertible (as an element of the algebra of polynomials with operator coefficients). This allows us to answer the question in \cite[Problem 4.9]{bermudezLocalSpectral2024} in negative.

	\section{Preliminaries}
\label{SecPreliminaries}

By $ \mathbb{Z} $ we denote the set of integers and by $ \mathbb{N} $ the set of non-negative integers. For $ p\in \mathbb{Z} $ we define
\begin{equation*}
    \mathbb{Z}_{p} = \left\{ n\in \mathbb{Z}\colon n\ge p \right\}.
\end{equation*}
We denote by $ \mathbb{R} $ and $ \mathbb{C} $ the fields of real and complex numbers, respectively. For simplicity of notation we write $ (a_{i})_{i\in I} \subset A $, if $ (a_{i})_{i\in I} $ is the sequence of elements of a set $ A $. If $ A $ is $ n\times n $ matrix, then $ \tr\, M $ stands for the trace of $ M $. \par
If $ R $ is an algebra over a field $ \mathbb{K} \in \left\{ \mathbb{R}, \mathbb{C} \right\} $, then $ R[z] $ stands for the algebra of all polynomials with coefficients in $ R $ with standard addition and multiplication. For $ n\in \mathbb{N} $, $ R_{n}[z] $ denotes the vector space over $ \mathbb{K} $ of all polynomials with coefficients in $ R $ of degree less than or equal to $ n $.

Throughout this paper all Hilbert spaces are assumed to be complex. Let $ H $ be a Hilbert space. By $ \mathbf{B}(H) $ we denote the $ C^{\ast} $-algebra of all linear and bounded operators on $ H $. For $ T\in \mathbf{B}(H) $, we will write $ \mathcal{N}(T) $ for the kernel and $ \mathcal{R}(T) $ for the range of $ T $; by $ \sigma(T) $ and $ r(T) $ we denote the spectrum and the spectral radius of $ T $, respectively. For $ T\in \mathbf{B}(H) $ we have the following so-called range-kernel decomposition:
\begin{equation}
    \label{FormRangeKernelDecom}
    H = \mathcal{N}(T) \oplus \overline{\mathcal{R}(T^{\ast})}.
\end{equation}
An operator $ U\in \mathbf{B}(\mathcal{H}) $ is called a partial isometry if $ U|_{\mathcal{N}(U)^{\perp}} $ is isometric. For every $ T\in \mathbf{B}(\mathcal{H}) $ there exists the unique partial isometry $ U\in \mathbf{B}(\mathcal{H}) $ such that $ \mathcal{N}(T) = \mathcal{N}(U) $ and $ T = U\lvert T\rvert $, where $ \lvert T\rvert = (T^{\ast}T)^{1/2} $ (see \cite[Theorem 7.20]{weidmannLinearOperatorsInHilbertSpace}); the decomposition $ T = U\lvert T\rvert $ is called the polar decomposition of $ T $. The operators $ T,S\in \mathbf{B}(H) $ are unitarily equivalent if there exists a unitary operator $ U\in \mathbf{B}(H) $ such that $ US = TU $. Note that if $ T $ and $ S $ are unitarily equivalent, then so are $ T^{\ast} $ and $ S^{\ast} $ (by the same unitary operator). The operators $ T,S\in \mathbf{B}(H) $ are doubly commuting if $ TS = ST $ and $ T^{\ast}S = ST^{\ast} $.
An operator $ T\in \mathbf{B}(H) $ is called $ m $-isometry ($ m\in \mathbb{Z}_{1} $) if
\begin{equation}
    \label{FormDefMIsometricOperator}
    \beta_{m}(T):= \sum_{k=0}^{m}\binom{m}{k}(-1)^{m-k}T^{\ast k}T^{k} = 0;
\end{equation}
$ T $ is called positive if $ \langle Tf,f\rangle > 0 $ for every $ f\in H $, $ f\not=0 $. 
For $ N\in \left\{ \mathbb{N}, \mathbb{Z} \right\} $ we define
\begin{equation*}
    \ell^{2}(N,H) = \{(h_{i})_{i\in N} \subset H\colon \sum_{i\in N}\lVert h_{i}\rVert^{2} < \infty \};
\end{equation*}
this is a Hilbert space with the inner product given by the formula
\begin{equation*}
    \langle h,h'\rangle = \sum_{i\in N} \langle h_{i},h_{i}'\rangle, \quad h = (h_{i})_{i\in N},h' = (h_{i}')_{i\in N}\in \ell^{2}(N,H).
\end{equation*}
Every operator $ T\in \mathbf{B}(\ell^{2}(N,H)) $ has a matrix representation $ [T_{i,j}]_{i,j\in N} $, where $ T_{i,j}\in \mathbf{B}(H) $, satisfying the following formula (see \cite[Chapter 8]{halmosHilbertSpaceProblemBook}):
\begin{equation*}
    T(x_{i})_{i\in N} = \left( \sum_{j\in N} T_{i,j} x_{j} \right)_{i\in N}, \quad (x_{i})_{i\in N} \in \ell^{2}(N,H).
\end{equation*}
Observe that if $ T = [T_{i,j}]_{i,j\in N}, S = [S_{i,j}]_{i,j\in N} \in \mathbf{B}(\ell^{2}(N,H)) $, then
\begin{equation}
    \label{FormAdjointOfMatrix}
    T^{\ast} = [T^{\ast}_{j,i}]_{i,j\in N}
\end{equation}
and
\begin{equation}
    \label{FormProductOfMatrices}
    TS = \left[ \sum_{k\in N} T_{i,k}S_{k,j} \right]_{i,j\in N}.
\end{equation}
If $ (S_{i})_{i\in \mathbb{Z}_{1}} \subset \mathbf{B}(H) $ is a uniformly bounded sequence of operators, then we define the unilateral weighted shift $ S\in \mathbf{B}(\ell^{2}(\mathbb{N},H)) $ with weights $ (S_{i})_{i\in \mathbb{Z}_{1}} $ as follows
\begin{equation*}
    S(h_{i})_{i\in \mathbb{N}} = (0, S_{1}h_{0}, S_{2}h_{1}, \ldots), \quad (h_{i})_{i\in \mathbb{N}}\in \ell^{2}(\mathbb{N},H).
\end{equation*}
It is a straightforward computation to prove that (cf. \cite[Lemma 2.2]{lambertUnitaryEquivalence1971}) 
\begin{equation}
    \label{FormNormOfUWS}
    \lVert S\rVert = \sup_{n\in \mathbb{Z}_{1}} \lVert S_{n}\rVert.
\end{equation}The matrix representation of $ S $ takes the form
\begin{equation*}
    S = \begin{bmatrix}
        0 & 0 & 0 & 0 & \ddots\\
        S_{1} & 0 & 0 & 0 & \ddots \\
        0 & S_{2} & 0 & 0 & \ddots\\
        0 & 0 & S_{3} & 0 & \ddots\\
        \ddots & \ddots & \ddots & \ddots & \ddots
    \end{bmatrix}.
\end{equation*}
Similarly, if $ (S_{i})_{i\in \mathbb{Z}} \subset \mathbf{B}(H) $ is a uniformly bounded sequence of operators, we define the bilateral weighted shift $ S\in \mathbf{B}(\ell^{2}(\mathbb{Z},H)) $ with weights $ (S_{i})_{i\in \mathbb{Z}} $ by the formula
\begin{equation*}
    S(h_{i})_{i\in \mathbb{Z}} = (S_{i}h_{i-1})_{i\in \mathbb{Z}}, \quad (h_{i})_{i\in \mathbb{Z}}\in \ell^{2}(\mathbb{Z},H);
\end{equation*}
the matrix representation of this operator takes the form
\begin{equation}
    \label{FormMatrixOfBWS}
    S = \begin{bmatrix}
        \ddots &  \ddots & \ddots & \ddots & \ddots\\
        \ddots & 0 & 0 & 0 & \ddots\\
        \ddots &  S_{0} & \boxed{0} & 0 & \ddots\\
        \ddots & 0 & S_{1} & 0 & \ddots\\
        \ddots & \ddots & \ddots & \ddots & \ddots
    \end{bmatrix},
\end{equation}
where $ \boxed{\,\cdot\,} $ denotes the entry in zeroth column and zeroth row of this matrix. As in the case of unilateral shifts,
\begin{equation}
    \label{FormNormOfBWS}
    \lVert S\rVert = \sup_{n\in \mathbb{Z}}\lVert S_{n}\rVert.
\end{equation} 
It turns out that the adjoint of a bilateral weighted shift is also (up to a unitary equivalence) a bilateral weighted shift.
\begin{lemma}
    \label{LemOnAdjointOfBWS}
    Let $ H $ be a Hilbert space. If $ S\in \mathbf{B}(\ell^{2}(\mathbb{Z}),H) $ is a bilateral weighted shift with weights $ (S_{j})_{j\in \mathbb{Z}}\subset \mathbf{B}(H) $, then $ S^{\ast} $ is unitarily equivalent to a bilateral weighted shift with weights $ (S_{-j+1}^{\ast})_{j\in \mathbb{Z}} $.
\end{lemma}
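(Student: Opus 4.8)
The plan is to produce an explicit unitary operator implementing the claimed equivalence. The natural candidate is the ``reflection'' $U\in\mathbf{B}(\ell^{2}(\mathbb{Z},H))$ given by $(Uh)_{n}=h_{-n}$ for $h=(h_{i})_{i\in\mathbb{Z}}$ and $n\in\mathbb{Z}$. One checks in one line that $U$ preserves the inner product and is onto (since $U^{2}=I$), so that $U$ is a unitary involution with $U^{-1}=U$.

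First I would write down the action of $S^{\ast}$. Combining the matrix representation \eqref{FormMatrixOfBWS} of $S$ with the adjoint formula \eqref{FormAdjointOfMatrix}, the $(i,j)$-entry of $S^{\ast}$ equals $S_{i+1}^{\ast}$ if $j=i+1$ and vanishes otherwise; equivalently, $(S^{\ast}h)_{i}=S_{i+1}^{\ast}h_{i+1}$ for every $i\in\mathbb{Z}$ (this may also be read off directly from $\langle Sh,h'\rangle=\langle h,S^{\ast}h'\rangle$). Next I would conjugate: for $h\in\ell^{2}(\mathbb{Z},H)$ and $i\in\mathbb{Z}$,
\[
 (US^{\ast}U^{-1}h)_{i}=(S^{\ast}Uh)_{-i}=S_{-i+1}^{\ast}(Uh)_{-i+1}=S_{-i+1}^{\ast}h_{i-1}.
\]

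Setting $T_{j}:=S_{-j+1}^{\ast}$ for $j\in\mathbb{Z}$, the sequence $(T_{j})_{j\in\mathbb{Z}}$ is uniformly bounded, since $\sup_{j}\lVert T_{j}\rVert=\sup_{j}\lVert S_{-j+1}\rVert=\sup_{j}\lVert S_{j}\rVert<\infty$ by \eqref{FormNormOfBWS}, so it is the weight sequence of a bilateral weighted shift $T$ with $(Th)_{i}=T_{i}h_{i-1}$. The displayed computation then reads $US^{\ast}U^{-1}=T$, i.e.\ $US^{\ast}=TU$, which is precisely the assertion that $S^{\ast}$ is unitarily equivalent to the bilateral weighted shift with weights $(S_{-j+1}^{\ast})_{j\in\mathbb{Z}}$. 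I do not expect any genuine obstacle here; the only delicate point is the index bookkeeping---one must verify that after conjugation the weight $T_{i}$ lands in the subdiagonal slot $(i,i-1)$, and this is exactly what forces the reindexing $j\mapsto -j+1$ (a reflection followed by a unit translation) rather than the plain reflection $j\mapsto -j$.
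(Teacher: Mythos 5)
Your proof is correct and follows essentially the same route as the paper: the same reflection unitary $U$, the same formula $(S^{\ast}h)_{i}=S_{i+1}^{\ast}h_{i+1}$, and the same conjugation computation yielding $US^{\ast}=TU$ with $T$ the shift with weights $(S_{-j+1}^{\ast})_{j\in\mathbb{Z}}$. The only (harmless) addition is your explicit check that the new weight sequence is uniformly bounded.
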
 
\begin{proof}
    By \eqref{FormAdjointOfMatrix} and \eqref{FormMatrixOfBWS}, we have
    \begin{equation*}
        S^{\ast} = \begin{bmatrix}
            \ddots &  \ddots & \ddots & \ddots & \ddots\\
            \ddots & 0 & S_{0}^{\ast} & 0 & \ddots\\
            \ddots &  0 & \boxed{0} & S_{1}^{\ast} & \ddots\\
            \ddots & 0 & 0 & 0 & \ddots\\
            \ddots & \ddots & \ddots & \ddots & \ddots
        \end{bmatrix},
    \end{equation*}
    so $ S^{\ast}(x_{j})_{j\in \mathbb{Z}} = (S_{j+1}^{\ast}x_{j+1})_{j\in \mathbb{Z}} $ for $ (x_{j})_{j\in \mathbb{Z}} \in \ell^{2}(\mathbb{Z},H) $. Denote by $ \widehat{S} $ the bilateral weighted shift with weights $ (S_{-j+1}^{\ast})_{j\in \mathbb{Z}} $. Define $ U\in \mathbf{B}(\ell^{2}(\mathbb{Z},H)) $ by the formula:
    \begin{equation*}
        U(x_{j})_{j\in \mathbb{Z}} = (x_{-j})_{j\in \mathbb{Z}}, \quad (x_{j})_{j\in \mathbb{Z}} \in \ell^{2}(\mathbb{Z},H).
    \end{equation*}
    Clearly, $ U $ is a unitary operator. Moreover,
    \begin{equation*}
        \widehat{S}U(x_{j})_{j\in \mathbb{Z}} = \widehat{S}(x_{-j})_{j\in \mathbb{Z}} = (S_{-j+1}^{\ast}x_{-j+1})_{j\in \mathbb{Z}}, \quad (x_{j})_{j\in \mathbb{Z}}\in \ell^{2}(\mathbb{Z},H),
    \end{equation*}
    and
    \begin{equation*}
        US^{\ast}(x_{j})_{j\in \mathbb{Z}} = U(S_{j+1}^{\ast}x_{j+1})_{j\in \mathbb{Z}} = (S_{-j+1}^{\ast}x_{-j+1})_{j\in \mathbb{Z}}, \quad (x_{j})_{j\in \mathbb{Z}}\in \ell^{2}(\mathbb{Z},H),
    \end{equation*}
    so $ \widehat{S}U = US^{\ast} $. Therefore, $ U $ makes $ S^{\ast} $ and $ \widehat{S} $ unitarily equivalent.
\end{proof}
It can be easily seen that
\begin{equation}
    \label{FormWhenBWSIsInvertible}
    S \text{ is invertible} \iff S_{j} \text{ is invertible for every } j\in \mathbb{Z}.
\end{equation}
The proof of the following observation is a straightforward application of \eqref{FormAdjointOfMatrix} and \eqref{FormProductOfMatrices} (see also \cite[Lemma 2.1]{buchalaUnitarilyEquivalentBilateralWeightedShifts2024})
\begin{observation}
    \label{ObsNthPowerOfShift}
    Let $ H $ be a Hilbert space. 
    \begin{enumerate}
        \item If $ S\in \mathbf{B}(\ell^{2}(\mathbb{N}),H) $ is a unilateral weighted shift with weights $ (S_{i})_{i\in \mathbb{Z}_{1}}\subset \mathbf{B}(H) $, then for every $ n\in \mathbb{Z}_{1} $,
        \begin{equation}
            \label{FormNthPowerOfUWS}
            (S^{\ast n}S^{n})_{i,j} = \begin{cases}
                \lvert S_{i+n}\cdots S_{i+1}\rvert^{2}, & \text{if } i = j\\
                0, & \text{otherwise}
            \end{cases}, \quad i,j\in \mathbb{N},
        \end{equation}
        are the entries of the matrix representation of $ S^{\ast n}S^{n} $.
        \item If $ S\in \mathbf{B}(\ell^{2}(\mathbb{Z}),H) $ is a bilateral weighted shift with weights $ (S_{i})_{i\in \mathbb{Z}}\subset \mathbf{B}(H) $, then for every $ n\in \mathbb{Z}_{1} $ the entries of the matrix representation of $ S^{\ast n}S^{n} $ are given by \eqref{FormNthPowerOfUWS} for all $ i,j\in \mathbb{Z} $.
    \end{enumerate}
\end{observation}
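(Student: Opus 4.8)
The plan is to read off the matrix of $S^{n}$ directly and then apply \eqref{FormAdjointOfMatrix} and \eqref{FormProductOfMatrices} twice.

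First I would record that, by the displayed matrix representation of $S$, in the unilateral case we have $S_{i,j}=S_{i}$ when $j=i-1$ (with $i\ge 1$) and $S_{i,j}=0$ otherwise, and that in the bilateral case the same description holds for all $i,j\in\mathbb{Z}$ by \eqref{FormMatrixOfBWS}. An easy induction on $n$ using \eqref{FormProductOfMatrices} then shows that $(S^{n})_{i,j}=S_{i}S_{i-1}\cdots S_{i-n+1}$ if $j=i-n$ and $(S^{n})_{i,j}=0$ otherwise: in the inductive step one computes $(S^{n+1})_{i,j}=\sum_{k}(S^{n})_{i,k}S_{k,j}$, and the only possibly nonzero summand is the one with $k=i-n$, which then forces $j=k-1=i-(n+1)$.

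Next, \eqref{FormAdjointOfMatrix} gives $(S^{\ast n})_{i,j}=\big((S^{n})_{j,i}\big)^{\ast}$, so $(S^{\ast n})_{i,j}=(S_{i+n}S_{i+n-1}\cdots S_{i+1})^{\ast}$ when $j=i+n$ and $(S^{\ast n})_{i,j}=0$ otherwise. Applying \eqref{FormProductOfMatrices} once more, $(S^{\ast n}S^{n})_{i,j}=\sum_{k}(S^{\ast n})_{i,k}(S^{n})_{k,j}$, and the unique index $k$ for which both factors can be nonzero is $k=i+n$; this forces $j=i$, and the resulting diagonal entry equals $(S_{i+n}\cdots S_{i+1})^{\ast}(S_{i+n}\cdots S_{i+1})=\lvert S_{i+n}\cdots S_{i+1}\rvert^{2}$, which is \eqref{FormNthPowerOfUWS}. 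The bilateral statement follows by exactly the same computation with $\mathbb{N}$ replaced by $\mathbb{Z}$, the reference to the matrix of $S$ being \eqref{FormMatrixOfBWS} instead.

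There is no real obstacle here beyond keeping track of indices; the only point deserving a moment's care is the ``boundary'' of $\mathbb{N}$ in the unilateral case, that is, the absence of a $(0,-1)$-entry of $S$. This causes no trouble: every weight occurring in $(S^{\ast n}S^{n})_{i,i}$ is among $S_{i+1},\ldots,S_{i+n}$, all of which have index $\ge 1$, so the diagonal formula is valid for every $i\in\mathbb{N}$, and likewise the induction for $S^{n}$ is unaffected since a vanishing $(0,-1)$-entry only kills terms that are already absent from the claimed formula.
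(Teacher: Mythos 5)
Your proof is correct and is exactly the argument the paper has in mind: the paper omits the proof, remarking only that it is a straightforward application of \eqref{FormAdjointOfMatrix} and \eqref{FormProductOfMatrices}, which is precisely what you carry out (induction for $S^{n}$, then the adjoint and product formulas). Your remark on the boundary of $\mathbb{N}$ is a sensible extra check and introduces no discrepancy.
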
	

	\section{$ m $-isometric unilateral weighted shifts}
\label{SecMIsometricUWS}
In this section we are going to characterize $ m $-isometric unilateral weighted shifts with invertible weights. Let us state the main result of this section. It is a natural generalization of \cite[Theorem 1]{abdullahStructureMisometricWeighted2016}.
\begin{theorem}
    \label{ThmCharOfMIsometricUWSByOperatorPolynomial}
    Let $ H $ be a Hilbert space. Suppose $ S\in \mathbf{B}(\ell^{2}(\mathbb{N}),H) $ is a unilateral weighted shift with invertible weights $ (S_{j})_{j\in \mathbb{Z}_{1}} $. For $ m\in \mathbb{Z}_{1} $ the following conditions are equivalent:
    \begin{enumerate}
        \item $ S $ is $ m $-isometric,
        \item for every $ x\in H $, $ x\not=0 $, the classical weighted shift $ S_{x}\in \ell^{2}(\mathbb{N}, \mathbb{C}) $ with the weights $ (\alpha_{j}(x))_{j\in \mathbb{Z}_{1}}\subset (0,\infty) $ given by
        \begin{equation*}
            \alpha_{j}(x) = \begin{cases}
                \lVert S_{1}x\rVert, & j = 1\\
                \frac{\lVert S_{j}\cdots S_{1}x\rVert}{\lVert S_{j-1}\cdots S_{1}x\rVert}, & j\ge 2
            \end{cases},
        \end{equation*}
        is $ m $-isometric,
        \item there exists a polynomial $ p\in \mathbf{B}(H)_{m-1}[z] $ such that $ p(0) = I $ and
        \begin{equation}
            \label{FormUWSPolynomialAtNaturalNumbers}
            p(n) = \lvert S_{n}\cdots S_{1}\rvert^{2}, \quad n\in \mathbb{Z}_{1}.
        \end{equation}
    \end{enumerate}
    Moreover, if (iii) holds, then
    \begin{equation}
        \label{FormNormOfMIsometricUWS}
        \lVert S\rVert^{2} = \sup\left\{ \frac{\langle p(n+1)h,h\rangle}{\langle p(n)h,h\rangle}\!: n\in \mathbb{N}, h\in H, h\not=0 \right\}.
    \end{equation}
\end{theorem}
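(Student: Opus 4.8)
The plan is to reduce each of (i)--(iii) to a single finite-difference identity. First I would fix notation: set $B_{0} := I$ and $B_{n} := S_{n}\cdots S_{1}$ for $n\in\mathbb{Z}_{1}$; since the weights are invertible, every $B_{n}$ is invertible. Put $T_{n} := \lvert B_{n}\rvert^{2} = B_{n}^{\ast}B_{n}$, so that $T_{0} = I$, each $T_{n}$ is positive and invertible, and $T_{n} = \lvert S_{n}\cdots S_{1}\rvert^{2}$. By Observation \ref{ObsNthPowerOfShift}(i) the matrix of $S^{\ast k}S^{k}$ is diagonal with $(i,i)$ entry $\lvert S_{i+k}\cdots S_{i+1}\rvert^{2}$; from $B_{i+k} = (S_{i+k}\cdots S_{i+1})B_{i}$ (the empty-product convention covering $k=0$) one obtains $\lvert S_{i+k}\cdots S_{i+1}\rvert^{2} = (B_{i}^{-1})^{\ast}T_{i+k}B_{i}^{-1}$, whence the $(i,i)$ entry of $\beta_{m}(S)$ equals $(B_{i}^{-1})^{\ast}(\Delta^{m}T)_{i}B_{i}^{-1}$, where $(\Delta^{m}T)_{i} := \sum_{k=0}^{m}\binom{m}{k}(-1)^{m-k}T_{i+k}$ denotes the $m$-th forward difference. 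Because $B_{i}$ is invertible,
\begin{equation}
    \label{FormMIsometryReduction}
    S \text{ is $m$-isometric} \iff (\Delta^{m}T)_{i} = 0 \text{ for every } i\in\mathbb{N}.
\end{equation}

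For (i)$\Leftrightarrow$(iii) I would invoke the standard fact that a sequence in a vector space is annihilated by the $m$-th forward difference if and only if it is the restriction to $\mathbb{N}$ of a polynomial of degree at most $m-1$; concretely $p(z) := \sum_{k=0}^{m-1}\binom{z}{k}(\Delta^{k}T)_{0}$ works, and since $\binom{z}{k}$ is a scalar polynomial of degree $k$, this $p$ lies in $\mathbf{B}(H)_{m-1}[z]$, with $p(0) = T_{0} = I$ and $p(n) = T_{n} = \lvert S_{n}\cdots S_{1}\rvert^{2}$. Conversely any such $p$ satisfies $\Delta^{m}p \equiv 0$, so $(\Delta^{m}T)_{i} = (\Delta^{m}p)(i) = 0$. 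Together with \eqref{FormMIsometryReduction} this is exactly the equivalence of (i) and (iii).

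For (i)$\Leftrightarrow$(ii), fix $x\in H$, $x\neq0$, and note $\langle T_{i}x,x\rangle = \lVert B_{i}x\rVert^{2} > 0$. A telescoping computation gives $\alpha_{i+1}(x)\cdots\alpha_{i+n}(x) = \lVert B_{i+n}x\rVert/\lVert B_{i}x\rVert$ for all $i\in\mathbb{N}$, $n\in\mathbb{Z}_{1}$ (the case $i=0$ absorbing the first weight $\alpha_{1}(x) = \lVert S_{1}x\rVert = \lVert B_{1}x\rVert$, with $\lVert B_{0}x\rVert = \lVert x\rVert$), so $(\alpha_{i+1}(x)\cdots\alpha_{i+k}(x))^{2} = \langle T_{i+k}x,x\rangle/\langle T_{i}x,x\rangle$. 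Since, again by Observation \ref{ObsNthPowerOfShift}(i), $S_{x}^{\ast k}S_{x}^{k}$ is diagonal with $(i,i)$ entry $(\alpha_{i+1}(x)\cdots\alpha_{i+k}(x))^{2}$, the shift $S_{x}$ is $m$-isometric if and only if $\sum_{k=0}^{m}\binom{m}{k}(-1)^{m-k}\langle T_{i+k}x,x\rangle = 0$, i.e. $\langle(\Delta^{m}T)_{i}x,x\rangle = 0$, for every $i\in\mathbb{N}$. Thus (ii) amounts to $\langle(\Delta^{m}T)_{i}x,x\rangle = 0$ for all $i$ and all $x\neq0$; as each $(\Delta^{m}T)_{i}$ is self-adjoint (a real combination of the positive $T_{j}$) and $H$ is complex, this forces $(\Delta^{m}T)_{i} = 0$ for all $i$, which by \eqref{FormMIsometryReduction} is (i); the reverse implication is immediate.

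For the norm formula, assume (iii) and compute, for $n\in\mathbb{Z}_{1}$, $\lVert S_{n}\rVert^{2} = \lVert\lvert S_{n}\rvert^{2}\rVert = \lVert(B_{n-1}^{-1})^{\ast}T_{n}B_{n-1}^{-1}\rVert$, using $T_{n} = B_{n-1}^{\ast}\lvert S_{n}\rvert^{2}B_{n-1}$; since this operator is positive, substituting $h = B_{n-1}^{-1}f$ (a bijection of $H\setminus\{0\}$) and $\lVert f\rVert^{2} = \langle T_{n-1}h,h\rangle$ yields $\lVert S_{n}\rVert^{2} = \sup\{\langle T_{n}h,h\rangle/\langle T_{n-1}h,h\rangle : h\in H,\ h\neq0\}$. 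Taking the supremum over $n\in\mathbb{Z}_{1}$, invoking \eqref{FormNormOfUWS}, reindexing $n\mapsto n+1$ and recalling $T_{n}=p(n)$ then gives \eqref{FormNormOfMIsometricUWS}. The routine pieces are the telescoping identity and this change of variables; the step needing the most care is the conjugation identity $\lvert S_{i+k}\cdots S_{i+1}\rvert^{2} = (B_{i}^{-1})^{\ast}T_{i+k}B_{i}^{-1}$ underlying \eqref{FormMIsometryReduction} — in particular the bookkeeping of empty products at $k=0$ and the observation that conjugation by the invertible $B_{i}$ preserves the equation $\beta_{m}(S)=0$ — since everything after \eqref{FormMIsometryReduction} is a short consequence.
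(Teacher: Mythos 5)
Your proof is correct, but it takes a genuinely more self-contained route than the paper's. The paper proves (i)$\Leftrightarrow$(ii) by combining Observation \ref{ObsNthPowerOfShift} with the scalar-weight criterion of \cite[Proposition 3.2]{bermudezWeightedShiftsWhich2010}, proves (i)$\Rightarrow$(iii) by invoking the Agler--Stankus identity $S^{\ast k}S^{k}=\sum_{n=0}^{m-1}\frac{1}{n!}\beta_{n}(S)\,k(k-1)\cdots(k-n+1)$ from \cite{aglerMisometricTransformationsHilbert1995} and reading off the $(0,0)$ entry, and closes the cycle (iii)$\Rightarrow$(ii) via Lemma \ref{LemOperatorPolynomialFromCharOfMIsometricUWS} together with the scalar result \cite[Theorem 1]{abdullahStructureMisometricWeighted2016}. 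You instead reduce everything to the single identity $(\beta_{m}(S))_{i,i}=(B_{i}^{-1})^{\ast}(\Delta^{m}T)_{i}B_{i}^{-1}$ (which is correct, and is exactly what makes the invertibility hypothesis work), then obtain (i)$\Leftrightarrow$(iii) from Newton's forward-difference interpolation $p(z)=\sum_{k=0}^{m-1}\binom{z}{k}(\Delta^{k}T)_{0}$ and the fact that $\Delta^{m}$ kills polynomials of degree at most $m-1$, and (i)$\Leftrightarrow$(ii) from the diagonal computation plus the complex-polarization fact that $\langle Ax,x\rangle=0$ for all $x$ forces $A=0$. This buys independence from the three external citations (your Newton formula is, in effect, a local proof of the Agler--Stankus identity at the $(0,0)$ entry) and it bypasses Lemma \ref{LemOperatorPolynomialFromCharOfMIsometricUWS} for this theorem; the paper's route, by contrast, leans on known scalar results and keeps the new computation minimal. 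The norm formula is handled by essentially the same change of variables as in the paper.

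One shared imprecision worth noting: with $\alpha_{1}(x)=\lVert S_{1}x\rVert$ as literally defined, your telescoping identity $\alpha_{1}(x)\cdots\alpha_{n}(x)=\lVert B_{n}x\rVert/\lVert B_{0}x\rVert$ fails for $\lVert x\rVert\neq 1$ (the left-hand side equals $\lVert B_{n}x\rVert$), and in fact (i)$\Rightarrow$(ii) is literally false for non-unit $x$ in this reading; the statement evidently intends $\alpha_{1}(x)=\lVert S_{1}x\rVert/\lVert x\rVert$ or a restriction of (ii) to unit vectors, and the paper's own computation is loose at the same spot. Under that intended reading your argument is complete, and since (ii)$\Rightarrow$(i) only needs unit vectors, the equivalence itself is unaffected.
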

Before we prove this theorem, we need a technical lemma.
\begin{lemma}
    \label{LemOperatorPolynomialFromCharOfMIsometricUWS}
    Let $ H $ and $ S $ be as in Theorem \ref{ThmCharOfMIsometricUWSByOperatorPolynomial}. If $ p\in \mathbf{B}(H)_{m-1}[z] $ is a polynomial as in Theorem \ref{ThmCharOfMIsometricUWSByOperatorPolynomial}(iii), then every coefficient of $ p $ is a linear combination of operators $ I,\lvert S_{1}\rvert^{2},\ldots, \lvert S_{m-1}\cdots S_{1}\rvert^{2} $ with real coefficients. In particular, every coefficient of $ p $ is self-adjoint and for every $ x\in H $, $ \langle p(\,\cdot\,)x,x\rangle \in \mathbb{R}_{m-1}[z] $. 
\end{lemma}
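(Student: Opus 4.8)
The plan is to extract a system of linear equations for the coefficients of $p$ by evaluating \eqref{FormUWSPolynomialAtNaturalNumbers} at the points $n=1,\ldots,m-1$ together with $n=0$, and then invert it using the Vandermonde determinant. Write $p(z)=\sum_{k=0}^{m-1}c_{k}z^{k}$ with $c_{k}\in\mathbf{B}(H)$. The condition $p(0)=I$ gives $c_{0}=I$, and \eqref{FormUWSPolynomialAtNaturalNumbers} evaluated at $n=1,\ldots,m-1$ gives
\begin{equation*}
    \sum_{k=0}^{m-1} n^{k} c_{k} = \lvert S_{n}\cdots S_{1}\rvert^{2}, \qquad n=1,\ldots,m-1,
\end{equation*}
so altogether, letting $A_{0}:=I$ and $A_{n}:=\lvert S_{n}\cdots S_{1}\rvert^{2}$ for $1\le n\le m-1$, we have the $m\times m$ system $\sum_{k=0}^{m-1} n^{k} c_{k} = A_{n}$ for $n=0,1,\ldots,m-1$. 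The coefficient matrix $V=[n^{k}]_{0\le n,k\le m-1}$ is the Vandermonde matrix of the distinct nodes $0,1,\ldots,m-1$, hence invertible with \emph{rational} (in fact real) entries in $V^{-1}$. Therefore each $c_{k}$ is equal to $\sum_{n=0}^{m-1}(V^{-1})_{k,n}A_{n}$, a real linear combination of $I,\lvert S_{1}\rvert^{2},\ldots,\lvert S_{m-1}\cdots S_{1}\rvert^{2}$, which is the first assertion.

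From this, the remaining claims are immediate. Each $A_{n}=\lvert S_{n}\cdots S_{1}\rvert^{2}=(S_{n}\cdots S_{1})^{\ast}(S_{n}\cdots S_{1})$ is self-adjoint (and positive), and $I$ is self-adjoint; a real linear combination of self-adjoint operators is self-adjoint, so every $c_{k}$ is self-adjoint. Consequently, for fixed $x\in H$, the function $z\mapsto\langle p(z)x,x\rangle=\sum_{k=0}^{m-1}\langle c_{k}x,x\rangle z^{k}$ has coefficients $\langle c_{k}x,x\rangle\in\mathbb{R}$ (since $c_{k}=c_{k}^{\ast}$ forces $\langle c_{k}x,x\rangle\in\mathbb{R}$), so $\langle p(\,\cdot\,)x,x\rangle\in\mathbb{R}_{m-1}[z]$.

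The only point requiring any care is the \emph{uniqueness} implicit in reducing \eqref{FormUWSPolynomialAtNaturalNumbers} to finitely many equations: a priori $p$ is only assumed to satisfy \eqref{FormUWSPolynomialAtNaturalNumbers} at \emph{all} positive integers, so one should note that its values at $n=0,1,\ldots,m-1$ already determine $p$ uniquely (a polynomial of degree $\le m-1$ is determined by its values at $m$ distinct points, which is exactly the invertibility of $V$), and hence the $c_{k}$ produced by solving the $m\times m$ system are genuinely the coefficients of $p$. I do not expect a real obstacle here; the argument is essentially Lagrange interpolation with operator data, and the main thing to state cleanly is that the interpolation coefficients $(V^{-1})_{k,n}$ are real numbers independent of the operators involved.
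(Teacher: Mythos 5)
Your proof is correct and follows essentially the same route as the paper: both reduce \eqref{FormUWSPolynomialAtNaturalNumbers} to the $m$ interpolation nodes $0,1,\ldots,m-1$ and invert the resulting Vandermonde system, whose inverse has real (rational) entries independent of the operators. The only cosmetic difference is that you solve the system directly at the operator level, whereas the paper first passes to the scalar quadratic forms $\langle p(z)x,x\rangle$, applies Cramer's rule there, and then recovers the operator identity from equality of the forms for all $x$; your shortcut is perfectly valid and, if anything, slightly cleaner.
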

\begin{proof}
    Write $ p(z) = A_{m-1}z^{m-1}+\ldots+A_{1}z+I $, where $ A_{m-1},\ldots, A_{1}\in \mathbf{B}(H) $. Then for every $ x\in H $,
    \begin{equation*}
        q_{x}(z) := \langle p(z)x,x\rangle = \sum_{j=0}^{m-1}\langle A_{j}x,x\rangle z^{j} \in \mathbb{C}_{m-1}[z].
    \end{equation*}
    By the assumption, the coefficients of $ q_{x} $ satisfy the following system of linear equations\footnote{We stick to the convention that $ 0^{0} = 1 $}:
    \begin{align*}
        \begin{bmatrix}
            0^{m-1} & \ldots & 0^{0}\\
            1^{m-1} &  \ldots & 1^{0}\\
            \vdots & \vdots & \vdots\\
            (m-1)^{m-1} & \ldots & (m-1)^{0}
        \end{bmatrix} \begin{bmatrix}
            \langle A_{m-1}x,x\rangle\\
            \langle A_{m-2}x,x\rangle\\
            \vdots\\
            \langle x,x\rangle
        \end{bmatrix} = \begin{bmatrix}
            \langle x,x\rangle\\
            \langle \lvert S_{1}\rvert^{2}x,x\rangle\\
            \vdots\\
            \langle \lvert S_{m-1}\cdots S_{1}\rvert^{2}x,x\rangle
        \end{bmatrix}.
    \end{align*}
    By the Cramer rule (see \cite[Theorem 3.36.(7)]{nairLinearAlgebra2018}), the only solution of the above system is of the form
    \begin{equation*}
        \langle A_{j}x,x\rangle = M_{j,0}\langle x,x\rangle + \sum_{\ell=1}^{m-1} M_{j,\ell}\langle \lvert S_{\ell}\cdots S_{1}\rvert^{2}x,x\rangle, \quad j=0,1,\ldots,m-1,
    \end{equation*}
    where the constants $ M_{j,\ell}\in \mathbb{R} $ depend only on $ j $ and $ \ell $ (not on $ x\in H $). Thus,
    \begin{equation*}
        A_{j} = M_{j,0}I+ \sum_{\ell=1}^{m-1}M_{j,\ell}\lvert S_{\ell}\cdots S_{1}\rvert^{2}, \quad j=0,\ldots,m-1.
    \end{equation*}
    This completes the proof.
\end{proof}
\begin{proof}[Proof of Theorem \ref{ThmCharOfMIsometricUWSByOperatorPolynomial}]
    (i)$ \iff $(ii). We infer from Observation \ref{ObsNthPowerOfShift} that the matrix of $ \beta_{m}(S) $ is diagonal and that the entries on the diagonal of this matrix are given by the formula
    \begin{equation*}
        (\beta_{m}(S))_{j,j} = \sum_{k=0}^{m}(-1)^{m-k}\binom{m}{k} \lvert S_{j+k}\cdots S_{j+1}\rvert^{2}, \quad j\in \mathbb{N}. 
    \end{equation*}
    Therefore, $ \beta_{m}(S) = 0 $ if and only if $ (\beta_{m}(S))_{j,j} = 0 $ for every $ j\in \mathbb{N} $. In turn, for $ x\in H $, $ x\not=0 $,
    \begin{align*}
        \langle (\beta_{m}(S))_{j,j}x,x\rangle &= \sum_{k=0}^{m}(-1)^{m-k}\binom{m}{k}\lVert S_{j+k}\cdots S_{j+1}x\rVert^{2}\\
        &=\lVert S_{j}\cdots S_{1}x\rVert^{2}\sum_{k=0}^{m}(-1)^{m-k}\binom{m}{k} \alpha_{j+k}(x)^{2}\cdots \alpha_{0}(x)^{2},
    \end{align*}
    where $ \alpha_{0}(x) = 1 $ for all $ x\in H $. In view of \cite[Proposition 3.2]{bermudezWeightedShiftsWhich2010}, $ (\beta_{m}(S))_{j,j} = 0 $ for every $ j\in \mathbb{N} $ if and only if $ S_{x} $ is $ m $-isometric for every $ x\not=0 $. 

    (i)$ \Longrightarrow $(iii). By \cite[Eq.(1.3)]{aglerMisometricTransformationsHilbert1995}, for every $ k\in \mathbb{N} $ we have
    \begin{equation}
        S^{\ast k}S^{k} = \sum_{n=0}^{m-1}\frac{1}{n!}\beta_{n}(S)k(k-1)\cdots(k-n+1).
    \end{equation}
    It follows from Observation \ref{ObsNthPowerOfShift} that
    \begin{equation*}
        \lvert S_{k}\cdots S_{1}\rvert^{2} = (S^{\ast k}S^{k})_{0,0} = \sum_{n=0}^{m-1}\frac{1}{n!}\beta_{n}(S)_{0,0}k(k-1)\cdots(k-n+1).
    \end{equation*}
    Writing $ k(k-1)\cdots(k-n+1) = \sum_{\ell=0}^{n}C_{n,\ell}k^{\ell} $ ($ n\in \mathbb{N} $) we derive from the above equality that
    \begin{equation*}
        \lvert S_{k}\cdots S_{1}\rvert^{2} = \sum_{n=0}^{m-1} \frac{1}{n!}\beta_{n}(S)_{0,0}\sum_{\ell=0}^{n}C_{n,\ell}k^{\ell}, \quad k\in \mathbb{N}.
    \end{equation*}
    Hence, the polynomial
    \begin{equation*}
        p(z) = \sum_{n=0}^{m-1} \frac{1}{n!}\beta_{n}(S)_{0,0}\sum_{\ell=0}^{n}C_{n,\ell}z^{\ell} \in \mathbf{B}(H)_{m-1}[z]
    \end{equation*}
    satisfies (iii).\\
    (iii)$ \Longrightarrow $(ii). From (iii) we deduce that for every $ x\in H $ and every $ n\in \mathbb{Z}_{1} $,
    \begin{equation}
        \lVert S_{n}\cdots S_{1}x\rVert^{2} = \langle \lvert S_{n}\cdots S_{1}\rvert^{2}x,x\rangle = \langle p(n)x,x\rangle.
    \end{equation}
    For $ x\in H $ set $ q_{x}(z) = \langle p(z)x,x\rangle $. Then, by Lemma \ref{LemOperatorPolynomialFromCharOfMIsometricUWS}, $ q\in \mathbb{R}_{m-1}[x] $ and for $ x\not=0 $ we have $ q(1) = \lVert S_{1}x\rVert^{2} $ and
    \begin{equation}
        \frac{\lVert S_{n+1}\cdots S_{1}x\rVert^{2}}{\lVert S_{n}\cdots S_{1}x\rVert^{2}} = \frac{q(n+1)}{q(n)}, \quad n\in \mathbb{Z}_{1}.
    \end{equation}
    Therefore, by \cite[Theorem 1]{abdullahStructureMisometricWeighted2016}, the classical weighted shift $ S_{x} $ is $ m $-isometric for every $ x\not=0 $. \\
    Now let us prove the ''moreover'' part. Since for every $ n\in \mathbb{Z}_{1} $, $ S_{n}\cdots S_{1} $ is invertible, we have $ \mathcal{R}(S_{n}\cdots S_{1}) = H $ and
    \begin{align*}
        \lVert S_{n+1}\rVert^{2} &= \sup\left\{ \frac{\lVert S_{n+1}h\rVert^{2}}{\lVert h\rVert^{2}}\colon h\in H,h\not=0 \right\} \\
        &= \sup\left\{ \frac{\lVert S_{n+1}\cdots S_{1}h\rVert^{2}}{\lVert S_{n}\cdots S_{1}h\rVert^{2}}\colon h\in H,h\not=0 \right\}\\
        &= \sup\left\{ \frac{\lVert \lvert S_{n+1}\cdots S_{1}\rvert h\rVert^{2}}{\lVert \lvert S_{n}\cdots S_{1}\rvert h\rVert^{2}}\colon h\in H,h\not=0 \right\}\\
        &= \sup\left\{ \frac{\langle \lvert S_{n+1}\cdots S_{1}\rvert^{2} h,h\rangle}{\langle \lvert S_{n}\cdots S_{1}\rvert^{2} h,h\rangle}\colon h\in H,h\not=0 \right\}\\
        &= \sup\left\{ \frac{\langle p(n+1) h,h\rangle}{\langle p(n) h,h\rangle}\colon h\in H,h\not=0 \right\}.
    \end{align*}
    Using the fact that $ p(0) = I $ and proceeding similarly as above, we also have
    \begin{equation*}
        \lVert S_{1}\rVert^{2} = \sup\left\{ \frac{\langle p(1)h,h\rangle}{\langle p(0)h,h\rangle}\colon h\in H,h\not=0 \right\}.
    \end{equation*}
    Therefore, \eqref{FormNormOfMIsometricUWS} follows from the above and \eqref{FormNormOfUWS}.
\end{proof}
In \cite[Theorem 1]{abdullahStructureMisometricWeighted2016} the authors actually showed that for a classical unilateral weighted shifts $ S\in \mathbf{B}(\ell^{2}(\mathbb{N}, \mathbb{C})) $ with scalar weights $ (\lambda_{n})_{n\in \mathbb{Z}_{1}}\subset \mathbb{C}\setminus\{0\} $, $ m $-isometricity of $ S $ is equivalent to the existence of a polynomial $ p\in \mathbb{R}_{m-1}[x] $ satisfying the following formula:
\begin{equation*}
    \lvert \lambda_{n}\rvert^{2} = \frac{p(n+1)}{p(n)}, \quad n\in \mathbb{Z}_{1}.
\end{equation*}
As a corollary from Theorem \ref{ThmCharOfMIsometricUWSByOperatorPolynomial}, we obtain the counterpart of the above formula for shifts with operator weights.
\begin{corollary}
    \label{CorFormulaForWeightsUWSDoublyCommutingWeights}
    Let $ H $ and $ S $ be as in Theorem \ref{ThmCharOfMIsometricUWSByOperatorPolynomial}.
    \begin{enumerate}
        \item If $ S $ is $ m $-isometric and $ p\in \mathbf{B}(H)_{m-1}[z] $ is a polynomial as in Theorem \ref{ThmCharOfMIsometricUWSByOperatorPolynomial}(iii), then for every $ n\in \mathbb{Z}_{1} $, $ \lvert S_{n}\rvert^{2} $ is unitarily equivalent to $ p(n-1)^{-1/2}p(n)p(n-1)^{-1/2} $.
        \item Assume that the weights of $ S $ are pairwise doubly commuting. Then $ S $ is $ m $-isometric if and only if there exists a polynomial $ p\in \mathbf{B}(H)_{m-1}[z] $ such that $ p(0) = I $ and
        \begin{equation}
            \label{FormUWSPolynomialAtNaturalNumbersDoublyCommutingWeights}
            \lvert S_{n}\rvert^{2} = p(n)p(n-1)^{-1} = p(n-1)^{-1}p(n), \quad n\in \mathbb{Z}_{1}.
        \end{equation}
    \end{enumerate}
\end{corollary}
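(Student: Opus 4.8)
The plan is to read everything off the factorization $\lvert S_n\cdots S_1\rvert^2 = T_{n-1}^{\ast}\lvert S_n\rvert^2 T_{n-1}$, where $T_k := S_k\cdots S_1$ for $k\in\mathbb{Z}_1$ and $T_0 := I$ (each $T_k$ is invertible since the weights are), combined with Theorem~\ref{ThmCharOfMIsometricUWSByOperatorPolynomial}(iii), which lets me replace $\lvert T_n\rvert^2$ by $p(n)$ for $n\in\mathbb{Z}_1$ and $\lvert T_0\rvert^2$ by $p(0)=I$. Thus the two identities I would start from are $p(n)=T_{n-1}^{\ast}\lvert S_n\rvert^2 T_{n-1}$ and $p(n-1)=T_{n-1}^{\ast}T_{n-1}=\lvert T_{n-1}\rvert^2$, valid for every $n\in\mathbb{Z}_1$.

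For part (i): assuming $S$ is $m$-isometric, I take $p$ as in Theorem~\ref{ThmCharOfMIsometricUWSByOperatorPolynomial}(iii) and use the polar decomposition $T_{n-1}=U_{n-1}\lvert T_{n-1}\rvert$ with $U_{n-1}$ \emph{unitary} (here invertibility of $T_{n-1}$ is exactly what is needed) and $\lvert T_{n-1}\rvert=p(n-1)^{1/2}$. Substituting into $p(n)=T_{n-1}^{\ast}\lvert S_n\rvert^2 T_{n-1}$ gives $p(n)=p(n-1)^{1/2}\,U_{n-1}^{\ast}\lvert S_n\rvert^2 U_{n-1}\,p(n-1)^{1/2}$, and multiplying on both sides by $p(n-1)^{-1/2}$ yields $p(n-1)^{-1/2}p(n)p(n-1)^{-1/2}=U_{n-1}^{\ast}\lvert S_n\rvert^2 U_{n-1}$, which is the asserted unitary equivalence.

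For part (ii): I would first record that double commutativity of the weights forces $\lvert S_n\rvert^2=S_n^{\ast}S_n$ to commute with each $S_i$ and each $S_i^{\ast}$ for $i<n$, hence with $T_{n-1}$ and with $T_{n-1}^{\ast}$ (and incidentally that all the $\lvert S_j\rvert^2$ pairwise commute). For the forward implication, starting again from $p(n)=T_{n-1}^{\ast}\lvert S_n\rvert^2 T_{n-1}$ and sliding $\lvert S_n\rvert^2$ past $T_{n-1}$ and past $T_{n-1}^{\ast}$ gives $p(n)=\lvert S_n\rvert^2 p(n-1)=p(n-1)\lvert S_n\rvert^2$; since $p(n-1)=\lvert T_{n-1}\rvert^2$ is invertible this rearranges to $\lvert S_n\rvert^2=p(n)p(n-1)^{-1}=p(n-1)^{-1}p(n)$. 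For the converse, given such a $p$ I would show $p(n)=\lvert T_n\rvert^2$ for all $n\in\mathbb{N}$ by induction: $p(0)=I=\lvert T_0\rvert^2$, and if $p(n-1)=\lvert T_{n-1}\rvert^2$ (which is invertible, being $\lvert T_{n-1}\rvert^2$ with $T_{n-1}$ invertible) then $\lvert T_n\rvert^2=T_{n-1}^{\ast}\lvert S_n\rvert^2 T_{n-1}=\lvert S_n\rvert^2 p(n-1)=p(n)p(n-1)^{-1}p(n-1)=p(n)$; then the equivalence (i)$\Leftrightarrow$(iii) of Theorem~\ref{ThmCharOfMIsometricUWSByOperatorPolynomial} shows $S$ is $m$-isometric.

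I do not expect a genuine obstacle here; the points that need care are that part (ii) really uses \emph{double} (not mere) commutativity — this is precisely what turns the sandwich $T_{n-1}^{\ast}\lvert S_n\rvert^2 T_{n-1}$ into the plain products $\lvert S_n\rvert^2 p(n-1)$ and $p(n-1)\lvert S_n\rvert^2$ — and that in the converse one must first upgrade the two scalar-type ratio identities for $\lvert S_n\rvert^2$ to the operator identity $p(n)=\lvert T_n\rvert^2$ before Theorem~\ref{ThmCharOfMIsometricUWSByOperatorPolynomial} can be applied; the remaining manipulations are routine.
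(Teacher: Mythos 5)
Your proposal is correct and follows essentially the same route as the paper: part (i) is the identical polar-decomposition sandwich $p(n)=p(n-1)^{1/2}U_{n-1}^{\ast}\lvert S_{n}\rvert^{2}U_{n-1}p(n-1)^{1/2}$, and part (ii) rests on the same use of double commutativity to slide $\lvert S_{n}\rvert^{2}$ past $S_{n-1}\cdots S_{1}$ before invoking Theorem \ref{ThmCharOfMIsometricUWSByOperatorPolynomial}(i)$\iff$(iii). The only cosmetic difference is that the paper quotes a known lemma giving $\lvert S_{n}\cdots S_{1}\rvert^{2}=\lvert S_{n}\rvert^{2}\cdots\lvert S_{1}\rvert^{2}$ for doubly commuting weights, whereas you verify the required commutation relations directly.
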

\begin{proof}
    (i). For $ n\in \mathbb{Z}_{1} $ let $ S_{n}\cdots S_{1} = U_{n}\lvert S_{n}\cdots S_{1}\rvert $ be the polar decomposition of $ S_{n}\cdots S_{1} $. Since $ S_{n}\cdots S_{1} $ is invertible, $ U_{n} $ is unitary for every $ n\in \mathbb{Z}_{1} $. For $ n\in \mathbb{Z}_{2} $ we have
    \begin{align*}
        p(n)=\lvert S_{n}\cdots S_{1}\rvert^{2} &= (S_{n-1}\cdots S_{1})^{\ast}S_{n}^{\ast}S_{n}S_{n-1}\cdots S_{1}\\
        &= \lvert S_{n-1}\cdots S_{1}\rvert U_{n-1}^{\ast}\lvert S_{n}\rvert^{2}U_{n-1}\lvert S_{n-1}\cdots S_{1}\rvert\\
        &=p(n-1)^{1/2}U_{n-1}^{\ast}\lvert S_{n}\rvert^{2}U_{n-1}p(n-1)^{1/2}.
    \end{align*}
    This implies that
    \begin{equation*}
        U_{n-1}^{\ast}\lvert S_{n}\rvert^{2} = p(n-1)^{-1/2}p(n)p(n-1)^{-1/2}U_{n-1}^{\ast}, \quad n\in \mathbb{Z}_{2}.
    \end{equation*}
    Since $ p(0) = I $, we have
    \begin{equation*}
        \lvert S_{1}\rvert^{2} = p(1) = p(0)^{-1/2}p(1)p(0)^{-1/2}.
    \end{equation*}
    Therefore, $ \lvert S_{n}\rvert^{2} $ is unitarily equivalent to $ p(n-1)^{-1/2}p(n)p(n-1)^{-1/2} $ for every $ n\in \mathbb{Z}_{1} $.\\
    (ii). Since the weights of $ S $ are doubly commuting, we infer from \cite[Lemma 2.1]{anandCompleteSystemsOfUnitaryInvariants} that
    \begin{equation*}
        \lvert S_{n}\cdots S_{1}\rvert^{2} = \lvert S_{n}\rvert^{2}\cdots \lvert S_{1}\rvert^{2}, \quad n\in \mathbb{Z}_{1}.
    \end{equation*}
    Using the above equality, we can easily show that a polynomial $ p\in \mathbf{B}(H)_{m-1}[z] $ satisfies \eqref{FormUWSPolynomialAtNaturalNumbers} if and only if it satisfies \eqref{FormUWSPolynomialAtNaturalNumbersDoublyCommutingWeights}.
\end{proof}
Theorem \ref{ThmCharOfMIsometricUWSByOperatorPolynomial} enables us to give a procedure of constructing $ m $-isometric unilateral weighted shifts with positive and invertible weights; since, by \cite[Theorem 3.4]{lambertUnitaryEquivalence1971}, every unilateral weighted shift with invertible weights is unitarily equivalent to the shift with positive and invertible weights, the assumption of positivity of weights is not restricting.
\begin{theorem}
    \label{ThmConstructingMIsometricUWS}
    Let $ H $ be a Hilbert space. If $ p\in \mathbf{B}(H)_{m-1}[z] $ is a polynomial such that
    \begin{enumerate}
        \item $ p(0) = I $,
        \item $ p(n) $ is positive and invertible for every $ n\in \mathbb{N} $,
        \item $ C(p):= \sup\left\{ \frac{\langle p(n+1)h,h\rangle}{\langle p(n)h,h\rangle}\colon n\in \mathbb{N}, h\in H, h\not=0 \right\}<\infty $,
    \end{enumerate}
    then there exists an $ m $-isometric unilateral weighted shift $ S\in \mathbf{B}(\ell^{2}(\mathbb{N},H)) $ with positive and invertible weights $ (S_{j})_{j\in \mathbb{Z}_{1}}\subset \mathbf{B}(H) $ satisfying \eqref{FormUWSPolynomialAtNaturalNumbers}. Moreover, $ \lVert S\rVert = \sqrt{C(p)} $.
\end{theorem}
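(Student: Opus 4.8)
The plan is to write down the weights by a telescoping ansatz, check $m$-isometricity via Theorem~\ref{ThmCharOfMIsometricUWSByOperatorPolynomial}, and then correct the positivity of the weights by a diagonal unitary conjugation that leaves the relevant products untouched.

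I would first set $S_n := p(n)^{1/2}p(n-1)^{-1/2}$ for $n\in\mathbb{Z}_1$; each $S_n$ is invertible since $p(n)$ and $p(n-1)$ are positive and invertible. For uniform boundedness I would note $\|S_n\|^2 = \|S_n^\ast S_n\| = \|p(n-1)^{-1/2}p(n)p(n-1)^{-1/2}\|$ and, because this last operator is positive, compute its norm as a supremum of quadratic forms; the substitution $g = p(n-1)^{-1/2}h$ turns it into $\sup_{g\neq 0}\langle p(n)g,g\rangle/\langle p(n-1)g,g\rangle$, so $\sup_{n}\|S_n\|^2 = C(p)<\infty$ and, by \eqref{FormNormOfUWS}, the weighted shift $S$ with weights $(S_n)_{n\in\mathbb{Z}_1}$ is bounded with $\|S\| = \sqrt{C(p)}$. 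The product telescopes: since $p(0) = I$, one has $S_n\cdots S_1 = p(n)^{1/2}$, hence $\lvert S_n\cdots S_1\rvert^2 = p(n)$ for every $n\in\mathbb{Z}_1$. As $p\in\mathbf{B}(H)_{m-1}[z]$ and $p(0) = I$, the implication (iii)$\Rightarrow$(i) of Theorem~\ref{ThmCharOfMIsometricUWSByOperatorPolynomial} gives that $S$ is $m$-isometric (and its ``moreover'' part re-confirms $\|S\|^2 = C(p)$).

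The weights $S_n$ need not be positive, since a product of two positive operators rarely is. To remedy this I would conjugate $S$ by a diagonal unitary. Put $U_0 := I$ and, recursively, let $S_nU_{n-1}^\ast = W_nR_n$ be the polar decomposition (with $W_n$ unitary because $S_nU_{n-1}^\ast$ is invertible) and set $U_n := W_n^\ast$, so that $\tilde S_n := U_nS_nU_{n-1}^\ast = R_n$ is positive and invertible. With $U := \mathrm{diag}(U_0,U_1,\ldots)$, a unitary on $\ell^2(\mathbb{N},H)$, the operator $\tilde S := USU^\ast$ is the weighted shift with weights $(\tilde S_n)_{n\in\mathbb{Z}_1}$, and since $U_0 = I$ its cumulative products satisfy $\tilde S_n\cdots\tilde S_1 = U_n(S_n\cdots S_1)$; therefore $\lvert\tilde S_n\cdots\tilde S_1\rvert^2 = (S_n\cdots S_1)^\ast U_n^\ast U_n(S_n\cdots S_1) = p(n)$. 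Thus $\tilde S$ has positive and invertible weights, satisfies \eqref{FormUWSPolynomialAtNaturalNumbers}, is $m$-isometric (being unitarily equivalent to $S$, or again by Theorem~\ref{ThmCharOfMIsometricUWSByOperatorPolynomial}), and $\|\tilde S\| = \|S\| = \sqrt{C(p)}$, which is the assertion.

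I expect the only real obstacle to be the positivity correction: one must ensure that all the weights can be made positive at once without spoiling the identity $\lvert S_n\cdots S_1\rvert^2 = p(n)$. The observation that dissolves the difficulty is that a diagonal unitary with identity in the zeroth slot changes $S_n\cdots S_1$ only by a left unitary factor, which does not affect its modulus; everything else — the telescoping, the norm estimate, and the appeal to Theorem~\ref{ThmCharOfMIsometricUWSByOperatorPolynomial} — is routine.
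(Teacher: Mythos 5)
Your proposal is correct, and it reaches the conclusion by a construction that differs from the paper's in how positivity of the weights is secured. The paper defines the weights recursively so that they are positive from the outset: $S_{1}=p(1)^{1/2}$ and $S_{k+1}=\bigl(S_{1}^{-1}\cdots S_{k}^{-1}p(k+1)S_{k}^{-1}\cdots S_{1}^{-1}\bigr)^{1/2}$, then checks $\langle\lvert S_{n}\cdots S_{1}\rvert^{2}h,h\rangle=\langle p(n)h,h\rangle$ and the bound $\lVert S_{n}\rVert^{2}\le C(p)$ before invoking Theorem~\ref{ThmCharOfMIsometricUWSByOperatorPolynomial}. You instead write the weights in closed form, $S_{n}=p(n)^{1/2}p(n-1)^{-1/2}$, so that the products telescope to $S_{n}\cdots S_{1}=p(n)^{1/2}$ and \eqref{FormUWSPolynomialAtNaturalNumbers} is immediate, and the identity $\lVert S_{n}\rVert^{2}=\sup_{h\neq0}\langle p(n)h,h\rangle/\langle p(n-1)h,h\rangle$ gives $\lVert S\rVert=\sqrt{C(p)}$ directly from \eqref{FormNormOfUWS}; since these weights are generally not positive, you repair this by conjugating with the diagonal unitary $U=\mathrm{diag}(U_{0},U_{1},\ldots)$, $U_{0}=I$, built from polar decompositions, and you correctly observe that such a conjugation changes $S_{n}\cdots S_{1}$ only by a left unitary factor, hence leaves $\lvert S_{n}\cdots S_{1}\rvert^{2}=p(n)$ intact. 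All steps check out: $S_{n}U_{n-1}^{\ast}$ is invertible, so its polar part is unitary and $\widetilde S_{n}$ is positive and invertible, and $USU^{\ast}$ is indeed the shift with weights $(\widetilde S_{n})$. The trade-off is that your route gets the modulus identity and the norm formula with less computation, at the cost of the extra gauge-correction step, which the paper's inductive definition renders unnecessary (it produces positive weights in one pass); both arguments ultimately rest on the same appeal to Theorem~\ref{ThmCharOfMIsometricUWSByOperatorPolynomial}(iii)$\Rightarrow$(i).
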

\begin{proof}
    We define the sequence $ (S_{j})_{j\in \mathbb{Z}_{1}} $ by induction. Set $ S_{1}:= p(1)^{1/2} $. Then $ S_{1} $ is positive and invertible and such that $ \langle \lvert S_{1}\rvert^{2}h,h\rangle = \langle p(1)h,h\rangle $ for every $ h\in H $. Assume now that for some $ k\in \mathbb{Z}_{1} $ we have defined the weights $ S_{1},\ldots, S_{k}\in \mathbf{B}(H) $ in a way that they are invertible and positive operators satisfying \eqref{FormUWSPolynomialAtNaturalNumbers} for $ n=1,\ldots,k $. Then
    \begin{equation*}
        \widetilde{S}_{k+1} := S_{1}^{-1}\cdots S_{k}^{-1}p(k+1)S_{k}^{-1}\cdots S_{1}^{-1}
    \end{equation*}
    is a positive and invertible operator and so is $ S_{k+1}:= \widetilde{S}_{k+1}^{1/2} $. We have
    \begin{equation*}
        \langle \lvert S_{k+1}\cdots S_{1}\rvert^{2}h,h\rangle = \langle S_{1}\cdots S_{k}\cdot \widetilde{S}_{k+1}\cdot S_{k}\cdots S_{1}h,h\rangle = \langle p(k+1)h,h\rangle, \quad h\in H,
    \end{equation*}
    so \eqref{FormUWSPolynomialAtNaturalNumbers} holds for $ n=k+1 $. It remains to show that the sequence $ (S_{j})_{j\in \mathbb{Z}_{1}}\subset \mathbf{B}(H) $ defined above is uniformly bounded. By (i) and (iii) we have
    \begin{equation*}
        \lVert S_{1}h\rVert^{2} = \langle S_{1}^{2}h,h\rangle = \langle p(1)h,h\rangle \le C(p)\cdot \langle p(0)h,h\rangle = C(p)\cdot \lVert h\rVert^{2}, \quad h\in H,
    \end{equation*}
    and
    \begin{align*}
        \lVert S_{n+1}\cdots S_{1}h\rVert^{2} &= \langle \lvert S_{n+1}\cdots S_{1}\rvert^{2}h,h\rangle = \langle p(n+1)h,h\rangle \\
        &\le C(p)\cdot \langle p(n)h,h\rangle = C(p)\cdot \lVert S_{n}\cdots S_{1}h\rVert^{2}, \quad n\in \mathbb{Z}_{1}, h\in H.
    \end{align*}
    Since $ \mathcal{R}(S_{n}\cdots S_{1}) = H $ for every $ n\in \mathbb{Z}_{1} $, it follows that $ \lVert S_{n}\rVert^{2} \le C(p) $ for every $ n\in \mathbb{Z}_{1} $. We infer from Theorem \ref{ThmCharOfMIsometricUWSByOperatorPolynomial} that the weighted shift $ S\in \mathbf{B}(\ell^{2}(\mathbb{N},H)) $ with weights $ (S_{j})_{j\in \mathbb{Z}_{1}} $ is $ m $-isometric. The formula $ \lVert S\rVert = \sqrt{C(p)} $ follows from \eqref{FormNormOfMIsometricUWS}.
\end{proof}
The following simple observation gives us another way of checking the condition Theorem~\ref{ThmConstructingMIsometricUWS}(iii).
\begin{observation}
    \label{ObsSupremumBySpectralRadius}
    Let $ H $ be a Hilbert space. If $ p\in \mathbf{B}(H)_{m-1}[z] $ is a polynomial satisfying Theorem \ref{ThmConstructingMIsometricUWS}(i)--(ii), then
    \begin{equation}
        \label{FormSupremumBySpectralRadius}
        C(p) = \sup\left\{ r(p(n+1)p(n)^{-1})\colon n\in \mathbb{N}\right\}.
    \end{equation}
\end{observation}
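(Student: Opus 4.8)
The plan is to reduce the claimed identity, separately for each fixed $n\in\mathbb{N}$, to the classical description of the supremum of a ``generalized Rayleigh quotient'' of two positive invertible operators, and then simply take the supremum over $n$.

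First I would fix $n\in\mathbb{N}$ and abbreviate $A:=p(n+1)$ and $B:=p(n)$; by hypothesis (ii) both are positive and invertible, so $B^{1/2}$ is a positive invertible operator and the map $g\mapsto B^{-1/2}g$ is a bijection of $H\setminus\{0\}$ onto itself. Substituting $h=B^{-1/2}g$ turns $\langle Bh,h\rangle$ into $\|g\|^{2}$ and $\langle Ah,h\rangle$ into $\langle B^{-1/2}AB^{-1/2}g,g\rangle$, hence
\[
\sup_{h\neq 0}\frac{\langle p(n+1)h,h\rangle}{\langle p(n)h,h\rangle}=\sup_{g\neq 0}\frac{\langle B^{-1/2}AB^{-1/2}g,g\rangle}{\|g\|^{2}}=\bigl\|B^{-1/2}AB^{-1/2}\bigr\|,
\]
the last equality being the standard formula for the norm of a positive operator (its numerical radius equals its norm).

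Next I would identify this norm with the required spectral radius. Since $B^{-1/2}AB^{-1/2}=(B^{-1/2})^{\ast}AB^{-1/2}\ge 0$, its norm equals its spectral radius; moreover $B^{-1/2}AB^{-1/2}=B^{1/2}\bigl(B^{-1}A\bigr)B^{-1/2}$ is similar to $B^{-1}A=p(n)^{-1}p(n+1)$, and by the general fact $\sigma(XY)\cup\{0\}=\sigma(YX)\cup\{0\}$ the operators $p(n)^{-1}p(n+1)$ and $p(n+1)p(n)^{-1}$ have the same spectral radius. Combining these, $\bigl\|B^{-1/2}AB^{-1/2}\bigr\|=r\bigl(B^{-1/2}AB^{-1/2}\bigr)=r\bigl(p(n+1)p(n)^{-1}\bigr)$.

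Finally, since $C(p)$ is by definition the supremum over $n\in\mathbb{N}$ of the left-hand quantities just computed, taking $\sup_{n\in\mathbb{N}}$ of both sides of the resulting per-$n$ equalities yields \eqref{FormSupremumBySpectralRadius}. I do not anticipate any genuine obstacle: the whole content is the change of variables $h=B^{-1/2}g$ together with two elementary facts (a positive operator has norm equal to its spectral radius, and similarity as well as the $XY\leftrightarrow YX$ trick preserve the spectral radius). Two minor points worth flagging in the write-up: hypothesis (i) is not actually needed, only (ii); and the identity should be read in $[0,+\infty]$, so no a priori finiteness of $C(p)$ has to be assumed.
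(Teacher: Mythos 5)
Your argument is correct and follows essentially the same route as the paper: the substitution $h=p(n)^{-1/2}g$ reduces the Rayleigh-quotient supremum to $\lVert p(n)^{-1/2}p(n+1)p(n)^{-1/2}\rVert$, which equals its spectral radius by positivity, and this in turn equals $r\bigl(p(n+1)p(n)^{-1}\bigr)$. The only (cosmetic) difference is in the last identification, where you invoke similarity together with $\sigma(XY)\cup\{0\}=\sigma(YX)\cup\{0\}$, while the paper argues directly via equivalence of invertibility of $p(n)^{-1/2}p(n+1)p(n)^{-1/2}-\lambda I$, $p(n+1)-\lambda p(n)$ and $p(n+1)p(n)^{-1}-\lambda I$.
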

\begin{proof}
    By the fact that for every $ n\in \mathbb{N} $, $ p(n) $ is positive and invertible, we have
    \begin{align*}
        &\sup\left\{ \frac{\langle p(n+1)h,h\rangle}{\langle p(n)h,h\rangle}\colon n\in \mathbb{N}, h\in H,h\not=0 \right\} \\
        &= \sup\left\{ \frac{\langle p(n+1)p(n)^{-1/2}h,p(n)^{-1/2}h\rangle}{\langle p(n)p(n)^{-1/2}h,p(n)^{-1/2}h\rangle}\colon n\in \mathbb{N}, h\in H,h\not=0 \right\}\\
        &=\sup\left\{ \frac{\langle p(n)^{-1/2}p(n+1)p(n)^{-1/2}h,h\rangle}{\langle h,h\rangle}\colon n\in \mathbb{N}, h\in H,h\not=0 \right\}\\
        &= \sup\left\{ \lVert p(n)^{-1/2}p(n+1)p(n)^{-1/2}\rVert\colon n\in \mathbb{N} \right\}.
    \end{align*}
    Since $ p(n)^{-1/2}p(n+1)p(n)^{-1/2} $ is positive for every $ n\in \mathbb{N} $, it follows from \cite[Theorem 5.44]{weidmannLinearOperatorsInHilbertSpace} that
    \begin{equation*}
        \lVert p(n)^{-1/2}p(n+1)p(n)^{-1/2}\rVert = r(p(n)^{-1/2}p(n+1)p(n)^{-1/2}), \quad n\in \mathbb{N}.
    \end{equation*}
    It is easy to see that for $ \lambda\in \mathbb{C} $ and $ n\in \mathbb{N} $, $ p(n)^{-1/2}p(n+1)p(n)^{-1/2} -\lambda I $ is invertible if and only if so is $ p(n+1)-\lambda p(n) $. In turn, $ p(n+1)-\lambda p(n) $ is invertible if and only if so is $ p(n+1)p(n)^{-1}-\lambda I $. Hence,
    \begin{equation*}
        r(p(n)^{-1/2}p(n+1)p(n)^{-1/2}) = r(p(n+1)p(n)^{-1}), \quad n\in \mathbb{N},
    \end{equation*}
    and the proof is complete.
\end{proof}
The following example shows that in general the condition (iii) of Theorem \ref{ThmConstructingMIsometricUWS} cannot be removed even if $ H $ is finite dimensional.
\begin{example}
    Define $ p\in \mathbf{B}(\mathbb{C}^{3})_{3}[z] $ by the formula:
    \begin{equation*}
        p(z) = \begin{bmatrix}
            z^{3}+16z+1 & \frac{1}{2}z^{2} & 2z\\
            \frac{1}{2}z^{2} & \frac{1}{4}z+1 & 0\\
            2z & 0 & 1
        \end{bmatrix}, \quad z\in \mathbb{C}.
    \end{equation*}
    An easy computation shows that for every $ n\in \mathbb{N} $, $ p(n) $ is self-adjoint and all leading principal minors of $ p(n) $ are positive. By the Sylvester criterion (see \cite[Theorem 7.2.5]{hornMatrixAnalysis}), $ p(n) $ is positive for every $ n\in \mathbb{N} $. It follows that $ p(n) $ is invertible. Obviously, $ p(0) = I $. Now observe that (cf. \cite[Eq. (1)]{panEstimatingExtremalEigenvalues} and the proof of Observation \ref{ObsSupremumBySpectralRadius})
    \begin{align}
        \label{ProofFormSpectralRadiusEstimation}
        r(p(n+1)p(n)^{-1}) &= r(p(n)^{-1/2}p(n+1)p(n)^{-1/2}) \\
        \notag&\le \tr(p(n)^{-1/2}p(n+1)p(n)^{-1/2}) \\
        \notag&\le 3r(p(n)^{-1/2}p(n+1)p(n)^{-1/2}),\\
        \notag&=3r(p(n+1)p(n)^{-1}), \quad n\in \mathbb{N}.
    \end{align}
    By the properties of a trace of matrix,
    \begin{equation*}
        \tr(p(n)^{-1/2}p(n+1)p(n)^{-1/2}) = \tr(p(n+1)p(n)^{-1}), \quad n\in \mathbb{N}.
    \end{equation*}
    Simple calculations gives us that
    \begin{equation*}
        \tr(p(n+1)p(n)^{-1}) = \frac{n^{2}+208n+81}{4+65n}, \quad n\in \mathbb{N}.
    \end{equation*}
    Combining the above with \eqref{ProofFormSpectralRadiusEstimation}, we get that
    \begin{equation*}
        r(p(n+1)p(n)^{-1}) \xrightarrow{n\to \infty} \infty.
    \end{equation*}
    By Observation \ref{ObsSupremumBySpectralRadius},
    \begin{equation*}
        \sup\left\{ \frac{\langle p(n+1)h,h\rangle}{\langle p(n)h,h\rangle}\colon n\in \mathbb{N}, h\in H, h\not=0 \right\} = \infty.
    \end{equation*}
\end{example}
However, under further assumption that the coefficients of $ p $ are commuting, the condition (iii) of Theorem \ref{ThmConstructingMIsometricUWS} is automatically satisfied.
\begin{lemma}
    \label{LemSupremumIsFiniteIfCoeffsCommute}
    Let $ H = \mathbb{C}^{N} $ ($ N\in \mathbb{Z}_{1} $) and let $ p\in \mathbf{B}(H)_{m-1}[z] $ be a polynomial satisfying Theorem \ref{ThmConstructingMIsometricUWS}(i)--(ii). If the coefficients of $ p $ are commuting, then the condition (iii) of Theorem \ref{ThmConstructingMIsometricUWS} holds.
\end{lemma}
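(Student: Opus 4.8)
The plan is to first collapse condition (iii) to a purely spectral statement, and then to linearize the problem by putting the commuting coefficients of $p$ into a common triangular form. So the first step is to invoke Observation~\ref{ObsSupremumBySpectralRadius}: since $p$ satisfies Theorem~\ref{ThmConstructingMIsometricUWS}(i)--(ii), we have $C(p)=\sup\{r(p(n+1)p(n)^{-1})\colon n\in\mathbb{N}\}$, and hence it suffices to show that the spectral radii $r(p(n+1)p(n)^{-1})$ are bounded in $n$.

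Next I would write $p(z)=A_{m-1}z^{m-1}+\dots+A_1z+A_0$ with $A_0=p(0)=I$, so that $A_0,A_1,\dots,A_{m-1}$ is a commuting family in $\mathbf{B}(H)$. Such a family over $\mathbb{C}$ is simultaneously unitarily upper-triangularizable (the commuting version of Schur's theorem; see e.g.\ \cite{hornMatrixAnalysis}). Conjugating $p$ by the triangularizing unitary changes neither hypotheses (i)--(ii) nor the spectral radii $r(p(n+1)p(n)^{-1})$, so I may assume every $A_k$ is upper triangular. Then $p(n)$ is upper triangular for each $n$, and its $(j,j)$-entry equals $p^{(j)}(n)$, where $p^{(j)}(z):=\sum_{k=0}^{m-1}(A_k)_{j,j}z^{k}\in\mathbb{C}_{m-1}[z]$ for $j=1,\dots,N$. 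From $A_0=I$ we get $p^{(j)}(0)=1$, so each $p^{(j)}$ is a nonzero polynomial; and since $p(n)$ is invertible, all its diagonal entries are nonzero, i.e.\ $p^{(j)}(n)\neq 0$ for every $n\in\mathbb{N}$ and every $j$.

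Finally, the inverse of an invertible upper triangular matrix is again upper triangular with reciprocal diagonal entries, so $p(n+1)p(n)^{-1}$ is upper triangular with $(j,j)$-entry $p^{(j)}(n+1)/p^{(j)}(n)$, and therefore
\[
 r\bigl(p(n+1)p(n)^{-1}\bigr)=\max_{1\le j\le N}\frac{|p^{(j)}(n+1)|}{|p^{(j)}(n)|}.
\]
For each fixed $j$ the polynomial $p^{(j)}$ is nonzero and never vanishes at a natural number, so $|p^{(j)}(n+1)|/|p^{(j)}(n)|\to 1$ as $n\to\infty$ (both numerator and denominator grow like $|c_j|\,n^{d_j}$, where $c_j,d_j$ are the leading coefficient and degree of $p^{(j)}$, and the ratio is identically $1$ when $d_j=0$); in particular this sequence is bounded. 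Taking the maximum over the finitely many indices $j$ and then the supremum over $n$ gives $\sup_n r(p(n+1)p(n)^{-1})<\infty$, which by Observation~\ref{ObsSupremumBySpectralRadius} is exactly condition~(iii).

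The one genuine subtlety is that commuting matrices need not be simultaneously diagonalizable --- they need not be normal --- so the argument must be carried out with triangular rather than diagonal forms; everything else rests on elementary facts about upper triangular matrices (an upper triangular matrix is invertible iff its diagonal has no zero entry, and products and inverses act entrywise on diagonals). One could equally bypass triangularization by passing to the finite-dimensional commutative unital subalgebra $\mathcal{A}\subseteq\mathbf{B}(H)$ generated by $A_0,\dots,A_{m-1}$: it has finitely many characters $\chi_1,\dots,\chi_s$, each $\chi_i\circ p$ is a scalar polynomial with value $1$ at $0$, $p(n)$ is invertible in $\mathcal{A}$ (its inverse is a polynomial in $p(n)$), so $r(p(n+1)p(n)^{-1})=\max_i|\chi_i(p(n+1))|/|\chi_i(p(n))|$, and one concludes exactly as above.
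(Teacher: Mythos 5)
Your proof is correct, but it takes a slightly different route from the paper's. The paper first observes (via the Cramer/Vandermonde argument of Lemma~\ref{LemOperatorPolynomialFromCharOfMIsometricUWS}) that the coefficients $A_1,\dots,A_{m-1}$ are real linear combinations of the self-adjoint operators $p(0),\dots,p(m-1)$, hence self-adjoint; being commuting and normal they are then \emph{simultaneously unitarily diagonalizable}, so $p(n)$ is diagonal with positive scalar-polynomial entries $p_j(n)$, and the paper bounds $r(p(n)^{-1/2}p(n+1)p(n)^{-1/2})$ by the trace $\sum_j p_j(n+1)/p_j(n)\to N$. You instead use simultaneous unitary \emph{triangularization} of the commuting family, which sidesteps the self-adjointness of the coefficients entirely, and you identify $r(p(n+1)p(n)^{-1})$ exactly as $\max_j\lvert p^{(j)}(n+1)\rvert/\lvert p^{(j)}(n)\rvert\to 1$; all the supporting steps (invariance of hypotheses and of the spectral radius under unitary conjugation, nonvanishing of the diagonal entries from invertibility, the asymptotics of ratios of a nonzero polynomial at consecutive integers) are sound. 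Your approach is more robust --- it only needs $p(0)=I$, invertibility of each $p(n)$, and commutativity, not positivity --- and gives the sharper asymptotic constant $1$ rather than $N$; the paper's is marginally shorter because, in this setting, positivity of the $p(n)$ forces the coefficients to be self-adjoint anyway (indeed, a positive upper triangular matrix is already diagonal, so the ``commuting but not normal'' subtlety you flag is vacuous here, though your argument does not depend on that). Both proofs conclude via Observation~\ref{ObsSupremumBySpectralRadius}.
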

\begin{proof}
    Write $ p(z) = A_{m-1}z^{m-1}+\ldots+A_{1}z+I $. From Observation \ref{LemOperatorPolynomialFromCharOfMIsometricUWS} it follows that $ A_{m-1},\ldots,A_{1} $ are self-adjoint. By \cite[Theorem 2.5.5]{hornMatrixAnalysis} there is no loss in generality in assuming that $ A_{m-1},\ldots,A_{1} $ are diagonal matrices. This implies that $ p $ is of the form
    \begin{equation*}
        p(z) = \begin{bmatrix}
            p_{1}(z) & 0 & \ldots & 0\\
            0 & p_{2}(z) & \ldots & 0\\
            \vdots & \vdots & \vdots & \vdots\\
            0 & 0 & \ldots & p_{N}(z)
        \end{bmatrix}, \quad z\in \mathbb{C},
    \end{equation*}
    where $ p_{j}\in \mathbb{R}_{m-1}[z] $ are such that $ p_{j}(n)> 0 $ ($ n\in \mathbb{N}, j = 1,\ldots,N $). We have
    \begin{equation*}
        p(n)^{-1/2}p(n+1)p(n)^{-1/2} = \begin{bmatrix}
            \frac{p_{1}(n+1)}{p_{1}(n)} & 0 & \ldots & 0\\
            0 & \frac{p_{2}(n+1)}{p_{2}(n)} & \ldots & 0\\
            \vdots & \vdots & \vdots & \vdots\\
            0 & 0 & \ldots & \frac{p_{N}(n+1)}{p_{N}(n)}
        \end{bmatrix}, \quad n\in \mathbb{N}.
    \end{equation*}
    Hence,
    \begin{align*}
        r(p(n)^{-1/2}p(n+1)p(n)^{-1/2}) &\le \tr(p(n)^{-1/2}p(n+1)p(n)^{-1/2}) \\
        &= \sum_{j=1}^{N}\frac{p_{j}(n+1)}{p_{j}(n)} \xrightarrow{n\to\infty} N.
    \end{align*}
    By Observation \ref{ObsSupremumBySpectralRadius}, the condition Theorem \ref{ThmConstructingMIsometricUWS}(iii) is satisfied.
\end{proof}
If $ H $ is infinite dimensional, then the conclusion of Lemma \ref{LemSupremumIsFiniteIfCoeffsCommute} does not hold.
\begin{example}
    For $ k\in \mathbb{Z}_{1} $ define a polynomial $ q_{k}\in \mathbb{R}_{2}[z] $ by the formula
    \begin{equation*}
        q_{k}(z) = \left( k+\frac{1}{k+1} \right)^{-1}\left( k+\frac{2}{3} \right)^{-1}\left( z-k-\frac{1}{k+1} \right)\left( z-k-\frac{2}{3} \right), \quad z\in \mathbb{C}.
    \end{equation*}
    Let $ H $ be an infinite dimensional separable Hilbert space and denote by $ (e_{j})_{j\in \mathbb{Z}_{1}}\subset H $ its orthonormal basis. For $ z\in \mathbb{C} $ define a diagonal operator $ p(z)\in \mathbf{B}(H) $ as follows:
    \begin{equation*}
        p(z)e_{k} = q_{k}(z)e_{k}, \quad k\in \mathbb{Z}_{1}.
    \end{equation*}
    Clearly, $ p\in \mathbf{B}(H)_{2}[z] $. Since $ q_{k}(n) > 0 $ for every $ n\in \mathbb{N} $, $ p(n) $ is a positive operator. It is easy to see that $ p(0) = I $. For every $ n\in \mathbb{N} $ we have
    \begin{equation*}
        \langle p(n)e_{k},e_{k}\rangle = q_{k}(n) = \left( \frac{k+1}{k^{2}+k+1}n-1 \right)\left( \frac{3}{3k+2}n-1 \right) \xrightarrow{k\to\infty} 1.
    \end{equation*}
    From the above equality we deduce that for every $ n\in \mathbb{N} $ there exists a constant $ c_{n}\in (0,\infty) $ such that $ \lVert p(n)h\rVert\ge c_{n}\lVert h\rVert $, $ h\in H $. This implies that $ \mathcal{R}(p(n)) $ is closed for every $ n\in \mathbb{N} $. Since $ p(n) $ is positive, it follows from \eqref{FormRangeKernelDecom} that $ p(n) $ is invertible for each $ n\in \mathbb{N} $. However,
    \begin{align*}
        \frac{\langle p(n+1)e_{n},e_{n}\rangle}{\langle p(n)e_{n},e_{n}\rangle} = \frac{\frac{1}{3}\left( 1-\frac{1}{n+1} \right)}{\frac{1}{n+1}\cdot \frac{2}{3}} = \frac{n}{2} \xrightarrow{n\to \infty} \infty.
    \end{align*}
    Therefore, the condition (iii) in Theorem \ref{ThmConstructingMIsometricUWS} is not satisfied.
\end{example}
As an another example of use of Theorem \ref{ThmCharOfMIsometricUWSByOperatorPolynomial} we prove that every sequence $ A_{1},\ldots,A_{m}\in \mathbf{B}(H) $ of invertible operators can be completed to the sequence of weights of $ (m+2) $-isometric unilateral weighted shift. Abdullah and Le proved such a result for classical unilateral weighted shifts with scalar weights (see \cite[Proposition 7]{abdullahStructureMisometricWeighted2016}); the idea of the proof for shifts with operator weights is similar. Before we state the result, we introduce some notation. For distinct complex numbers $ x_{0},\ldots,x_{m}\in \mathbb{C} $ we define the polynomials $ \ell_{j}\in \mathbb{C}[x] $ ($ j=0,\ldots,m $) as follows\footnote{We stick to the convention that $ \prod \varnothing = 1 $.}:
\begin{equation*}
    \ell_{j}(z) = \prod_{\substack{k=0,\ldots,m\\k\not=j}} \frac{z-x_{k}}{x_{j}-x_{k}}, \quad z\in \mathbb{C}.
\end{equation*}
It is a matter of routine to verify that $ \ell_{j}(x_{j}) = 1 $ for $ j=0,\ldots,m $ and $ \ell_{j}(x_{k}) = 0 $ for $ j,k\in \{0,\ldots,m\} $, $ j\not=k $.   
\begin{theorem}
    \label{ThmCompletingWeightsToMIsometricUWS}
    Let $ H $ be a Hilbert space and let $ A_{1},\ldots,A_{m} \in \mathbf{B}(H) $ be invertible operators. Then there exists an $ (m+2) $-isometric unilateral weighted shift $ S\in \mathbf{B}(\ell^{2}(\mathbb{N},H)) $ with weights $ (S_{j})_{j\in \mathbb{Z}_{1}} $ such that $ S_{j} = A_{j} $ for every $ j = 1,\ldots,m $ and that $ S_{j} $ is positive for $ j\in \mathbb{Z}_{m+1} $. 
\end{theorem}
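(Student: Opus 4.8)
The plan is to reduce the statement to Theorem~\ref{ThmConstructingMIsometricUWS} by building a suitable operator polynomial of degree at most $m+1$ through an interpolation--plus--correction argument, and then to incorporate the prescribed first weights $A_{1},\ldots,A_{m}$ by a conjugation (change of variables) trick.

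First I would set $B = A_{m}\cdots A_{1}$, which is invertible, and $V_{0}=I$, $V_{j}=\lvert A_{j}\cdots A_{1}\rvert^{2}$ for $j=1,\ldots,m$; these are positive and invertible. Using the Lagrange basis polynomials $\ell_{0},\ldots,\ell_{m}$ associated with the nodes $x_{0}=0,x_{1}=1,\ldots,x_{m}=m$, put $q(z)=\sum_{j=0}^{m}V_{j}\ell_{j}(z)\in\mathbf{B}(H)_{m}[z]$, so that $q(j)=V_{j}$ for $j=0,\ldots,m$. With $\omega(z)=\prod_{k=0}^{m}(z-k)$, define $p(z)=q(z)+c\,\omega(z)$ for a constant $c>0$ to be chosen; then $p\in\mathbf{B}(H)_{m+1}[z]$, $p(0)=I$, and $p(j)=V_{j}$ for $j=1,\ldots,m$ because $\omega$ vanishes at $0,1,\ldots,m$.

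The key point is to choose $c$ so that $p$ fulfils conditions (ii)--(iii) of Theorem~\ref{ThmConstructingMIsometricUWS}. Since $\deg\omega=m+1>m\ge\deg q$ and $\omega(n)>0$ for $n\ge m+1$, we have $\sup_{n\ge m+1}\lVert q(n)\rVert/\omega(n)<\infty$; taking $c$ strictly larger than this supremum gives $\langle p(n)h,h\rangle\ge\bigl(c\,\omega(n)-\lVert q(n)\rVert\bigr)\lVert h\rVert^{2}>0$ for all $n\ge m+1$ and $h\ne 0$, so $p(n)$ is positive and bounded below, hence invertible by \eqref{FormRangeKernelDecom}; on $\{0,\ldots,m\}$ it equals $V_{n}$, again positive and invertible. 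For (iii), comparing the common $\sim c\,n^{m+1}$ growth of numerator and denominator shows $\langle p(n+1)h,h\rangle/\langle p(n)h,h\rangle$ stays bounded as $n\to\infty$, while for the finitely many small $n$ each such ratio is finite since $p(n)$ is bounded below; hence $C(p)<\infty$. This uniform growth comparison, valid even when $H$ is infinite dimensional, is the step I expect to be the main obstacle to state cleanly.

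Finally I would set $\hat p(z)=(B^{\ast})^{-1}p(z)B^{-1}$ and $\check p(z)=\hat p(z+m)$. Then $\check p\in\mathbf{B}(H)_{m+1}[z]$, each $\check p(n)$ is positive and invertible, and $\check p(0)=\hat p(m)=(B^{\ast})^{-1}\lvert B\rvert^{2}B^{-1}=I$; moreover $\langle\check p(n+1)h,h\rangle/\langle\check p(n)h,h\rangle=\langle p(n+m+1)g,g\rangle/\langle p(n+m)g,g\rangle$ with $g=B^{-1}h$, so $C(\check p)\le C(p)<\infty$. Applying Theorem~\ref{ThmConstructingMIsometricUWS} to $\check p$ (with $m+2$ in place of $m$) yields a uniformly bounded sequence $(T_{i})_{i\in\mathbb{Z}_{1}}$ of positive invertible operators with $\lvert T_{i}\cdots T_{1}\rvert^{2}=\check p(i)$. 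Defining $S_{j}=A_{j}$ for $j=1,\ldots,m$ and $S_{m+i}=T_{i}$ for $i\in\mathbb{Z}_{1}$ gives a uniformly bounded weight sequence, hence a bounded unilateral weighted shift $S$; a direct computation yields $\lvert S_{n}\cdots S_{1}\rvert^{2}=p(n)$ for every $n\in\mathbb{Z}_{1}$ (for $n\le m$ this is $V_{n}=q(n)=p(n)$, and for $n=m+i$ it is $B^{\ast}\lvert T_{i}\cdots T_{1}\rvert^{2}B=B^{\ast}\check p(i)B=p(n)$), together with $p(0)=I$ and $\deg p\le m+1$. By Theorem~\ref{ThmCharOfMIsometricUWSByOperatorPolynomial}, $S$ is $(m+2)$-isometric, and $S_{j}=T_{j-m}$ is positive for $j\in\mathbb{Z}_{m+1}$, as required.
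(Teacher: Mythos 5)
Your argument is correct, and its first half is exactly the paper's: the same Lagrange interpolation of $I,\lvert A_{1}\rvert^{2},\ldots,\lvert A_{m}\cdots A_{1}\rvert^{2}$ at the nodes $0,\ldots,m$, the same correction term $c\,z(z-1)\cdots(z-m)$ with $c$ chosen larger than $\sup_{n\ge m+1}\lVert q(n)\rVert/\omega(n)$ to force positivity, the same closed-range-plus-\eqref{FormRangeKernelDecom} argument for invertibility, and the same scalar growth comparison (the bound $\langle p(n+1)h,h\rangle/\langle p(n)h,h\rangle\le\bigl(c\,\omega(n+1)+\lVert q(n+1)\rVert\bigr)/\bigl(c\,\omega(n)-\lVert q(n)\rVert\bigr)$ is independent of $h$, so the step you flagged as delicate is in fact unproblematic in infinite dimensions). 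Where you genuinely diverge is the production of the tail weights: the paper redoes, inline, the inductive construction from Theorem \ref{ThmConstructingMIsometricUWS} with the prescribed initial weights $A_{1},\ldots,A_{m}$ and verifies uniform boundedness of the resulting sequence by hand, whereas you conjugate by $B=A_{m}\cdots A_{1}$ and translate by $m$, checking $\check p(0)=(B^{\ast})^{-1}\lvert B\rvert^{2}B^{-1}=I$ and $C(\check p)\le C(p)$, so that Theorem \ref{ThmConstructingMIsometricUWS} can be cited as a black box for the positive weights $T_{i}$, after which $\lvert S_{m+i}\cdots S_{1}\rvert^{2}=B^{\ast}\check p(i)B=p(m+i)$ recovers the interpolation data and Theorem \ref{ThmCharOfMIsometricUWSByOperatorPolynomial} applies. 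Your route is more modular and avoids repeating the induction and the boundedness estimate for the new weights (they come for free from $\lVert T\rVert=\sqrt{C(\check p)}$ and \eqref{FormNormOfUWS}); the paper's inline construction avoids the conjugation and reindexing bookkeeping and keeps the whole weight sequence and its norm bound visible in one place. Both are complete proofs of the statement.
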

\begin{proof}
    Let $ \ell_{j} $ ($ j=0,\ldots,m $) be the polynomials defined as above for the numbers $ x_{j} = j $. For $ h\in H $ set
    \begin{equation*}
        p(z) := \sum_{j=0}^{m} \widetilde{A}_{j}\ell_{j}(z), \quad z\in \mathbb{C},
    \end{equation*}
    where $ \widetilde{A}_{0} = I $ and $ \widetilde{A}_{j} = \lvert A_{j}\cdots A_{1}\rvert^{2} $ for $ j= 1,\ldots,m $.
    Then $ p\in \mathbf{B}(H)_{m}[z] $. Define $ q\in \mathbf{B}(H)_{m+1}[z] $ by the formula
    \begin{equation*}
        q(z) = \alpha z(z-1)\cdots(z-m)I+p(z), \quad z\in \mathbb{C}, h\in H,
    \end{equation*}
    where\footnote{Since $ \sum_{j=0}^{m} \ell_{j}(n) $ is the polynomial in $ n $ of degree $ m $ and $ n(n-1)\cdots(n-m) $ is the polynomial in $ n $ of degree $ m+1 $, the supremum on the right hand side is finite.}
    \begin{equation}
        \label{ProofFormConditionOnAlpha}
        \alpha > \sup_{n\ge m+1} \frac{\max_{j=0,\ldots,m}\lVert \widetilde{A}_{j}\rVert \sum_{j=0}^{m}\lvert \ell_{j}(n)\rvert}{n(n-1)\cdots(n-m)}.
    \end{equation}
    We have $ q(0) = p(0) = I $ and $ q(n) = p(n) = \lvert A_{n}\cdots A_{1}\rvert^{2} $ for $ n = 1,\ldots,m $. We will prove that $ q(n) $ is positive for $ n\in \mathbb{Z}_{m+1} $. If $ n\in \mathbb{Z}_{m+1} $ and $ h\in H $ is of norm one, then
    \begin{align*}
        \langle q(n)h,h\rangle &= \alpha n(n-1)\cdots (n-m)+ \sum_{j=0}^{m}\langle \widetilde{A}_{j}h,h\rangle\ell_{j}(n) \\
        &\ge \alpha n(n-1)\cdots (n-m)- \left\lvert\sum_{j=0}^{m}\langle \widetilde{A}_{j}h,h\rangle\ell_{j}(n)\right\rvert \\
        &\ge \alpha n(n-1)\cdots (n-m) - \sum_{j=0}^{m}\lvert \langle \widetilde{A_{j}}h,h\rangle\rvert \lvert \ell_{j}(n)\rvert\\
        &\ge \alpha n(n-1)\cdots (n-m) - \sum_{j=0}^{m} \lVert \widetilde{A_{j}}\rVert \lvert \ell_{j}(n)\rvert\\
        &\ge \alpha n(n-1)\cdots (n-m) - \max_{j=0,\ldots,m}\lVert \widetilde{A_{j}}\rVert \sum_{j=0}^{m} \lvert \ell_{j}(n)\rvert.
    \end{align*}
    By \eqref{ProofFormConditionOnAlpha},
    \begin{equation}
        \label{ProofFormPolynomialBoundedFromBelow}
        \langle q(n)h,h\rangle\ge \alpha n(n-1)\cdots (n-m) - \max_{j=0,\ldots,m}\lVert \widetilde{A_{j}}\rVert \sum_{j=0}^{m} \lvert \ell_{j}(n)\rvert >0
    \end{equation}
    From the above we also deduce that for every $ n\in \mathbb{Z}_{m+1} $ there exists $ c_{n}\in (0,\infty) $ such that $ \langle q(n)h,h\rangle \ge c_{n}\lVert h\rVert^{2} $ for every $ h\in H $. Therefore, $ \mathcal{R}(q(n)) $ is closed and, consequently, by \eqref{FormRangeKernelDecom}, $ q(n) $ is invertible for every $ n\in \mathbb{Z}_{m+1} $. We define the sequence $ (S_{j})_{j\in \mathbb{Z}_{1}} $ of invertible operators by induction. For $ j=1,\ldots, m $ set $ S_{j} = A_{j} $. If $ S_{1},\ldots, S_{n}\in \mathbf{B}(H) $ ($ n\in \mathbb{Z}_{m} $) are defined, then we set
    \begin{equation*}
        S_{n+1} := \left[ (S_{n}^{\ast})^{-1}\cdots (S_{1}^{\ast})^{-1}q(n+1)S_{n}^{-1}\cdots S_{1}^{-1} \right]^{1/2}.
    \end{equation*}
    Clearly, each $ S_{n} $ ($ n\ge m+1 $) is positive and invertible. Moreover,
    \begin{equation}
        \label{ProofFormModuliByPolynomial}
        \langle \lvert S_{n}\cdots S_{1}\rvert^{2}h,h\rangle = \langle q(n)h,h\rangle, \quad n\in \mathbb{Z}_{1}, h\in H.
    \end{equation}
    It remains to check that $ (S_{j})_{j\in \mathbb{Z}_{1}} $ is uniformly bounded.
    Note that for every $ n\in \mathbb{Z}_{m+1} $ and $ h\in H $ of norm one, we have
    \begin{align*}
        &\frac{\lVert S_{n+1}\cdots S_{n}h\rVert^{2}}{\lVert S_{n}\cdots S_{n}h\rVert^{2}} = \frac{\langle q(n+1)h,h\rangle}{\langle q(n)h,h\rangle}\\
        &= \frac{\alpha(n+1)n\cdots(n+1-m)+\sum_{j=0}^{m}\langle \widetilde{A_{j}}h,h\rangle\ell_{j}(n+1)}{\alpha n(n-1)\cdots(n-m)+\sum_{j=0}^{m}\langle \widetilde{A_{j}}h,h\rangle\ell_{j}(n)}\\
        &\stackrel{\eqref{ProofFormPolynomialBoundedFromBelow}}{\le}\frac{\alpha(n+1)n\cdots(n+1-m)+\max_{j=0,\ldots,m}\lVert \widetilde{A_{j}}\rVert\sum_{j=0}^{m}\ell_{j}(n+1)}{\alpha n(n-1)\cdots (n-m) - \max_{j=0,\ldots,m}\lVert \widetilde{A_{j}}\rVert \sum_{j=0}^{m} \lvert \ell_{j}(n)\rvert}.
    \end{align*}
    Since the polynomial $ \ell_{j} $ is of degree $ m $ for every $ j=0,1\ldots,m $, it is a matter of routine to verify that
    \begin{equation*}
        \lim_{n\to\infty} \frac{\alpha(n+1)n\cdots(n+1-m)+\max_{j=0,\ldots,m}\lVert \widetilde{A_{j}}\rVert\sum_{j=0}^{m}\ell_{j}(n+1)}{\alpha n(n-1)\cdots (n-m) - \max_{j=0,\ldots,m}\lVert \widetilde{A_{j}}\rVert \sum_{j=0}^{m} \lvert \ell_{j}(n)\rvert} = 1.
    \end{equation*}
    It follows that there exists $ C\in (0,\infty) $ such that for every $ n\in \mathbb{Z}_{m+1} $,
    \begin{equation*}
        \frac{\alpha(n+1)n\cdots(n+1-m)+\max_{j=0,\ldots,m}\lVert \widetilde{A_{j}}\rVert\sum_{j=0}^{m}\ell_{j}(n+1)}{\alpha n(n-1)\cdots (n-m) - \max_{j=0,\ldots,m}\lVert \widetilde{A_{j}}\rVert \sum_{j=0}^{m} \lvert \ell_{j}(n)\rvert} \le C.
    \end{equation*}
    Thus,
    \begin{equation*}
        \lVert S_{n+1}\cdots S_{n}h\rVert^{2} \le C \lVert S_{n}\cdots S_{n}h\rVert^{2}, \quad n\in \mathbb{Z}_{m+1}, h\in H.
    \end{equation*}
    Since $ \mathcal{R}(S_{n}\cdots S_{1}) = H $, we have
    \begin{equation*}
        \lVert S_{n+1}\rVert \le \sqrt{C}, \quad n\in \mathbb{Z}_{m+1}.
    \end{equation*}
    Therefore,
    \begin{equation*}
        \sup_{n\in \mathbb{Z}_{1}}\lVert S_{n}\rVert \le \max\{\sqrt{C}, \lVert S_{1}\rVert,\ldots,\lVert S_{m+1}\rVert \}.
    \end{equation*}
    Using \eqref{ProofFormModuliByPolynomial} we deduce from Theorem \ref{ThmCharOfMIsometricUWSByOperatorPolynomial} that the weighted shift $ S\in \mathbf{B}(\ell^{2}(\mathbb{Z},H)) $ with weights $ (S_{j})_{j\in \mathbb{Z}_{1}} $ is $ (m+2) $-isometric and such that $ S_{j} = A_{j} $ for every $ j=1,\ldots,m $.
\end{proof}
Theorem \ref{ThmCompletingWeightsToMIsometricUWS} states (in particular) that if $ A_{1}\in \mathbf{B}(H) $ is invertible, then there exists a 3-isometric unilateral weighted shift $ S\in \mathbf{B}(\ell^{2}(\mathbb{N},H)) $ with weights starting from $ A_{1} $. However, it may not be possible to find such a shift, which is 2-isometric. It was proved by Jabłoński in \cite[Proposition 3.2 and Remark 3.4]{jablosnkiHyperexpansiveOperatorValuedUWS} that $ A_{1} $ is a first weight of 2-isometric unilateral weighted shifts if and only if $ A_{1}^{\ast}A_{1}\ge I $. \par 
Next, let us make some remarks about commutativity of weights of $ m $-isometric unilateral weighted shift. We start from general result.
\begin{theorem}
    \label{ThmWhenWeightsCommuteUWS}
    Let $ H $ be a Hilbert space and let $ S\in \mathbf{B}(\ell^{2}(\mathbb{N},H)) $ be an $ m $-isometric unilateral weighted shifts with positive and invertible weights $ (S_{j})_{j\in \mathbb{Z}_{1}}\subset \mathbf{B}(H) $. The following conditions are equivalent:
    \begin{enumerate}
        \item all weights of $ S $ commute,
        \item $ S_{1},\ldots,S_{m-1} $ commute,
        \item if $ p(z) = A_{m-1}z^{m-1}+\ldots+A_{1}z+I \in \mathbf{B}(H)_{m-1}[z] $ is a polynomial as in Theorem \ref{ThmCharOfMIsometricUWSByOperatorPolynomial}(iii), then $ A_{m-1},\ldots,A_{1} $ commute. 
    \end{enumerate}
\end{theorem}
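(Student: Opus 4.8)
The plan is to prove the cyclic chain of implications (i) $\Rightarrow$ (ii) $\Rightarrow$ (iii) $\Rightarrow$ (i). The implication (i) $\Rightarrow$ (ii) is immediate. Since $S$ is $m$-isometric, Theorem \ref{ThmCharOfMIsometricUWSByOperatorPolynomial} guarantees that the polynomial $p$ in (iii) exists, and Lemma \ref{LemOperatorPolynomialFromCharOfMIsometricUWSByOperatorPolynomial}... (rather, Lemma \ref{LemOperatorPolynomialFromCharOfMIsometricUWS}) even shows it is uniquely determined, so the statement "a polynomial as in Theorem \ref{ThmCharOfMIsometricUWSByOperatorPolynomial}(iii)" is unambiguous. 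The case $m=1$ is degenerate — then $S^{\ast}S=I$ forces $S_j^2=I$ and hence $S_j=I$ for all $j$, so all three conditions hold — and I would dispose of it first and assume $m\ge 2$.

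For (ii) $\Rightarrow$ (iii) I would invoke Lemma \ref{LemOperatorPolynomialFromCharOfMIsometricUWS}, which writes each coefficient $A_j$ as a real linear combination of $I,\lvert S_1\rvert^2,\ldots,\lvert S_{m-1}\cdots S_1\rvert^2$. Because the weights are positive (hence self-adjoint), $\lvert S_\ell\cdots S_1\rvert^2 = S_1 S_2\cdots S_{\ell-1}S_\ell^2 S_{\ell-1}\cdots S_1$, and when $S_1,\ldots,S_{m-1}$ commute and $\ell\le m-1$ this collapses to $S_1^2 S_2^2\cdots S_\ell^2$, an element of the commutative $C^{\ast}$-algebra generated by $S_1,\ldots,S_{m-1}$. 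Hence every $A_j$ lies in that commutative algebra, so $A_{m-1},\ldots,A_1$ commute.

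For (iii) $\Rightarrow$ (i) — the substantive implication — let $\mathcal{A}$ be the unital $C^{\ast}$-subalgebra of $\mathbf{B}(H)$ generated by $A_1,\ldots,A_{m-1}$. By Lemma \ref{LemOperatorPolynomialFromCharOfMIsometricUWS} these operators are self-adjoint, and by hypothesis they commute, so $\mathcal{A}$ is commutative; being a $C^{\ast}$-algebra it is inverse-closed and closed under the continuous functional calculus of its self-adjoint elements, and $p(n)\in\mathcal{A}$ for every $n\in\mathbb{N}$. I would then show by induction on $n$ that $S_n\in\mathcal{A}$. Put $T_0:=I$ and $T_n:=S_n\cdots S_1$. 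The inductive hypothesis provides that $S_1,\ldots,S_{n-1}$ are pairwise-commuting positive elements of $\mathcal{A}$, whence $T_{n-1}$ is a positive element of $\mathcal{A}$; since $T_{n-1}^2=\lvert T_{n-1}\rvert^2=p(n-1)$ (using $p(0)=I$), in fact $T_{n-1}=p(n-1)^{1/2}$. From $T_n=S_nT_{n-1}$ and positivity of $S_n$ and $T_{n-1}$ we get $p(n)=\lvert T_n\rvert^2=T_{n-1}S_n^2T_{n-1}$, so $S_n^2=p(n-1)^{-1/2}p(n)p(n-1)^{-1/2}$; here commutativity of $\mathcal{A}$ is essential, since it lets me rewrite the right-hand side as $p(n)p(n-1)^{-1}$, a positive invertible element of $\mathcal{A}$. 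Taking the positive square root inside $\mathcal{A}$ gives $S_n=\bigl(p(n)p(n-1)^{-1}\bigr)^{1/2}\in\mathcal{A}$, closing the induction; since all weights then lie in the commutative algebra $\mathcal{A}$, they commute.

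The only genuinely delicate point is (iii) $\Rightarrow$ (i): the hypothesis controls only the finitely many coefficients $A_1,\ldots,A_{m-1}$, yet one must conclude something about the whole infinite family of weights. The mechanism that makes this possible is that positivity of the weights forces $S_1=p(1)^{1/2}$ and, inductively, $S_n=(p(n)p(n-1)^{-1})^{1/2}$, and these expressions remain inside the commutative algebra $\mathcal{A}$ exactly because the coefficients commute — without that, $p(n-1)^{-1/2}p(n)p(n-1)^{-1/2}$ need no longer lie in a commutative algebra containing all the $p(k)$, and the induction breaks. Everything else is routine bookkeeping with the functional calculus.
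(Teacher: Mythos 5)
Your proof is correct, but the substantive implication (iii)$\Rightarrow$(i) is argued along a genuinely different route than the paper's. The paper first establishes a standalone Lemma \ref{LemCommutingModuli} (for any sequence of positive invertible operators, pairwise commutativity of the moduli $\lvert S_n\cdots S_1\rvert$ forces pairwise commutativity of the weights, proved by a fairly involved induction with repeated use of the square root theorem), and then observes that commuting coefficients make all $p(n)=\lvert S_n\cdots S_1\rvert^{2}$ pairwise commute, so the lemma applies. You instead work inside the unital commutative $C^{\ast}$-algebra $\mathcal{A}$ generated by $A_1,\ldots,A_{m-1}$ and show inductively that every weight lies in $\mathcal{A}$, via $T_{n-1}=S_{n-1}\cdots S_1=p(n-1)^{1/2}$ and $S_n=\bigl(p(n)p(n-1)^{-1}\bigr)^{1/2}$, using spectral permanence and the continuous functional calculus to keep inverses and square roots inside $\mathcal{A}$; all steps check out (positivity of products of commuting positive operators, uniqueness of positive square roots, invertibility of the $p(n)$). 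What each approach buys: the paper's lemma is independent of the polynomial structure and so is a reusable fact about sequences with commuting partial-product moduli, whereas your argument is shorter, avoids the delicate induction of Lemma \ref{LemCommutingModuli}, and yields strictly more information along the way — the explicit formulas $S_n=(p(n)p(n-1)^{-1})^{1/2}$ (cf.\ Corollary \ref{CorFormulaForWeightsUWSDoublyCommutingWeights}, where unitary equivalence becomes equality) and the fact that all weights lie in the commutative $C^{\ast}$-algebra generated by the coefficients; it would also feed into the bilateral case (Theorem \ref{ThmWhenWeightsCommuteBWS}) just as well, since that proof reduces to the unilateral statement. Your separate treatment of $m=1$ and the remark on uniqueness of $p$ are harmless but not needed: the general argument covers $m=1$ vacuously.
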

In the proof we use the following lemma.
\begin{lemma}
    \label{LemCommutingModuli}
    Let $ H $ be a Hilbert space and let $ (S_{j})_{n\in \mathbb{Z}_{1}}\subset \mathbf{B}(H) $ be a sequence of positive and invertible operators. Assume that for every $ n,m\in \mathbb{Z}_{1} $, $ \lvert S_{n}\cdots S_{1}\rvert $ commutes with $ \lvert S_{m}\cdots S_{1}\rvert $. Then $ S_{n} $ commutes with $ S_{m} $ for every $ n,m\in \mathbb{Z}_{1} $.
\end{lemma}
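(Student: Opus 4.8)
The plan is to run an induction on $k$ proving that $S_1,\dots,S_k$ mutually commute. Abbreviate $T_n:=S_nS_{n-1}\cdots S_1$; since each $S_i$ is positive, hence self-adjoint, one has $\lvert T_n\rvert^2=T_n^{\ast}T_n=S_1\cdots S_{n-1}S_n^{2}S_{n-1}\cdots S_1$, so in particular $\lvert T_1\rvert=S_1$. The hypothesis says that all the $\lvert T_n\rvert$ commute; by the continuous functional calculus this is equivalent to all the $\lvert T_n\rvert^{2}$ commuting, and the only consequence I shall actually use is that $\lvert T_j\rvert$ commutes with $\lvert T_{k+1}\rvert^{2}$ whenever $j\le k$. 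The base case $k=1$ is vacuous, so I would assume $S_1,\dots,S_k$ pairwise commute and work towards $S_{k+1}$ commuting with each of them.

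Set $V_j:=S_1S_2\cdots S_j$ for $0\le j\le k$ (with $V_0=I$) and $W_j:=S_{j+1}\cdots S_k$ for $1\le j\le k$ (with $W_k=I$). By the inductive hypothesis each $V_j$ is a product of commuting positive invertible operators, hence is positive and invertible, the $V_j$ commute with each other and with the $W_j$, and $\lvert T_j\rvert=V_j$ because $\lvert T_j\rvert^{2}=V_j^{2}$. Using commutativity of $S_1,\dots,S_k$ one gets $S_1\cdots S_k=V_jW_j$ and $S_k\cdots S_1=W_jV_j$, so for each fixed $j\in\{1,\dots,k\}$
\[
\lvert T_{k+1}\rvert^{2}=(S_1\cdots S_k)\,S_{k+1}^{2}\,(S_k\cdots S_1)=V_j\bigl(W_jS_{k+1}^{2}W_j\bigr)V_j .
\]

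The heart of the argument is then the elementary remark that an invertible operator $A$ which commutes with $AXA$ must commute with $X$ (multiply $A^{2}XA=AXA^{2}$ by $A^{-1}$ on the left and by $A^{-1}$ on the right). Applying it with $A=V_j=\lvert T_j\rvert$ and $X=W_jS_{k+1}^{2}W_j$, the hypothesis gives that $V_j$ commutes with $W_jS_{k+1}^{2}W_j$; since $V_j$ also commutes with $W_j$, hence with $W_j^{-1}$, it commutes with $W_j^{-1}\bigl(W_jS_{k+1}^{2}W_j\bigr)W_j^{-1}=S_{k+1}^{2}$, and therefore, that operator being positive, with $S_{k+1}=(S_{k+1}^{2})^{1/2}$. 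Thus $V_j=S_1\cdots S_j$ commutes with $S_{k+1}$ for every $j=1,\dots,k$, whence $S_j=V_{j-1}^{-1}V_j$ commutes with $S_{k+1}$ for each such $j$; together with the inductive hypothesis this gives that $S_1,\dots,S_{k+1}$ pairwise commute, completing the induction, and taking $k=\max\{n,m\}$ yields the claim. The point that needs care is the bookkeeping of the inductive step: it is precisely the inductive hypothesis that forces $T_j$ to be positive and equal to $V_j$ for $j\le k$, and this is what lets one peel the factor $V_j$ off both sides of $\lvert T_{k+1}\rvert^{2}$ via the invertibility trick, reducing a statement about the long product $S_1\cdots S_k$ to one about the single weight $S_{k+1}$; arguing instead through the polar decompositions $T_n=U_n\lvert T_n\rvert$ would drag the unitaries $U_n$ through the computation and is markedly less transparent. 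Everything else — the functional-calculus steps and the telescoping $S_j=V_{j-1}^{-1}V_j$ — is routine.
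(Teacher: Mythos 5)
Your proof is correct and follows essentially the same strategy as the paper's: an induction on the number of mutually commuting weights, a cancellation of invertible factors to reduce the hypothesis on the moduli $\lvert S_{k+1}\cdots S_1\rvert$ to the statement that the earlier weights commute with $S_{k+1}^{2}$, and a final appeal to the square root theorem. The only difference is organizational — you peel off the positive partial products $V_j=\lvert S_j\cdots S_1\rvert$ directly via the observation that an invertible $A$ commuting with $AXA$ commutes with $X$, whereas the paper first shows each individual weight commutes with $\lvert S_{n+1}\cdots S_1\rvert$ and then conjugates by $\lvert S_n\cdots S_1\rvert^{-1}$; your bookkeeping is arguably a little cleaner but the route is the same.
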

\begin{proof}
    We prove by induction that for every $ n\in \mathbb{Z}_{1} $ the operators $ S_{1},\ldots, S_{n} $ are commuting. First consider the case $ n=2 $. By the assumption, we have
    \begin{equation*}
        S_{1}S_{1}S_{2}S_{2}S_{1} = S_{1}\lvert S_{2}S_{1}\rvert^{2} = \lvert S_{2}S_{1}\rvert^{2}S_{1} = S_{1}S_{2}S_{2}S_{1}S_{1}.
    \end{equation*}
    Since $ S_{1} $ is invertible, it follows that
    \begin{equation*}
        S_{1}S_{2}^{2} = S_{2}^{2}S_{1}.
    \end{equation*}
    By the square root theorem (see \cite[p.265]{rieszFunctionalAnalysis}), $ S_{1} $ commutes with $ S_{2} $. Now suppose that for some $ n\in \mathbb{Z}_{2} $, $ S_{1},\ldots,S_{n} $ commute. We will show that
    \begin{equation}
        \label{ProofFormOneWeightCommuteWithModuliOfProduct}
        S_{k}\lvert S_{n+1}\cdots S_{1}\rvert = \lvert S_{n+1}\cdots S_{1}\rvert S_{k}, \quad k=1,\ldots,n.
    \end{equation}
    For $ k= 1 $, the above equality follows directly from the assumption. Let $ 2\le k \le n $. It is enough to prove that
    \begin{equation}
        \label{ProofFormOneWeightCommuteWithModuliOfProductInverse}
        S_{k}^{-2}\lvert S_{n+1}\cdots S_{1}\rvert^{2} = \lvert S_{n+1}\cdots S_{1}\rvert^{2}S_{k}^{-2}.
    \end{equation}
    Indeed, \eqref{ProofFormOneWeightCommuteWithModuliOfProductInverse} implies that
    \begin{equation*}
        S_{k}^{2}\lvert S_{n+1}\cdots S_{1}\rvert^{2} = \lvert S_{n+1}\cdots S_{1}\rvert^{2}S_{k}^{2}.
    \end{equation*}
    Using the square root theorem we obtain \eqref{ProofFormOneWeightCommuteWithModuliOfProduct}. For the proof of \eqref{ProofFormOneWeightCommuteWithModuliOfProductInverse} observe that, by \cite[Lemma 2.1]{anandCompleteSystemsOfUnitaryInvariants} and by the inductive hypothesis, we have
    \begin{align*}
        S_{k}^{-2}\lvert S_{n+1}\cdots S_{1}\rvert^{2} S_{k-1}^{-2}\cdots S_{1}^{-2} &= S_{k}^{-2}\lvert S_{n+1}\cdots S_{1}\rvert^{2}\lvert S_{k-1}\cdots S_{1}\rvert^{-2}\\
        &=S_{k}^{-2}\lvert S_{k-1}\cdots S_{1}\rvert^{-2}\lvert S_{n+1}\cdots S_{1}\rvert^{2}\\
        &=S_{k}^{-2}S_{k-1}^{-2}\cdots S_{1}^{-2}\lvert S_{n+1}\cdots S_{1}\rvert^{2}\\
        &=\lvert S_{k}\cdots S_{1}\rvert^{-2}\lvert S_{n+1}\cdots S_{1}\rvert^{2}\\
        &=\lvert S_{n+1}\cdots S_{1}\rvert^{2}\lvert S_{k}\cdots S_{1}\rvert^{-2}\\
        &=\lvert S_{n+1}\cdots S_{1}\rvert^{2} S_{k}^{-2}S_{k-1}^{-2}\cdots S_{1}^{-2}.
    \end{align*}
    Since $ \mathcal{R}(S_{k-1}^{-2}\cdots S_{1}^{-1}) = H $, \eqref{ProofFormOneWeightCommuteWithModuliOfProductInverse} follows from the above. Now, again by \cite[Lemma 2.1]{anandCompleteSystemsOfUnitaryInvariants},
    \begin{align*}
        &\lvert S_{n}\cdots S_{1}\rvert^{-1}\lvert S_{n+1}\cdots S_{1}\rvert^{2}\lvert S_{n}\cdots S_{1}\rvert^{-1} = S_{1}^{-1}\ldots S_{n}^{-1}\lvert S_{n+1}\cdots S_{1}\rvert^{2} S_{1}^{-1}\ldots S_{n}^{-1} \\
        &=S_{n}^{-1}\ldots S_{1}^{-1} S_{1}\ldots S_{n}S_{n+1}^{2}S_{n}\ldots S_{1}S_{1}^{-1}\ldots S_{n}^{-1} = S_{n+1}^{2}.
    \end{align*}
    By \eqref{ProofFormOneWeightCommuteWithModuliOfProduct}, we have
    \begin{align*}
        S_{k}S_{n+1}^{2} &= S_{k}\lvert S_{n}\cdots S_{1}\rvert^{-1}\lvert S_{n+1}\cdots S_{1}\rvert^{2}\lvert S_{n}\cdots S_{1}\rvert^{-1}\\
        &=\lvert S_{n}\cdots S_{1}\rvert^{-1}\lvert S_{n+1}\cdots S_{1}\rvert^{2}\lvert S_{n}\cdots S_{1}\rvert^{-1}S_{k}\\
        &=S_{n+1}^{2}S_{k}.
    \end{align*}
    By the square root theorem, $ S_{k}S_{n+1} = S_{n+1}S_{k} $. This completes the proof. 
\end{proof}
\begin{proof}[Proof of Theorem \ref{ThmWhenWeightsCommuteUWS}]
    The implication (i)$ \Longrightarrow $(ii) obviously holds; the implication (ii)$ \Longrightarrow $(iii) follows directly from Lemma \ref{LemOperatorPolynomialFromCharOfMIsometricUWS}. For the proof of implication (iii)$ \Longrightarrow $(i) note that for every $ n,m\in \mathbb{Z}_{1} $, by (iii), we have
    \begin{equation*}
        \lvert S_{n}\cdots S_{1}\rvert^{2}\lvert S_{n}\cdots S_{1}\rvert^{2} = p(n)p(m) = p(m)(n) = \lvert S_{m}\cdots S_{1}\rvert^{2}\lvert S_{n}\cdots S_{1}\rvert^{2}.
    \end{equation*} 
    The application of Lemma \ref{LemCommutingModuli} completes the proof.
\end{proof}

\begin{corollary}
    Let $ H $ be a Hilbert space and let $ S\in \mathbf{B}(\ell^{2}(\mathbb{N},H)) $ be an $ m $-isometric unilateral weighted shifts with positive and invertible weights such that the polynomial $ p\in \mathbf{B}_{m-1}(H)[z] $ as in Theorem \ref{ThmCharOfMIsometricUWSByOperatorPolynomial}(iii) is of the form $ p(z) = Az^{m-1}+I $ for some $ A\in \mathbf{B}(H) $. Then the weights of $ S $ commute. In particular, if $ S $ is 2-isometric, then the weights of $ S $ commute.
\end{corollary}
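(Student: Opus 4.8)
The plan is to deduce this directly from Theorem \ref{ThmWhenWeightsCommuteUWS}. Since $ S $ is $ m $-isometric with positive and invertible weights, Theorem \ref{ThmCharOfMIsometricUWSByOperatorPolynomial} provides a polynomial $ p\in \mathbf{B}(H)_{m-1}[z] $ as in part (iii) (with $ p(0)=I $), and by hypothesis this $ p $ has the form $ p(z)=Az^{m-1}+I $. Writing $ p(z)=A_{m-1}z^{m-1}+\ldots+A_{1}z+I $ in the notation of Theorem \ref{ThmWhenWeightsCommuteUWS}, we therefore have $ A_{m-1}=A $ and $ A_{m-2}=\ldots=A_{1}=0 $. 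Since $ p(0)=I $ is part of the statement of Theorem \ref{ThmCharOfMIsometricUWSByOperatorPolynomial}(iii), this decomposition is unambiguous, so there is no bookkeeping subtlety about ``hidden'' lower-order coefficients.

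The key step is then to verify condition (iii) of Theorem \ref{ThmWhenWeightsCommuteUWS}, namely that $ A_{m-1},\ldots,A_{1} $ pairwise commute. This is immediate: the zero operator commutes with every element of $ \mathbf{B}(H) $, and $ A $ commutes with itself. Hence condition (iii) of Theorem \ref{ThmWhenWeightsCommuteUWS} holds, and by the equivalence (iii)$ \Longrightarrow $(i) of that theorem all the weights of $ S $ commute, which is the first assertion.

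For the ``in particular'' clause, suppose $ S $ is $ 2 $-isometric. Then $ m=2 $, so the polynomial $ p $ furnished by Theorem \ref{ThmCharOfMIsometricUWSByOperatorPolynomial}(iii) lies in $ \mathbf{B}(H)_{1}[z] $ and satisfies $ p(0)=I $; thus automatically $ p(z)=A_{1}z+I $ for some $ A_{1}\in \mathbf{B}(H) $, which is exactly the form $ Az^{m-1}+I $ with $ m-1=1 $ and $ A=A_{1} $. The first part of the corollary therefore applies and the weights of $ S $ commute. There is no real obstacle here: the entire substance of the argument has already been absorbed into Theorem \ref{ThmWhenWeightsCommuteUWS} (and, behind it, Lemma \ref{LemCommutingModuli}), and what remains is only the trivial observation that a coefficient sequence of the shape $ A,0,\ldots,0 $ is commuting.
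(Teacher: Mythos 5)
Your proof is correct and is exactly the argument the paper intends (the corollary is stated without proof as an immediate consequence of Theorem \ref{ThmWhenWeightsCommuteUWS}): the coefficient list $A,0,\ldots,0$ trivially commutes, so (iii)$\Longrightarrow$(i) of that theorem gives commuting weights, and for $m=2$ the polynomial automatically has the form $Az+I$. Nothing is missing.
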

The fact that the weights of 2-isometric unilateral weighted shifts with positive and invertible weights are commuting was pointed out by Jabłoński in \cite[Theorem 3.3]{jablosnkiHyperexpansiveOperatorValuedUWS}; however, the proof is different from ours.

The assumption about positivity of weights in the above corollary cannot be dropped if $ \dim H \ge 2 $. Indeed, if $ U_{1}, U_{2}\in \mathbf{B}(H) $ are two non-commuting unitary operators, then the weighted shift $ S\in \mathbf{B}(\ell^{2}(\mathbb{N},H)) $ with the weights $ (S_{j})_{j\in \mathbb{Z}_{1}} $ defined by $ S_{j} = U_{j\mod 2} $ ($ j\in \mathbb{Z}_{1} $) is isometric (hence, 2-isometric) and, clearly, the weights of $ S $ do not commute. \par
The following example show that for every $ m\in \mathbb{Z}_{3} $ we can find $ m $-isometric weighted shift with positive, invertible and non-commuting weights.
\begin{example}
    Let $ H $ be a Hilbert space of dimension greater than one. Assume that $ A_{1},\ldots,A_{m}\subset \mathbf{B}(H) $ ($ m\in \mathbb{Z}_{2} $) are positive and invertible operators and at least two of them do not commute. Set $ A_{0} = I $. Define a polynomial $ p\in \mathbf{B}(H)_{m}[z] $ by the formula:
    \begin{equation*}
        p(z) = \sum_{j=0}^{m}A_{j}z^{j}, \quad z\in \mathbb{C}.
    \end{equation*}
    It is easy to see that $ p(n) $ is positive and invertible for every $ n\in \mathbb{Z}_{1} $ and that $ p(0) = I $. Since each $ A_{j} $ is positive and invertible, there exist $ c,C\in (0,\infty) $ such that
    \begin{equation*}
        C\lVert h\rVert^{2} \ge \langle A_{j}h,h\rangle = \lVert A_{j}^{1/2}h\rVert^{2} \ge c \lVert h\rVert^{2}, \quad h\in H, j=0,1,\ldots,m.
    \end{equation*}
    Hence,
    \begin{align*}
        \frac{\langle p(n+1)h,h\rangle}{\langle p(n)h,h\rangle} \le \frac{C\sum_{j=0}^{m}(n+1)^{j}}{c\sum_{j=0}^{m}n^{j}}, \quad n\in \mathbb{N}, h\in H, h\not=0.
    \end{align*}
    Since
    \begin{equation*}
        \frac{C\sum_{j=0}^{m}(n+1)^{j}}{c\sum_{j=0}^{m}n^{j}} \stackrel{n\to \infty}{\longrightarrow} \frac{C}{c},
    \end{equation*}
    the condition (iii) of Theorem \ref{ThmConstructingMIsometricUWS} is satisfied. By Theorem \ref{ThmConstructingMIsometricUWS} there exists $ (m+1) $-isometric weighted shift $ S\in \mathbf{B}(\ell^{2}(\mathbb{Z},H)) $ with positive and invertible weights $ (S_{j})_{j\in \mathbb{Z}_{1}} $ such that
    \begin{equation*}
        \lvert S_{n}\cdots S_{1}\rvert^{2} = \sum_{j=0}^{m}A_{j}n^{j}, \qquad n\in \mathbb{N}.
    \end{equation*}
    By Theorem \ref{ThmWhenWeightsCommuteUWS}, the weights of $ S $ do not commute.
\end{example}

	\section{$ m $-isometric bilateral weighted shifts}
\label{SecMIsometricBWS}
The aim of this section is to provide counterpart of results presented in Section \ref{SecMIsometricUWS} for bilateral weighted shifts. Let us introduce some notation.  If $ H $ is a Hilbert space, then for $ k\in \mathbb{Z} $ we denote
\begin{equation*}
    L_{k} = \{(h_{j})_{j\in \mathbb{Z}}\in \ell^{2}(\mathbb{Z},H)\colon h_{j} = 0 \text{ for } j<k\}.
\end{equation*}
If $ S\in \mathbf{B}(\ell^{2}(\mathbb{Z},H)) $ is a bilateral weighted shift with weights $ (S_{j})_{j\in \mathbb{Z}}\subset \mathbf{B}(H) $, then by $ S_{k} $ we denote the restriction of $ S $ to the subspace $ L_{k} $. Below we gather basic properties of the operators $ S_{k} $; we leave the proof to the reader.
\begin{observation}
    \label{ObsOnOperatorsSk}
    Let $ H $ be a Hilbert space and let $ S\in \mathbf{B}(\ell^{2}(\mathbb{Z},H)) $ be a bilateral weighted shift with weights $ (S_{j})_{j\in \mathbb{Z}}\subset \mathbf{B}(H) $. Then:
    \begin{enumerate}
        \item for every $ k\in \mathbb{Z} $, the subspace $ L_{k} $ is an invariant subspace for $ S $ and for $ S^{\ast n}S^{n} $ ($ n\in \mathbb{Z}_{1} $),
        \item for every $ k\in \mathbb{Z} $, $ S_{k} $ is unitarily equivalent to the unilateral weighted shift with weights $ (S_{j+k})_{j\in \mathbb{Z}_{1}} $,
        \item for $ m\in \mathbb{Z}_{1} $, $ S $ is $ m $-isometric if and only if $ S_{k} $ is $ m $-isometric for every $ k\in \mathbb{Z} $.
    \end{enumerate}
\end{observation}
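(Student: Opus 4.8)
The plan is to verify the three assertions in turn, each reducing to a short computation that leans on the matrix formulas recorded earlier. For (i), given $h=(h_j)_{j\in\mathbb{Z}}\in L_k$, the $j$th coordinate of $Sh$ equals $S_jh_{j-1}$, which vanishes whenever $j-1<k$; hence $Sh\in L_{k+1}\subseteq L_k$, so $L_k$ is invariant for $S$. For $S^{\ast n}S^n$ I would invoke Observation~\ref{ObsNthPowerOfShift}(2): its matrix is diagonal, so it acts coordinatewise by $(S^{\ast n}S^n h)_j=\lvert S_{j+n}\cdots S_{j+1}\rvert^2h_j$, which manifestly preserves the defining condition of $L_k$. (One notes in passing that $L_k$ is a closed subspace, being an intersection of kernels of coordinate projections, hence a Hilbert space in its own right.)

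For (ii), I would introduce the map $V_k\colon\ell^{2}(\mathbb{N},H)\to L_k$ sending $(g_i)_{i\in\mathbb{N}}$ to the two-sided sequence whose $j$th entry is $g_{j-k}$ for $j\ge k$ and $0$ for $j<k$. This is a norm-preserving bijection, hence unitary onto $L_k$, and a direct computation gives
\begin{equation*}
    V_k^{-1}S_kV_k(g_i)_{i\in\mathbb{N}}=(0,S_{k+1}g_0,S_{k+2}g_1,\ldots),\qquad(g_i)_{i\in\mathbb{N}}\in\ell^{2}(\mathbb{N},H),
\end{equation*}
which is precisely the unilateral weighted shift with weights $(S_{j+k})_{j\in\mathbb{Z}_1}$; these weights are uniformly bounded because $(S_j)_{j\in\mathbb{Z}}$ is.

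For (iii), I would combine (i) and (ii). By (i) the subspace $L_k$ is invariant for $S$ and for every $S^{\ast n}S^n$, so for $f,g\in L_k$ and $n\in\mathbb{N}$ one has $\langle S^{\ast n}S^nf,g\rangle=\langle S^nf,S^ng\rangle=\langle S_k^nf,S_k^ng\rangle=\langle S_k^{\ast n}S_k^nf,g\rangle$, whence $(S^{\ast n}S^n)|_{L_k}=S_k^{\ast n}S_k^n$ as operators on $L_k$; forming the alternating binomial combination gives $\beta_m(S)|_{L_k}=\beta_m(S_k)$. Consequently, if $S$ is $m$-isometric then $\beta_m(S_k)=0$ for every $k\in\mathbb{Z}$. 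Conversely, if $\beta_m(S_k)=0$ for every $k$, then $\beta_m(S)$ vanishes on $\bigcup_{k\in\mathbb{Z}}L_k$, which contains all finitely supported sequences and is therefore dense in $\ell^{2}(\mathbb{Z},H)$; since $\beta_m(S)$ is bounded, this forces $\beta_m(S)=0$. (Alternatively, one may argue entirely through the diagonal matrix of $\beta_m(S)$ supplied by Observation~\ref{ObsNthPowerOfShift}(2): writing an arbitrary $j\in\mathbb{Z}$ as $j=i+k$ with $i\in\mathbb{N}$ shows $(\beta_m(S))_{j,j}=(\beta_m(S_k))_{i,i}$, and since $\{\,i+k:i\in\mathbb{N},\,k\in\mathbb{Z}\,\}=\mathbb{Z}$, all diagonal entries vanish if and only if all the $\beta_m(S_k)$ do.)

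I do not anticipate a genuine obstacle, as the statement is essentially bookkeeping; the one step deserving care is the identity $(S^{\ast n}S^n)|_{L_k}=S_k^{\ast n}S_k^n$ in (iii), where one must not conflate $(S_k)^{\ast}$, the adjoint taken within $\mathbf{B}(L_k)$, with a restriction of $S^{\ast}$ to $L_k$ — the latter is not even meaningful, since $L_k$ need not be invariant for $S^{\ast}$. Phrasing the argument via sesquilinear forms of pairs of vectors in $L_k$, as above, circumvents this.
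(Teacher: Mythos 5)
Your proof is correct; the paper explicitly leaves this observation to the reader, and your verification (coordinatewise invariance of $L_k$ via the diagonal form of $S^{\ast n}S^{n}$ from Observation~\ref{ObsNthPowerOfShift}, the explicit unitary $V_k$, and the identification $\beta_m(S)|_{L_k}=\beta_m(S_k)$ plus density of $\bigcup_k L_k$) is exactly the routine argument intended. Your closing caution about not confusing $(S_k)^{\ast}$ with a restriction of $S^{\ast}$ is well placed but raises no difficulty, as you note.
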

Now we are in the position to state and prove the main result of this section.
\begin{theorem}
    \label{ThmCharOfMIsometricBWSByOperatorPolynomial}
    Let $ H $ be a Hilbert space and let $ S\in \mathbf{B}(\ell^{2}(\mathbb{Z},H)) $ be a bilateral weighted shift with invertible weights $ (S_{i})_{i\in \mathbb{Z}}\subset \mathbf{B}(H) $. For $ m\in \mathbb{Z}_{1} $ the following conditions are equivalent:
    \begin{enumerate}
        \item $ S $ is $ m $-isometric,
        \item there exists $ p\in \mathbf{B}(H)_{m-1}[z] $ such that $ p(0) = I $ and
        \begin{align}
            \label{FormMIsometricBWSPolynomialPositiveWeights}
            p(n) &= \lvert S_{n}\cdots S_{1}\rvert^{2}, \quad n\in \mathbb{Z}_{1},\\
            \label{FormMIsometricBWSPolynomialNegativeWeights}
            p(-n) &= \lvert S_{-n+1}^{\ast}\cdots S_{0}^{\ast}\rvert^{-2}, \quad n\in \mathbb{Z}_{1}.
        \end{align}
    \end{enumerate}
    Moreover, if (ii) holds, then
    \begin{equation}
        \label{FormNormOfMISometricBWS}
        \lVert S\rVert^{2} = \sup\left\{ \frac{\langle p(n+1)h,h\rangle}{\langle p(n)h,h\rangle}\colon n\in \mathbb{Z}, h\in H,h\not=0 \right\}.
    \end{equation}
\end{theorem}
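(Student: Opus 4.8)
My plan is to reduce the whole statement to the unilateral case via Observation~\ref{ObsOnOperatorsSk}: $S$ is $m$-isometric iff every restriction $S_k$ ($k\in\mathbb{Z}$) is $m$-isometric, and $S_k$ is unitarily equivalent to the unilateral weighted shift with the invertible weights $(S_{k+j})_{j\in\mathbb{Z}_1}$. Also, by \eqref{FormNormOfBWS}, Observation~\ref{ObsOnOperatorsSk}(2) and \eqref{FormNormOfUWS} one has $\lVert S\rVert=\sup_{n\in\mathbb{Z}}\lVert S_n\rVert=\sup_{k\in\mathbb{Z}}\lVert S_k\rVert$. So I would apply Theorem~\ref{ThmCharOfMIsometricUWSByOperatorPolynomial} to each $S_k$ and carefully track how the resulting polynomials are related to one single $p$.

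For (i)$\Rightarrow$(ii) I would first apply Theorem~\ref{ThmCharOfMIsometricUWSByOperatorPolynomial} to $S_0$, obtaining $p\in\mathbf{B}(H)_{m-1}[z]$ with $p(0)=I$ and $p(j)=\lvert S_j\cdots S_1\rvert^2$ for $j\in\mathbb{Z}_1$; this is \eqref{FormMIsometricBWSPolynomialPositiveWeights}. To get \eqref{FormMIsometricBWSPolynomialNegativeWeights}, fix $n\in\mathbb{Z}_1$ and apply Theorem~\ref{ThmCharOfMIsometricUWSByOperatorPolynomial} to $S_{-n}$, getting $p_{-n}\in\mathbf{B}(H)_{m-1}[z]$ with $p_{-n}(0)=I$ and $p_{-n}(j)=\lvert S_{-n+j}\cdots S_{-n+1}\rvert^2$ for $j\in\mathbb{Z}_1$. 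Writing $T_n=S_0S_{-1}\cdots S_{-n+1}$ (invertible), for $j\in\mathbb{Z}_n$ one has $S_{-n+j}\cdots S_{-n+1}=(S_{j-n}\cdots S_1)T_n$ (the first factor being $I$ when $j=n$), so by \eqref{FormMIsometricBWSPolynomialPositiveWeights} and $p(0)=I$, $p_{-n}(j)=T_n^\ast p(j-n)T_n$ for all $j\in\mathbb{Z}_n$. Since both sides are polynomials of degree at most $m-1$ agreeing on the infinite set $\mathbb{Z}_n$, they coincide identically; evaluating at $z=0$ and using $p_{-n}(0)=I$ gives $p(-n)=(T_nT_n^\ast)^{-1}$, which is \eqref{FormMIsometricBWSPolynomialNegativeWeights} because $S_{-n+1}^\ast\cdots S_0^\ast=T_n^\ast$.

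For (ii)$\Rightarrow$(i) it suffices, by the reduction above and by Theorem~\ref{ThmCharOfMIsometricUWSByOperatorPolynomial}(iii)$\Rightarrow$(i), to produce for each $k\in\mathbb{Z}$ a polynomial $q_k\in\mathbf{B}(H)_{m-1}[z]$ with $q_k(0)=I$ and $q_k(j)=\lvert S_{k+j}\cdots S_{k+1}\rvert^2$ for $j\in\mathbb{Z}_1$. I would set $B_k=(S_k\cdots S_1)^{-1}$ for $k\ge1$, $B_0=I$, $B_k=S_0S_{-1}\cdots S_{k+1}$ for $k\le-1$, all invertible, and put $q_k(z)=B_k^\ast p(z+k)B_k$. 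The identity $q_k(0)=I$ follows by a telescoping of $B_k^\ast p(k)B_k$, using \eqref{FormMIsometricBWSPolynomialPositiveWeights} when $k\ge1$ and \eqref{FormMIsometricBWSPolynomialNegativeWeights} when $k\le-1$. For $k\ge0$ the identity $q_k(j)=\lvert S_{k+j}\cdots S_{k+1}\rvert^2$ is immediate from \eqref{FormMIsometricBWSPolynomialPositiveWeights}; for $k=-r$ with $r\in\mathbb{Z}_1$ I would split by the sign of $k+j$: use \eqref{FormMIsometricBWSPolynomialPositiveWeights} for $j>r$, $p(0)=I$ for $j=r$, and \eqref{FormMIsometricBWSPolynomialNegativeWeights} for $1\le j<r$, in the last case factoring $S_0\cdots S_{-r+1}=(S_0\cdots S_{k+j+1})(S_{k+j}\cdots S_{-r+1})$ and cancelling the first group against $p(k+j)=\lvert S_{k+j+1}^\ast\cdots S_0^\ast\rvert^{-2}=\bigl((S_0\cdots S_{k+j+1})(S_0\cdots S_{k+j+1})^\ast\bigr)^{-1}$. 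This case analysis — selecting the correct one of \eqref{FormMIsometricBWSPolynomialPositiveWeights}, \eqref{FormMIsometricBWSPolynomialNegativeWeights} and telescoping finite products of weights together with their inverses — is the only genuinely technical step; I expect it to be the main obstacle, while everything else is a routine transcription of Theorem~\ref{ThmCharOfMIsometricUWSByOperatorPolynomial} through Observation~\ref{ObsOnOperatorsSk}.

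For the ``moreover'' part I would apply, for each $k\in\mathbb{Z}$, the ``moreover'' clause of Theorem~\ref{ThmCharOfMIsometricUWSByOperatorPolynomial} to $S_k$ with its polynomial $q_k$, getting $\lVert S_k\rVert^2=\sup\{\langle q_k(j+1)h,h\rangle/\langle q_k(j)h,h\rangle:j\in\mathbb{N},\,h\in H,\,h\ne0\}$. Substituting $q_k(z)=B_k^\ast p(z+k)B_k$ and performing the substitution $h\mapsto B_k^{-1}h$, which is a bijection of $H\setminus\{0\}$, this becomes $\sup\{\langle p(i+1)h,h\rangle/\langle p(i)h,h\rangle:i\in\mathbb{Z}_k,\,h\in H,\,h\ne0\}$. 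Taking the supremum over $k\in\mathbb{Z}$ and using $\lVert S\rVert^2=\sup_{k\in\mathbb{Z}}\lVert S_k\rVert^2$ gives \eqref{FormNormOfMISometricBWS}.
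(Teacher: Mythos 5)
Your proposal is correct and takes essentially the same route as the paper: reduce to the restrictions $S_k$ via Observation~\ref{ObsOnOperatorsSk}, apply Theorem~\ref{ThmCharOfMIsometricUWSByOperatorPolynomial} to each, and identify the resulting polynomials with conjugates $B_k^{\ast}p(\,\cdot\,+k)B_k$ of a single $p$ by the fact that two polynomials of degree at most $m-1$ agreeing on infinitely many integers coincide. The only minor cosmetic differences are that you build $q_k$ for $k\ge 1$ directly (the paper instead uses the nesting $S_k\subset S_0$) and obtain \eqref{FormNormOfMISometricBWS} by re-indexing the unilateral ``moreover'' clause through the substitution $h\mapsto B_k^{-1}h$ rather than recomputing $\lVert S_n\rVert^{2}$ from $p$ for negative indices as the paper does; both are equally valid.
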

\begin{proof}
    (i)$ \Longrightarrow $(ii). By Observation \ref{ObsOnOperatorsSk} and Theorem \ref{ThmCharOfMIsometricUWSByOperatorPolynomial}, for every $ k\in \mathbb{Z} $, $ k\le 0 $, there exists $ p_{k}\in \mathbf{B}(H)_{m-1}[z] $ such that $ p_{k}(0) = I $ and
    \begin{equation}
        p_{k}(n) = \lvert S_{k+n}\cdots S_{k+1}\rvert^{2}, \quad n\in \mathbb{Z}_{1}.
    \end{equation}
    Observe that
    \begin{align*}
        p_{k}(n-k) = \lvert S_{n}\cdots S_{k+1}\rvert^{2} &= S_{k+1}^{\ast}\cdots S_{0}^{\ast} \lvert S_{n}\cdots S_{1}\rvert^{2}S_{0}\cdots S_{k+1},\\
        &= S_{k+1}^{\ast}\cdots S_{0}^{\ast} p_{0}(n)S_{0}\cdots S_{k+1}\quad n\in \mathbb{Z}_{1}, k\in \mathbb{Z}, k< 0.
    \end{align*}
    Hence, we have
    \begin{equation*}
        p_{0}(n) = (S_{k+1}^{\ast}\cdots S_{0}^{\ast})^{-1}p_{k}(n-k)(S_{0}\cdots S_{k+1})^{-1}, \quad n\in \mathbb{Z}_{1}, k\in \mathbb{Z}, k< 0.
    \end{equation*}
    Since
    \begin{equation*}
        (S_{k+1}^{\ast}\cdots S_{0}^{\ast})^{-1}p_{k}(z-k)(S_{0}\cdots S_{k+1})^{-1} \in \mathbf{B}(H)_{m-1}[z], \quad k\in \mathbb{Z}, k<0,
    \end{equation*}
    it follows that
    \begin{equation*}
        (S_{k+1}^{\ast}\cdots S_{0}^{\ast})^{-1}p_{k}(z-k)(S_{0}\cdots S_{k+1})^{-1} = p_{0}(z), \quad z\in \mathbb{C}, k\in \mathbb{Z}, k<0.
    \end{equation*}
    In particular,
    \begin{align*}
        p_{0}(k) &= (S_{k+1}^{\ast}\cdots S_{0}^{\ast})^{-1}p_{k}(0)(S_{0}\cdots S_{k+1})^{-1}\\
        &=(S_{k+1}^{\ast}\cdots S_{0}^{\ast})^{-1}(S_{0}\cdots S_{k+1})^{-1}\\
        &=(S_{0}\cdots S_{k+1}S_{k+1}^{\ast}\cdots S_{0}^{\ast})^{-1}\\
        &=\lvert S_{k+1}^{\ast}\cdots S_{0}\rvert^{-2}, \quad k\in \mathbb{Z}, k<0.
    \end{align*}
    Therefore, $ p_{0} $ is the polynomial satisfying (ii).\\
    (ii)$ \Longrightarrow $(i). Arguing as in the proof of the converse implication, we can easily verify that for every $ k\in \mathbb{Z} $, $ k<0 $, the polynomial $ p_{k}\in \mathbf{B}(H)_{m-1}[z] $ given by
    \begin{equation}
        \label{ProofFormPolynomialForNegativeSk}
        p_{k}(z) = S_{k+1}^{\ast}\cdots S_{0}^{\ast} p(z+k)S_{0}\cdots S_{k+1}, \quad z\in \mathbb{C},
    \end{equation}
    satisfies Theorem \ref{ThmCharOfMIsometricUWSByOperatorPolynomial}(iii) for $ S_{k} $. Moreover, $ p $ satisfies Theorem \ref{ThmCharOfMIsometricUWSByOperatorPolynomial} for $ S_{0} $. Hence, $ S_{k} $ is $ m $-isometric for every $ k\in \mathbb{Z} $, $ k\le 0 $. Since $ S_{k}\subset S_{0} $ for every $ k\in \mathbb{Z}_{1} $, $ S_{k} $ is $ m $-isometric also for $ k\in \mathbb{Z}_{1} $. The application of Observation \ref{ObsOnOperatorsSk}(iii) gives (i).\\
    Now we prove the ''moreover'' part. Proceeding as in the proof of Theorem \ref{ThmCharOfMIsometricUWSByOperatorPolynomial} we obtain that
    \begin{equation*}
        \lVert S_{n+1}\rVert^{2} = \sup\left\{ \frac{\langle p(n+1)h,h\rangle}{\langle p(n)h,h\rangle}\colon h\in H, h\not=0 \right\}, \quad n\in \mathbb{N}.
    \end{equation*}
    Next, note that
    \begin{equation*}
        \lvert S_{-n+1}^{\ast}\cdots S_{0}^{\ast}\rvert^{-2} = S_{0}^{\ast -1}\cdots S_{-n+1}^{\ast -1}S_{-n+1}^{-1}\cdots S_{0}^{-1}, \quad n\in \mathbb{Z}_{1}
    \end{equation*}
    Hence, if $ n\in \mathbb{Z}_{2} $, then, by the fact that $ \mathcal{R}(S_{-n+1}^{-1}\cdots S_{0}^{-1}) = H $, we have
    \begin{align*}
        \lVert S_{-n+1}\rVert^{2} &= \sup\left\{ \frac{\lVert S_{-n+1}h\rVert^{2}}{\lVert h\rVert^{2}}\colon h\in H,h\not=0 \right\}\\
        &=\sup\left\{ \frac{\lVert S_{-n+1}S_{-n+1}^{-1}\cdots S_{0}^{-1}h\rVert^{2}}{\lVert S_{-n+1}^{-1}\cdots S_{0}^{-1}h\rVert^{2}}\colon h\in H,h\not=0 \right\}\\
        &=\sup\left\{ \frac{\langle S_{0}^{\ast -1}\cdots S_{-n+2}^{\ast -1}S_{-n+2}^{-1}\cdots S_{0}^{-1}h,h\rangle}{\langle S_{0}^{\ast -1}\cdots S_{-n+1}^{\ast -1}S_{-n+1}^{-1}\cdots S_{0}^{-1}h,h\rangle}\colon h\in H,h\not=0 \right\}\\
        &=\sup\left\{ \frac{\langle \lvert S_{-n+2}^{\ast}\cdots S_{0}^{\ast}\rvert^{-2} h,h\rangle}{\langle \lvert S_{-n+1}^{\ast}\cdots S_{0}^{\ast}\rvert^{-2}h,h\rangle}\colon h\in H,h\not=0 \right\}\\
        &=\sup\left\{ \frac{\langle p(-n+1)h,h\rangle}{\langle p(-n)h,h\rangle}\colon h\in H,h\not=0 \right\}.
    \end{align*}
    By the fact that $ p(0) = I $, arguing similarly as above we get that
    \begin{equation*}
        \lVert S_{0}\rVert^{2} = \left\{ \frac{\langle p(0)h,h\rangle}{\langle p(-1)h,h\rangle}\colon h\in H,h\not=0 \right\}.
    \end{equation*}
    Therefore, \eqref{FormNormOfMISometricBWS} follows from \eqref{FormNormOfBWS}.
\end{proof}
Below we state without the proof the counterpart of Corollary \ref{CorFormulaForWeightsUWSDoublyCommutingWeights} for bilateral weighted shifts.
\begin{corollary}
    Let $ H $ and $ S $ be as in Theorem \ref{ThmCharOfMIsometricBWSByOperatorPolynomial}.
    \begin{enumerate}
        \item If $ S $ is $ m $-isometric and the polynomial $ p\in \mathbf{B}(H)_{m-1}[z] $ is as in Theorem \ref{ThmCharOfMIsometricBWSByOperatorPolynomial}(ii), then $ \lvert S_{n}\rvert^{2} $ is unitarily equivalent to $ p(n-1)^{-1/2}p(n)p(n-1)^{-1/2} $ and $ \lvert S_{-n+1}^{\ast}\rvert^{2} $ is unitarily equivalent to $ p(-n+1)^{-1/2}p(-n)p(-n+1)^{-1/2} $ for every $ n\in \mathbb{Z}_{1} $.
        \item If the weights of $ S $ are doubly commuting, then $ m $-isometricity of $ S $ is equivalent to the existence of a polynomial $ p\in \mathbf{B}(H)_{m-1}[z] $ satisfying $ p(0) = I $ and
        \begin{align*}
            \lvert S_{n}\rvert^{2} &= p(n)p(n-1)^{-1} = p(n-1)^{-1}p(n), \quad n\in \mathbb{Z}_{1},\\
            \lvert S_{-n+1}^{\ast}\rvert^{2} &= p(-n)p(-n+1)^{-1} = p(-n+1)^{-1}p(-n), \quad n\in \mathbb{Z}_{1}.
        \end{align*}
    \end{enumerate}
\end{corollary}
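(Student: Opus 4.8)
The plan is to split both statements according to the decomposition of $\mathbb{Z}$ into $\mathbb{N}$ and $\mathbb{Z}\setminus\mathbb{N}$, reducing the nonnegative part to the already-proved unilateral Corollary \ref{CorFormulaForWeightsUWSDoublyCommutingWeights} and treating the negative part by a direct polar-decomposition computation. For the nonnegative indices, recall from Observation \ref{ObsOnOperatorsSk} that the restriction $S_0$ of $S$ to $L_0$ is unitarily equivalent to the $m$-isometric unilateral weighted shift with weights $(S_j)_{j\in\mathbb{Z}_1}$; by \eqref{FormMIsometricBWSPolynomialPositiveWeights}, the restriction of $p$ to $\mathbb{Z}_1$ together with $p(0)=I$ is precisely the polynomial provided by Theorem \ref{ThmCharOfMIsometricUWSByOperatorPolynomial}(iii) for $S_0$. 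Corollary \ref{CorFormulaForWeightsUWSDoublyCommutingWeights}(i) then yields at once that $\lvert S_n\rvert^2$ is unitarily equivalent to $p(n-1)^{-1/2}p(n)p(n-1)^{-1/2}$ for $n\in\mathbb{Z}_1$, and, in the doubly commuting case, Corollary \ref{CorFormulaForWeightsUWSDoublyCommutingWeights}(ii) gives the equalities $\lvert S_n\rvert^2=p(n)p(n-1)^{-1}=p(n-1)^{-1}p(n)$ for $n\in\mathbb{Z}_1$.

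For the negative indices I would work straight from \eqref{FormMIsometricBWSPolynomialNegativeWeights}. Set $A_n:=S_{-n+1}^{\ast}\cdots S_0^{\ast}$ for $n\in\mathbb{Z}_1$ and $A_0:=I$; each $A_n$ is invertible, so it has a polar decomposition $A_n=U_n\lvert A_n\rvert$ with $U_n$ unitary. The recursion $A_n=S_{-n+1}^{\ast}A_{n-1}$ gives $A_n^{\ast}A_n=A_{n-1}^{\ast}\lvert S_{-n+1}^{\ast}\rvert^2A_{n-1}$, whereas \eqref{FormMIsometricBWSPolynomialNegativeWeights} says $A_n^{\ast}A_n=p(-n)^{-1}$, so that $\lvert A_{n-1}\rvert=p(-n+1)^{-1/2}$. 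Substituting $A_{n-1}=U_{n-1}\,p(-n+1)^{-1/2}$ and conjugating by $U_{n-1}$ shows that $\lvert S_{-n+1}^{\ast}\rvert^2$ is unitarily equivalent to $p(-n+1)^{1/2}p(-n)^{-1}p(-n+1)^{1/2}$; invoking the standard fact that $ZZ^{\ast}$ and $Z^{\ast}Z$ are unitarily equivalent for invertible $Z$ rewrites this in the form displayed in part (i). In the doubly commuting case one observes that double commutation of the weights $S_j$ passes to their adjoints, so by \cite[Lemma 2.1]{anandCompleteSystemsOfUnitaryInvariants} the moduli $\lvert S_{-n+1}^{\ast}\rvert^2,\dots,\lvert S_0^{\ast}\rvert^2$ commute and $\lvert S_{-n+1}^{\ast}\cdots S_0^{\ast}\rvert^2=\lvert S_{-n+1}^{\ast}\rvert^2\cdots\lvert S_0^{\ast}\rvert^2$; telescoping in \eqref{FormMIsometricBWSPolynomialNegativeWeights} then produces the asserted identities for $\lvert S_{-n+1}^{\ast}\rvert^2$. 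Finally, the converse implication in (ii) is the routine check, exactly as in the proof of Corollary \ref{CorFormulaForWeightsUWSDoublyCommutingWeights}(ii), that any $p$ with $p(0)=I$ satisfying the two displayed systems of equalities must satisfy \eqref{FormMIsometricBWSPolynomialPositiveWeights}--\eqref{FormMIsometricBWSPolynomialNegativeWeights}, after which $m$-isometricity follows from Theorem \ref{ThmCharOfMIsometricBWSByOperatorPolynomial}.

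I expect the only genuinely delicate point to be the negative-index bookkeeping in part (i): one must keep careful track of the order of the factors in $A_n=S_{-n+1}^{\ast}\cdots S_0^{\ast}$ and of the inverse on the right-hand side of \eqref{FormMIsometricBWSPolynomialNegativeWeights}, so that the exponents $\pm\frac12$ and the arguments $-n$ and $-n+1$ end up in the right slots. Everything else is either an immediate consequence of the unilateral corollary (nonnegative indices) or a telescoping argument with commuting factors (the doubly commuting case), so no new ideas beyond those already used in Section \ref{SecMIsometricUWS} are required.
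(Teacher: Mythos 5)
Your treatment of the nonnegative indices is fine and is the natural route: the paper states this corollary without proof, so the only template is Corollary \ref{CorFormulaForWeightsUWSDoublyCommutingWeights}, which you correctly transport through Observation \ref{ObsOnOperatorsSk}. The genuine problem is the last step of your negative-index argument. Your polar-decomposition computation is correct up to $U_{n-1}^{\ast}\lvert S_{-n+1}^{\ast}\rvert^{2}U_{n-1}=p(-n+1)^{1/2}p(-n)^{-1}p(-n+1)^{1/2}$, but the $Z^{\ast}Z$ versus $ZZ^{\ast}$ swap does not land where you claim: with $Z=p(-n)^{-1/2}p(-n+1)^{1/2}$ it gives unitary equivalence of $\lvert S_{-n+1}^{\ast}\rvert^{2}$ with $p(-n)^{-1/2}p(-n+1)p(-n)^{-1/2}$, which is the \emph{inverse} of the displayed operator $p(-n+1)^{-1/2}p(-n)p(-n+1)^{-1/2}$, and a positive invertible operator is in general not unitarily equivalent to its inverse. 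The same inversion is hidden in your sketch of (ii): telescoping $p(-n)^{-1}=\lvert S_{-n+1}^{\ast}\rvert^{2}\cdots\lvert S_{0}^{\ast}\rvert^{2}$ yields $\lvert S_{-n+1}^{\ast}\rvert^{2}=p(-n+1)p(-n)^{-1}$, not $p(-n)p(-n+1)^{-1}$ as displayed.

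Moreover, this gap cannot be closed by any argument, because the negative-index formulas as printed fail already for scalar weights: at $n=1$, part (i) combined with \eqref{FormMIsometricBWSPolynomialNegativeWeights} would give $\lvert S_{0}^{\ast}\rvert^{2}$ unitarily equivalent to $p(-1)=\lvert S_{0}^{\ast}\rvert^{-2}$, and for the classical $3$-isometric bilateral shift with $\lvert\lambda_{n}\rvert^{2}=p(n)p(n-1)^{-1}$, $p(z)=z^{2}+z+1$, one has $\lvert\lambda_{-1}\rvert^{2}=p(-1)/p(-2)=1/3$ whereas the displayed formula would give $p(-2)/p(-1)=3$. So what your computation actually establishes is the corrected statement with the roles of $p(-n)$ and $p(-n+1)$ interchanged, namely that $\lvert S_{-n+1}^{\ast}\rvert^{2}$ is unitarily equivalent to $p(-n)^{-1/2}p(-n+1)p(-n)^{-1/2}$ and, in the doubly commuting case, $\lvert S_{-n+1}^{\ast}\rvert^{2}=p(-n+1)p(-n)^{-1}=p(-n)^{-1}p(-n+1)$; the printed statement evidently carries a misplaced inverse. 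As a proof of that corrected version your proposal is sound (including the routine converse in (ii) via Theorem \ref{ThmCharOfMIsometricBWSByOperatorPolynomial}); as a proof of the formulas as literally stated, the final rewriting step is an error, and you should flag the discrepancy rather than paper over it with the $ZZ^{\ast}\sim Z^{\ast}Z$ fact.
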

As in the case of unilateral weighted shifts, we have the procedure of constructing of all $ m $-isometric bilateral weighted shifts with invertible and positive weights from a given polynomial $ p\in \mathbf{B}(H)_{m-1}[z] $ satisfying certain conditions.
\begin{theorem}
    \label{ThmConstructingMIsometricBWS}
    Let $ H $ be a Hilbert space. If $ p\in \mathbf{B}(H)_{m-1}[z] $ is a polynomial such that
    \begin{enumerate}
        \item $ p(0) = I $,
        \item $ p(n) $ is positive and invertible for every $ n\in \mathbb{Z} $,
        \item $ D(p):= \sup\left\{ \frac{\langle p(n+1)h,h\rangle}{\langle p(n)h,h\rangle}\colon n\in \mathbb{Z}, h\in H, h\not=0 \right\}<\infty $,
    \end{enumerate}
    then there exists an $ m $-isometric bilateral weighted shift with positive and invertible weights $ (S_{j})_{j\in \mathbb{Z}} $ satisfying \eqref{FormMIsometricBWSPolynomialPositiveWeights} and \eqref{FormMIsometricBWSPolynomialNegativeWeights}. Moreover, $ \lVert S\rVert = \sqrt{D(p)} $.
\end{theorem}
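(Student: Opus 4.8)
The plan is to write the weights $(S_j)_{j\in\mathbb{Z}}$ down explicitly so that the two identities \eqref{FormMIsometricBWSPolynomialPositiveWeights}--\eqref{FormMIsometricBWSPolynomialNegativeWeights} hold verbatim for the given $p$, to check that the resulting family is uniformly bounded (so that $S$ is genuinely a bounded operator), and then to read off both $m$-isometricity and the norm from Theorem \ref{ThmCharOfMIsometricBWSByOperatorPolynomial}. What makes this feasible is that \eqref{FormMIsometricBWSPolynomialPositiveWeights} involves only the weights with indices $\ge 1$ while \eqref{FormMIsometricBWSPolynomialNegativeWeights} involves only the weights with indices $\le 0$, so the two halves of the sequence may be built independently.

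For the indices $j\ge 1$ I would invoke Theorem \ref{ThmConstructingMIsometricUWS} with the same polynomial $p$: its hypotheses (i)--(ii) are contained in ours, and since the supremum defining $C(p)$ ranges over $n\in\mathbb{N}\subseteq\mathbb{Z}$ we have $C(p)\le D(p)<\infty$, so its hypothesis (iii) holds as well. This produces positive invertible operators $S_1,S_2,\dots$ with $\lvert S_n\cdots S_1\rvert^2=p(n)$ for $n\in\mathbb{Z}_1$, and from the proof of that theorem $\lVert S_j\rVert^2\le C(p)\le D(p)$ for every $j\ge 1$.

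The indices $j\le 0$ are handled by a mirror induction. Put $S_0:=p(-1)^{-1/2}$, which is positive and invertible and satisfies $\lvert S_0^{\ast}\rvert^{-2}=S_0^{-2}=p(-1)$. Suppose $S_0,\dots,S_{-n+1}$ have been built as positive invertible operators for which \eqref{FormMIsometricBWSPolynomialNegativeWeights} holds up to the argument $-n$, and set $B_n:=S_{-n+1}^{\ast}\cdots S_0^{\ast}$, an invertible operator which by the inductive hypothesis satisfies $B_n^{\ast}B_n=p(-n)^{-1}$. Define
\begin{equation*}
    S_{-n}:=\bigl[(B_n^{\ast})^{-1}\,p(-n-1)^{-1}\,B_n^{-1}\bigr]^{1/2},
\end{equation*}
which is again positive and invertible. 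Since the weights are self-adjoint one has $B_{n+1}=S_{-n}^{\ast}B_n$, hence $B_{n+1}^{\ast}B_{n+1}=B_n^{\ast}S_{-n}^{2}B_n=p(-n-1)^{-1}$, i.e.\ \eqref{FormMIsometricBWSPolynomialNegativeWeights} holds for the next argument.

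It remains to bound $\lVert S_j\rVert$ for $j\le 0$. As $B_n$ is invertible, substituting $g=B_n h$ gives
\begin{equation*}
    \lVert S_{-n}\rVert^2=\lVert S_{-n}^{\ast}\rVert^2=\sup_{h\not=0}\frac{\lVert B_{n+1}h\rVert^2}{\lVert B_n h\rVert^2}=\sup_{h\not=0}\frac{\langle p(-n-1)^{-1}h,h\rangle}{\langle p(-n)^{-1}h,h\rangle},
\end{equation*}
and likewise $\lVert S_0\rVert^2=\lVert p(-1)^{-1}\rVert$. Taking the index $-n-1$ in the supremum defining $D(p)$ gives $\langle p(-n)h,h\rangle\le D(p)\langle p(-n-1)h,h\rangle$ for every $h$, that is $\frac{1}{D(p)}p(-n)\le p(-n-1)$; since $A\mapsto A^{-1}$ is antitone on positive invertible operators, $p(-n-1)^{-1}\le D(p)\,p(-n)^{-1}$, whence $\lVert S_{-n}\rVert^2\le D(p)$ (the bound on $S_0$ following the same way from $\frac{1}{D(p)}I\le p(-1)$). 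Therefore $\sup_{j\in\mathbb{Z}}\lVert S_j\rVert\le\sqrt{D(p)}<\infty$, so by \eqref{FormNormOfBWS} the bilateral weighted shift $S$ with weights $(S_j)_{j\in\mathbb{Z}}$ is a bounded operator with invertible weights, and by construction it satisfies Theorem \ref{ThmCharOfMIsometricBWSByOperatorPolynomial}(ii) with this very $p$. That theorem then gives that $S$ is $m$-isometric, and its ``moreover'' part gives $\lVert S\rVert^2=D(p)$. I expect the only genuine work to be this uniform bound on the negative weights: one has to promote the scalar estimate behind $D(p)$ to the operator inequality $p(-n)\le D(p)\,p(-n-1)$, invert it correctly, and keep careful track of the adjoints and of the order of the products in the negative-side recursion.
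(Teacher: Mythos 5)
Your proposal is correct and follows essentially the same route as the paper: a two-sided inductive construction of positive invertible weights as (square roots of) congruence transforms of $p(\pm n)$ by the previously built weights, a uniform bound $\lVert S_j\rVert^2\le D(p)$, and an appeal to Theorem \ref{ThmCharOfMIsometricBWSByOperatorPolynomial} for $m$-isometricity and the norm formula. The only cosmetic differences are that you import the positive-index half from Theorem \ref{ThmConstructingMIsometricUWS} (as the paper effectively does) and bound the negative weights via antitonicity of inversion rather than the paper's substitution of $S_{0}\cdots S_{-n+1}h$, which amounts to the same congruence argument; your explicit formula $S_{-n}^{2}=(B_n^{\ast})^{-1}p(-n-1)^{-1}B_n^{-1}$ is indeed the one that makes the recursion for \eqref{FormMIsometricBWSPolynomialNegativeWeights} close.
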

\begin{proof}
    Define the sequence $ (S_{j})_{j\in \mathbb{Z}}\subset \mathbf{B}(H) $ satisfying \eqref{FormMIsometricBWSPolynomialPositiveWeights} and \eqref{FormMIsometricBWSPolynomialNegativeWeights} by induction. Set $ S_{1} := p(1)^{1/2} $ and $ S_{0} := p(-1)^{-1/2} $. Clearly, $ S_{1} $ and $ S_{0} $ are positive and invertible and \eqref{FormMIsometricBWSPolynomialPositiveWeights}-\eqref{FormMIsometricBWSPolynomialNegativeWeights} hold for $ n=-1,0,1 $. Now assume that for some $ k\in \mathbb{Z}_{1} $, $ S_{-k+1},\ldots,S_{0},S_{1},\ldots,S_{k} $ are defined to be positive and invertible operators satisfying \eqref{FormMIsometricBWSPolynomialPositiveWeights}--\eqref{FormMIsometricBWSPolynomialNegativeWeights} for $ n=-k,\ldots,k $. Then
    \begin{align*}
        \widetilde{S}_{k+1} &:= S_{1}^{-1}\cdots S_{k}^{-1}p(k+1)S_{k}^{-1}\cdots S_{1}^{-1},\\
        \widetilde{S}_{-k} &:= S_{-k+1}^{-1}\cdots S_{0}^{-1}p(-k-1)S_{0}^{-1}\cdots S_{-k+1}^{-1}
    \end{align*}
    are positive and invertible and so are $ S_{k+1} := \widetilde{S}_{k+1}^{1/2} $ and $ S_{-k} := \widetilde{S}_{-k}^{-1/2} $. Proceeding as in the proof of Theorem \ref{ThmConstructingMIsometricUWS} we check that \eqref{FormMIsometricBWSPolynomialPositiveWeights} holds for $ n=k+1 $. To verify that \eqref{FormMIsometricBWSPolynomialNegativeWeights} holds for $ n=-k-1 $ observe that
    \begin{align*}
        \langle \lvert S_{-k}\cdots S_{0}\rvert^{-2}h,h\rangle &= \langle S_{0}^{-1}\cdots S_{-k+1}^{-1}\cdot \widetilde{S}_{-k}\cdot S_{-k+1}^{-1}\cdots S_{0}^{-1}h,h\rangle \\
        &= \langle p(-k-1)h,h\rangle.
    \end{align*}
    We will show that the sequence $ (S_{j})_{j\in \mathbb{Z}} $ defined above is uniformly bounded. Arguing as in the proof of Theorem \ref{ThmConstructingMIsometricUWS}, we have $ \lVert S_{n}\rVert^{2}\le D(p) $ for every $ n\in \mathbb{Z}_{1} $.
    In turn, by (iii),
    \begin{align*}
        \langle \lvert S_{-n+2}\cdots S_{0}\rvert^{-2}h,h\rangle &= \langle p(-n+1)h,h\rangle \\
        &\le D(p)\langle p(-n)h,h\rangle = \langle \lvert S_{-n+1}\cdots S_{0}\rvert^{-2}h,h\rangle, \quad n\in \mathbb{Z}_{2}.
    \end{align*}
    Substituting $ S_{0}\cdots S_{-n+1}h $ in place of $ h $ in the above inequality and using the fact that
    \begin{equation*}
        S_{-n+1}\cdots S_{0}\lvert S_{-n+2}\cdots S_{0}\rvert^{-2}S_{0}\cdots S_{-n+1} = S_{-n+2}^{2}
    \end{equation*}
    and
    \begin{equation*}
        S_{-n+1}\cdots S_{0}\lvert S_{-n+1}\cdots S_{0}\rvert^{-2}S_{0}\cdots S_{-n+1} = I,
    \end{equation*}
    we obtain that
    \begin{equation*}
        \lVert S_{-n+1}h\rVert^{2} = \langle S_{-n+1}^{2}h,h\rangle \le D(p)\langle h,h\rangle = D(p)\lVert h\rVert^{2}, \quad h\in H, n\in \mathbb{Z}_{2},
    \end{equation*}
    so $ \lVert S_{-n+1}\rVert^{2} \le D(p) $ for every $ n\in \mathbb{Z}_{2} $. Similar argument shows that $ \lVert S_{0}\rVert^{2}\le D(p) $. We deduce from Theorem \ref{ThmCharOfMIsometricBWSByOperatorPolynomial} that the weighted shift $ S\in \mathbf{B}(\ell^{2}(\mathbb{Z},H)) $ with weights $ (S_{j})_{j\in \mathbb{Z}} $ is $ m $-isometric and $ \lVert S\rVert = \sqrt{D(p)} $.
\end{proof}
\begin{remark}
    \label{RemSupremumBySpectralRadius}
    We can prove, in essentially the same way as in the proof of Observation \ref{ObsSupremumBySpectralRadius}, that if $ p\in \mathbf{B}_{m-1}(H) $ satisfies Theorem \ref{ThmConstructingMIsometricBWS}(i)--(ii), then
    \begin{equation*}
        D(p) = \sup\left\{ r(p(n+1)p(n)^{-1})\colon n\in \mathbb{Z} \right\}.
    \end{equation*}
\end{remark}
As regards commutativity of weights of $ m $-isometric bilateral weighted shift, we have the following result (cf. Theorem \ref{ThmWhenWeightsCommuteUWS}).
\begin{theorem}
    \label{ThmWhenWeightsCommuteBWS}
    Let $ H $ be a Hilbert space and let $ S\in \mathbf{B}(\ell^{2}(\mathbb{Z},H)) $ be an $ m $-isometric bilateral weighted shift with positive and invertible weights $ (S_{j})_{j\in \mathbb{Z}}\subset \mathbf{B}(H) $. The following conditions are equivalent:
    \begin{enumerate}
        \item all weights of $ S $ 
        commute,
        \item $ S_{1},\ldots,S_{m-1} $ commute,
        \item if $ p(z) = A_{m-1}z^{m-1}+\ldots+A_{1}z+I \in \mathbf{B}(H)_{m-1}[z] $ satisfies Theorem \ref{ThmWhenWeightsCommuteBWS}(ii), then $ A_{1},\ldots,A_{m-1} $ commute.
    \end{enumerate}
\end{theorem}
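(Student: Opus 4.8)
The plan is to mirror the proof of Theorem~\ref{ThmWhenWeightsCommuteUWS}, transferring everything to the unilateral setting via the restrictions $S_k$ of Observation~\ref{ObsOnOperatorsSk}. Recall that $S_k$ is unitarily equivalent to the unilateral weighted shift with weights $(S_{j+k})_{j\in\mathbb{Z}_1}$, that $S$ is $m$-isometric iff each $S_k$ is, and that by uniqueness (a polynomial of degree $\le m-1$ prescribed at $0,1,\dots,m-1$ is determined) the polynomial $p$ of Theorem~\ref{ThmCharOfMIsometricBWSByOperatorPolynomial}(ii) coincides with the Theorem~\ref{ThmCharOfMIsometricUWSByOperatorPolynomial}(iii)-polynomial of $S_0$; likewise, for $k<0$ the polynomial $p_k$ of \eqref{ProofFormPolynomialForNegativeSk} is the Theorem~\ref{ThmCharOfMIsometricUWSByOperatorPolynomial}(iii)-polynomial of $S_k$.

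The implication (i)$\Rightarrow$(ii) is trivial. For (ii)$\Rightarrow$(iii): the unilateral shift unitarily equivalent to $S_0$ is $m$-isometric with positive invertible weights, its first $m-1$ weights are $S_1,\dots,S_{m-1}$ (commuting by hypothesis), and its associated polynomial is $p$; hence (ii)$\Rightarrow$(iii) of Theorem~\ref{ThmWhenWeightsCommuteUWS} gives that $A_1,\dots,A_{m-1}$ commute. (Alternatively, Lemma~\ref{LemOperatorPolynomialFromCharOfMIsometricUWS} writes each $A_j$ as a real linear combination of $I,|S_1|^2,\dots,|S_{m-1}\cdots S_1|^2$, and these operators pairwise commute once $S_1,\dots,S_{m-1}$ are commuting positive operators, since then $S_\ell\cdots S_1$ is positive and $|S_\ell\cdots S_1|^2=S_1^2\cdots S_\ell^2$.)

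The heart of the matter is (iii)$\Rightarrow$(i). Assume $A_1,\dots,A_{m-1}$ commute; I will show that for every $k\in\mathbb{Z}$ with $k\le0$ the weights $S_{k+1},S_{k+2},\dots$ of $S_k$ commute, and then let $k\to-\infty$. For $k=0$ the associated polynomial is $p$, whose coefficients commute, so Theorem~\ref{ThmWhenWeightsCommuteUWS}(iii)$\Rightarrow$(i) applies directly. For $k<0$, set $W_k:=S_0\cdots S_{k+1}$, so that \eqref{ProofFormPolynomialForNegativeSk} becomes $p_k(z)=W_k^{\ast}\,p(z+k)\,W_k$. Expanding $p(z+k)=\sum_{\ell=0}^{m-1}B_\ell z^\ell$, each $B_\ell$ is a real linear combination of $I,A_1,\dots,A_{m-1}$, so the $B_\ell$ pairwise commute and the coefficients of $p_k$ are $W_k^{\ast}B_\ell W_k$. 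The key point is that $W_kW_k^{\ast}$ lies in the commutant of the $B_\ell$: since $p_k(0)=I$ we get $W_k^{\ast}p(k)W_k=I$, i.e.\ $W_kW_k^{\ast}=p(k)^{-1}$, and $p(k)=I+\sum_j k^j A_j$ commutes with every $A_j$, hence so does its inverse. Sliding $W_kW_k^{\ast}$ through, $W_k^{\ast}B_\ell W_k$ and $W_k^{\ast}B_{\ell'}W_k$ commute for all $\ell,\ell'$; thus the coefficients of $p_k$ commute, and Theorem~\ref{ThmWhenWeightsCommuteUWS}(iii)$\Rightarrow$(i) applied to $S_k$ shows $S_{k+1},S_{k+2},\dots$ commute. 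Since every $S_j$ occurs among the weights of $S_k$ for $k$ small enough, all weights of $S$ commute.

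I expect the only real obstacle to be the index bookkeeping: unwinding \eqref{ProofFormPolynomialForNegativeSk} into the form $p_k=W_k^{\ast}\,p(\cdot+k)\,W_k$ with the right ordered products (here positivity of the weights, giving $S_j^{\ast}=S_j$, is used) and checking $W_kW_k^{\ast}=p(k)^{-1}$. A more computational alternative bypasses $p_k$: reindex $V_j:=S_{-j+1}$ ($j\ge1$) and check $p(-n)^{-1}=|V_n\cdots V_1|^2$ while $p(n)=|S_n\cdots S_1|^2$ for $n\ge1$; commutativity of the $A_j$ makes all $p(s)$, $s\in\mathbb{Z}$, mutually commute, so Lemma~\ref{LemCommutingModuli} applied to $(S_j)_{j\ge1}$ and to $(V_j)_{j\ge1}$ yields commutativity within the nonnegative and within the nonpositive weights, with cross-commutativity following by an induction peeling off $|S_0|^{\pm2}$ from the explicit products — correct but heavier.
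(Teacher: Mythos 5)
Your proposal is correct and follows essentially the same route as the paper: reduce to the restrictions $S_k$ ($k\le 0$), observe that the polynomial of $S_k$ is $p_k(z)=B_k^{\ast}p(z+k)B_k$ with $B_k=S_0\cdots S_{k+1}$, use $B_kB_k^{\ast}=p(k)^{-1}$ (which commutes with the algebra generated by the $A_j$) to conclude that the coefficients of $p_k$ commute, and then invoke Theorem~\ref{ThmWhenWeightsCommuteUWS}. The only cosmetic difference is that you derive $B_kB_k^{\ast}=p(k)^{-1}$ from $p_k(0)=I$ rather than from \eqref{FormMIsometricBWSPolynomialNegativeWeights}, and you work with the coefficients of $p(z+k)$ directly instead of expanding the $A_j$ via Lemma~\ref{LemOperatorPolynomialFromCharOfMIsometricUWS}.
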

\begin{proof}
    The implications (i)$ \Longrightarrow $(ii) and (ii)$ \Longrightarrow $(iii) may be proved in exactly the same way as in Theorem \ref{ThmWhenWeightsCommuteUWS}.\\
    (iii)$ \Longrightarrow $(i). By Observation \ref{ObsOnOperatorsSk}, $ S_{k} $ is $ m $-isometric for every $ k\in \mathbb{Z} $, $ k\le 0 $. It is sufficient to show that for every $ k<0 $ the weights of $ S_{k} $ commute. Proceeding as in the proof of Theorem \ref{ThmCharOfMIsometricBWSByOperatorPolynomial}, we get that $ p $ is the polynomial satisfying Theorem \ref{ThmCharOfMIsometricUWSByOperatorPolynomial}(iii) for $ S_{0} $ and the polynomials $ p_{k} $ ($ k<0 $) satisfying Theorem \ref{ThmCharOfMIsometricUWSByOperatorPolynomial}(iii) for $ S_{k} $ are given by \eqref{ProofFormPolynomialForNegativeSk}. By Theorem \ref{ThmWhenWeightsCommuteUWS}, it is enough to prove that the coefficients of $ p_{k} $ commute. From \eqref{ProofFormPolynomialForNegativeSk} it follows that
    \begin{equation*}
        p_{k}(z) = \sum_{j=1}^{m-1}B_{k}^{\ast}A_{j}B_{k}(z+k)^{j}+B_{k}^{\ast}B_{k}, \quad k\in \mathbb{Z}, k<0,
    \end{equation*}
    where $ B_{k} = S_{0}\cdots S_{k+1} $ ($ k<0 $). By Lemma \ref{LemOperatorPolynomialFromCharOfMIsometricUWS}, for every $ j=1,\ldots,m-1 $, $ A_{j} $ is a linear combination of operators $ I,\lvert S_{1}\rvert^{2},\cdots,\lvert S_{m-1}\cdots S_{1}\rvert^{2} $ with real coefficients. Hence, it suffice to show that for every $ j,\ell\in \left\{ 1,\ldots m-1\right\} $,
    \begin{equation}
        \label{ProofFormCoeffsCommute1}
        B_{k}^{\ast}\lvert S_{j}\cdots S_{1}\rvert^{2}B_{k}B_{k}^{\ast}\lvert S_{\ell}\cdots S_{1}\rvert^{2}B_{k} = B_{k}^{\ast}\lvert S_{\ell}\cdots S_{1}\rvert^{2}B_{k}B_{k}^{\ast}\lvert S_{j}\cdots S_{1}\rvert^{2}B_{k}
    \end{equation}
    and
    \begin{equation}
        \label{ProofFormCoeffsCommute2}
        B_{k}^{\ast}\lvert S_{j}\cdots S_{1}\rvert^{2}B_{k}B_{k}^{\ast}B_{k} = B_{k}^{\ast}B_{k}B_{k}^{\ast}\lvert S_{j}\cdots S_{1}\rvert^{2}B_{k}.
    \end{equation}
    Since $ B_{k} $ is invertible, \eqref{ProofFormCoeffsCommute1} and \eqref{ProofFormCoeffsCommute2} are equivalent to
    \begin{equation*}
        \lvert S_{j}\cdots S_{1}\rvert^{2}B_{k}B_{k}^{\ast}\lvert S_{\ell}\cdots S_{1}\rvert^{2} = \lvert S_{\ell}\cdots S_{1}\rvert^{2}B_{k}B_{k}^{\ast}\lvert S_{j}\cdots S_{1}\rvert^{2}
    \end{equation*}
    and
    \begin{equation*}
        \lvert S_{j}\cdots S_{1}\rvert^{2}B_{k}B_{k}^{\ast} = B_{k}B_{k}^{\ast}\lvert S_{j}\cdots S_{1}\rvert^{2},
    \end{equation*}
    respectively. But, by the equality $ B_{k}B_{k}^{\ast} = \lvert S_{k+1}\cdots S_{0}\rvert^{2} $, we deduce from (iii) that
    \begin{align*}
        \lvert S_{j}\cdots S_{1}\rvert^{2}B_{k}B_{k}^{\ast}\lvert S_{\ell}\cdots S_{1}\rvert^{2} &= p(j)p(k)p(\ell) \\
        &= p(\ell)p(k)p(j) = \lvert S_{\ell}\cdots S_{1}\rvert^{2}B_{k}B_{k}^{\ast}\lvert S_{j}\cdots S_{1}\rvert^{2}
    \end{align*}
    and
    \begin{align*}
        \lvert S_{j}\cdots S_{1}\rvert^{2}B_{k}B_{k}^{\ast} = p(j)p(k) = p(k)p(j) = B_{k}B_{k}^{\ast}\lvert S_{j}\cdots S_{1}\rvert^{2}
    \end{align*}
    for every $ j,\ell\in \left\{ 1,\ldots,m-1 \right\} $. Thus, \eqref{ProofFormCoeffsCommute1} and \eqref{ProofFormCoeffsCommute2} hold, so the coefficients of $ p_{k} $ commute. By Theorem \ref{ThmWhenWeightsCommuteUWS}, the weights of $ S_{k} $ commute for every $ k\in \mathbb{Z} $, $ k<0 $.
\end{proof}

In \cite[Problem 4.9]{bermudezLocalSpectral2024} Berm\'{u}dez et.al. posed a question whether the adjoint of $ m $-isometric invertible operator is also $ m $-isometric. In the following result we characterize bilateral weighted shifts with operator weights, which possess this property.
\begin{theorem}
    \label{ThmAdjointIsMIsometricBWS}
    Let $ H $ be a Hilbert space and let $ S $ be an $ m $-isometric bilateral weighted shift with invertible weights $ (S_{j})_{j\in \mathbb{Z}} $. The following conditions are equivalent:
    \begin{enumerate}
        \item $ S^{\ast} $ is $ m $-isometric,
        \item the polynomial $ p\in \mathbf{B}(H)_{m-1}[z] $ as in Theorem \ref{ThmCharOfMIsometricBWSByOperatorPolynomial}(ii) is invertible in $ \mathbf{B}(H)_{m-1}[z] $, that is, there exists a polynomial $ \widetilde{p}\in \mathbf{B}(H)_{m-1}[z] $ such that
        \begin{equation}
            \label{FormMIsometricAdjointByInvertiblePolynomial}
            p(z) \cdot \widetilde{p}(z) = \widetilde{p}(z)\cdot p(z) =I, \quad z\in \mathbb{C}.
        \end{equation}
    \end{enumerate}
\end{theorem}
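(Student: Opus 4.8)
The plan is to reduce the condition ``$S^{\ast}$ is $m$-isometric'' to a statement about the polynomial $p$ by combining Lemma \ref{LemOnAdjointOfBWS} with the characterization in Theorem \ref{ThmCharOfMIsometricBWSByOperatorPolynomial}, and then to recognize the resulting statement as precisely the invertibility of $p$ in $\mathbf{B}(H)_{m-1}[z]$.

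First I would apply Lemma \ref{LemOnAdjointOfBWS}: $S^{\ast}$ is unitarily equivalent to the bilateral weighted shift $\widehat{S}$ with weights $T_{j}:=S_{-j+1}^{\ast}$, and these weights are invertible because the $S_{j}$ are. Since $m$-isometry is preserved by unitary equivalence, $S^{\ast}$ is $m$-isometric if and only if $\widehat{S}$ is. Applying Theorem \ref{ThmCharOfMIsometricBWSByOperatorPolynomial} to $\widehat{S}$, the latter holds if and only if there exists $q\in\mathbf{B}(H)_{m-1}[z]$ with $q(0)=I$ and, for every $n\in\mathbb{Z}_{1}$, $q(n)=\lvert T_{n}\cdots T_{1}\rvert^{2}$ and $q(-n)=\lvert T_{-n+1}^{\ast}\cdots T_{0}^{\ast}\rvert^{-2}$. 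Unwinding the definition of $T_{j}$ gives $T_{n}\cdots T_{1}=S_{-n+1}^{\ast}\cdots S_{0}^{\ast}$ and $T_{-n+1}^{\ast}\cdots T_{0}^{\ast}=S_{n}\cdots S_{1}$, so comparing with \eqref{FormMIsometricBWSPolynomialPositiveWeights}--\eqref{FormMIsometricBWSPolynomialNegativeWeights} for $S$ yields $q(n)=p(-n)^{-1}$ and $q(-n)=p(n)^{-1}$ for $n\in\mathbb{Z}_{1}$. Replacing $q$ by $\widetilde{q}(z):=q(-z)\in\mathbf{B}(H)_{m-1}[z]$ and using $q(0)=I=p(0)^{-1}$, this says exactly: $S^{\ast}$ is $m$-isometric if and only if there is $\widetilde{q}\in\mathbf{B}(H)_{m-1}[z]$ with $\widetilde{q}(n)=p(n)^{-1}$ for every $n\in\mathbb{Z}$.

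It then remains to show that the existence of such a $\widetilde{q}$ is equivalent to condition (ii). One direction is immediate: if $\widetilde{p}$ is the inverse of $p$ in $\mathbf{B}(H)_{m-1}[z]$, then $\widetilde{p}(n)p(n)=I$ forces $\widetilde{p}(n)=p(n)^{-1}$. For the converse, suppose $\widetilde{q}(n)=p(n)^{-1}$ for all $n\in\mathbb{Z}$. Since $p(n)^{-1}$ is a two-sided inverse of $p(n)$ in $\mathbf{B}(H)$, both polynomials $\widetilde{q}(z)p(z)-I$ and $p(z)\widetilde{q}(z)-I$ in $\mathbf{B}(H)[z]$ vanish at every integer. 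A polynomial in $\mathbf{B}(H)[z]$ with infinitely many zeros is identically zero --- pairing with arbitrary $h,h'\in H$ reduces this to the scalar fundamental theorem of algebra applied to $z\mapsto\langle(\widetilde{q}(z)p(z)-I)h,h'\rangle$ --- so $\widetilde{q}(z)p(z)=p(z)\widetilde{q}(z)=I$ for all $z\in\mathbb{C}$, i.e.\ $\widetilde{q}$ witnesses (ii) with $\widetilde{p}=\widetilde{q}$.

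I expect the only delicate part to be the index bookkeeping: keeping the reflections $j\mapsto-j+1$ (from $S$ to $\widehat{S}$) and $z\mapsto-z$ (from $q$ to $\widetilde{q}$) straight, and writing the modulus identities $\lvert T_{n}\cdots T_{1}\rvert^{2}=\lvert S_{-n+1}^{\ast}\cdots S_{0}^{\ast}\rvert^{2}$ and $\lvert T_{-n+1}^{\ast}\cdots T_{0}^{\ast}\rvert^{2}=\lvert S_{n}\cdots S_{1}\rvert^{2}$ with the factors in the correct order. There is no conceptual obstacle: once the translation is in place, invertibility of $p$ drops out of the polynomial-identity argument, and conversely.
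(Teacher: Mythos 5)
Your argument is correct and follows essentially the same route as the paper: reduce via Lemma \ref{LemOnAdjointOfBWS} and Theorem \ref{ThmCharOfMIsometricBWSByOperatorPolynomial} applied to the shift with weights $S_{-j+1}^{\ast}$, reflect the resulting polynomial by $z\mapsto -z$, and conclude by the fact that an operator polynomial (here $p\widetilde{q}-I$ and $\widetilde{q}p-I$) vanishing at all integers vanishes identically. Your index bookkeeping and the final polynomial-identity step match the paper's proof, which phrases the same conclusion as $p(n)\widetilde{p}(n)=I$ for all $n\in\mathbb{Z}$ forcing $p\cdot\widetilde{p}=I$ since $p\cdot\widetilde{p}\in\mathbf{B}(H)_{2m-2}[z]$.
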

\begin{proof}
    By Lemma \ref{LemOnAdjointOfBWS}, $ S^{\ast} $ is unitarily equivalent to the bilateral weighted shift $ \widehat{S} \in \mathbf{B}(\ell^{2}(\mathbb{Z},H)) $ with weights $ (S_{-j+1}^{\ast})_{j\in \mathbb{Z}} $; $ S^{\ast} $ is $ m $-isometric if and only if so is $ \widehat{S} $. By Theorem \ref{ThmCharOfMIsometricBWSByOperatorPolynomial}, $ \widehat{S} $ is $ m $-isometric if and only if there exists a polynomial $ \widehat{p}\in \mathbf{B}(H)_{m-1}[z] $ such that
    \begin{align}
        \label{ProofFormPolynomialOfAdjointZero}
        \widehat{p}(0) &= I,\\
        \label{ProofFormPolynomialOfAdjointPositive}
        \widehat{p}(n) &= \lvert S_{-n+1}^{\ast}\cdots S_{0}^{\ast}\rvert^{2}, \quad n\in \mathbb{Z}_{1},\\
        \label{ProofFormPolynomialOfAdjointNegative}
        \widehat{p}(-n) &= \lvert S_{n}\cdots S_{1}\rvert^{-2}, \quad n\in \mathbb{Z}_{1}.
    \end{align}
    (i)$ \Longrightarrow $(ii). Let $ \widehat{p}\in \mathbf{B}(H)_{m-1}[z] $ satisfy \eqref{ProofFormPolynomialOfAdjointZero}-\eqref{ProofFormPolynomialOfAdjointNegative} and define $ \widetilde{p}(z) = \widehat{p}(-z) $, $ z\in \mathbb{C} $. Then $ \widetilde{p}\in \mathbf{B}(H)_{m-1}[z] $ and
    \begin{align*}
        p(0)\widetilde{p}(0) &= p(0)\widehat{p}(0) = I\cdot I = I,\\
        p(n)\widetilde{p}(n) &= p(n)\widehat{p}(-n) = \lvert S_{n}\cdots S_{1}\rvert^{2}\lvert S_{n}\cdots S_{1}\rvert^{-2} = I, \quad n\in \mathbb{Z}_{1},\\
        p(-n)\widetilde{p}(-n) &= p(-n)\widehat{p}(n) = \lvert S_{-n+1}^{\ast}\cdots S_{0}^{\ast}\rvert^{-2}\lvert S_{-n+1}^{\ast}\cdots S_{0}^{\ast}\rvert^{2} = I, \quad n\in \mathbb{Z}_{1}.
    \end{align*}
    Since $ p\cdot \widetilde{p}\in \mathbf{B}_{2m-2}[z] $ and $ p(n)\widetilde{p}(n) = I $ for every $ n\in \mathbb{Z} $, it follows that $ p\cdot \widetilde{p} = I $. Similarly, $ \widetilde{p}\cdot p = I $.\\
    (ii)$ \Longrightarrow $(i). Proceeding as in the proof of the converse implication, we get that the polynomial $ \widehat{p}(z) = \widetilde{p}(-z) $, $ z\in \mathbb{C} $, satisfies \eqref{ProofFormPolynomialOfAdjointZero}-\eqref{ProofFormPolynomialOfAdjointNegative}.
\end{proof}
As a corollary, we obtain that the only $ m $-isometric classical bilateral weighted shifts (with scalar weights), for which the adjoint is also $ m $-isometric, are unitary. This answers the question in \cite[Problem 4.9]{bermudezLocalSpectral2024} in negative.
\begin{corollary}
    \label{CorClassicalBWSWhenAdjointIsMIsometric}
    Let $ S\in \mathbf{B}(\ell^{2}(\mathbb{Z},\mathbb{C})) $ be an $ m $-isometric bilateral weighted shift with weights $ (\lambda_{j})_{j\in \mathbb{Z}}\subset \mathbb{C}\setminus\{0\} $. The following conditions are equivalent:
    \begin{enumerate}
        \item $ S^{\ast} $ is $ m $-isometric,
        \item $ S $ is unitary.
    \end{enumerate}
\end{corollary}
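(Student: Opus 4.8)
The plan is to derive this directly from Theorem~\ref{ThmAdjointIsMIsometricBWS}, specialized to the scalar setting. Here $ \mathbf{B}(\mathbb{C}) = \mathbb{C} $, so $ \mathbf{B}(H)_{m-1}[z] = \mathbb{C}_{m-1}[z] $, and since $ S $ is $ m $-isometric with nonzero (hence invertible) weights, Theorem~\ref{ThmCharOfMIsometricBWSByOperatorPolynomial} furnishes a polynomial $ p\in \mathbb{C}_{m-1}[z] $ with $ p(0) = 1 $, $ p(n) = \lvert \lambda_{n}\cdots\lambda_{1}\rvert^{2} $ for $ n\in \mathbb{Z}_{1} $, and $ p(-n) = \lvert \lambda_{-n+1}\cdots\lambda_{0}\rvert^{-2} $ for $ n\in \mathbb{Z}_{1} $.

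For the implication (ii)$ \Longrightarrow $(i) I would simply note that if $ S $ is unitary, then $ S^{\ast} = S^{-1} $ is unitary, hence an isometry, hence $ m $-isometric.

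For (i)$ \Longrightarrow $(ii), the crux is that by Theorem~\ref{ThmAdjointIsMIsometricBWS} the polynomial $ p $ must be invertible in $ \mathbb{C}_{m-1}[z] $, i.e. $ p\widetilde{p} = 1 $ for some $ \widetilde{p}\in \mathbb{C}[z] $; since $ \mathbb{C}[z] $ is an integral domain, $ p $ is forced to be a nonzero constant, and $ p(0) = 1 $ then gives $ p\equiv 1 $. From $ \lvert \lambda_{n}\cdots\lambda_{1}\rvert^{2} = p(n) = 1 $ for all $ n\in \mathbb{Z}_{1} $ and $ \lvert \lambda_{-n+1}\cdots\lambda_{0}\rvert^{2} = p(-n)^{-1} = 1 $ for all $ n\in \mathbb{Z}_{1} $ I would extract, by taking ratios of consecutive products (with $ n=1 $ as the base case on each side), that $ \lvert \lambda_{j}\rvert = 1 $ for every $ j\in \mathbb{Z} $. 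Finally, unimodularity of all weights makes the matrix of $ S^{\ast}S $ the identity by Observation~\ref{ObsNthPowerOfShift}, and the same computation applied to the bilateral weighted shift with weights $ \lambda_{-j+1}^{\ast} $ (which represents $ S^{\ast} $ up to unitary equivalence by Lemma~\ref{LemOnAdjointOfBWS}, and whose weights are also unimodular) gives $ SS^{\ast} = I $; hence $ S $ is unitary.

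I do not expect a genuine obstacle: the argument hinges entirely on the elementary fact that the units of the polynomial ring $ \mathbb{C}[z] $ are exactly the nonzero constants, which instantly collapses the invertibility condition of Theorem~\ref{ThmAdjointIsMIsometricBWS} to $ p\equiv 1 $, and everything else is routine bookkeeping with the moduli of the weights.
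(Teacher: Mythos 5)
Your proposal is correct and follows essentially the same route as the paper: both reduce to Theorem~\ref{ThmAdjointIsMIsometricBWS} and use the fact that the units of $\mathbb{C}[z]$ are the nonzero constants, so $p(0)=1$ forces $p\equiv 1$. The only cosmetic difference is at the end, where the paper concludes that $S$ is isometric and invokes \eqref{FormWhenBWSIsInvertible} to get unitarity, while you verify $S^{\ast}S=SS^{\ast}=I$ directly from unimodularity of the weights; both are fine.
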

\begin{proof}
    It suffices to prove that (i) implies (ii). We have $ \mathbf{B}(\mathbb{C}) = \mathbb{C} $ (up to an isomorphism). Because the only invertible elements of $ \mathbb{C}[z] $ are constant polynomials, it follows that in Theorem \ref{ThmAdjointIsMIsometricBWS}(ii), it has to be $ p = 1 $, so $ S $ is isometric. Since, by \eqref{FormWhenBWSIsInvertible}, $ S $ is invertible, we get that $ S $ is unitary.
\end{proof}
In general, the conclusion of Corollary \ref{CorClassicalBWSWhenAdjointIsMIsometric} is not true if $ \dim H \ge 2 $ as is shown in the following example.
\begin{example}
    Define $ p\in \mathbf{B}(\mathbb{C}^{2})_{2}[z] $ by the formula:
    \begin{equation*}
        p(z) = \begin{bmatrix}
            1 & z\\
            z & z^{2}+1
        \end{bmatrix}, \quad z\in \mathbb{C}.
    \end{equation*}
    It is a matter of routine to verify that $ p(n) $ is positive and invertible for every $ n\in \mathbb{Z} $.  Moreover, for every $ n\in \mathbb{Z} $ the characteristic polynomial of $ p(n+1)p(n)^{-1} $ is equal to $ h(z) = z^{2}-3z+1 $. Hence,
    \begin{equation*}
        r(p(n+1)p(n)^{-1}) = r(p(1)p(0)^{-1}), \quad n\in \mathbb{Z}.
    \end{equation*}
    Therefore, $ \sup\left\{ r(p(n+1)p(n)^{-1})\colon n\in \mathbb{Z} \right\} < \infty $. By Remark \ref{RemSupremumBySpectralRadius} and Theorem \ref{ThmConstructingMIsometricBWS} there exists a $ 3 $-isometric bilateral weighted shift $ S\in \mathbf{B}(\ell^{2}(\mathbb{Z},\mathbb{C}^{2})) $ with positive and invertible weights $ (S_{j})_{j\in \mathbb{Z}} $ satisfying \eqref{FormMIsometricBWSPolynomialPositiveWeights} and \eqref{FormMIsometricBWSPolynomialNegativeWeights}. Set
    \begin{equation*}
        q(z) := \begin{bmatrix}
            z^{2}+1 & -z\\
            -z & 1
        \end{bmatrix}, \quad z\in \mathbb{C}.
    \end{equation*}
    Then $ q\in \mathbf{B}(\mathbb{C}^{2})_{2}[z] $ and $ p\cdot q = q\cdot p = I $. Hence, by Theorem \ref{ThmAdjointIsMIsometricBWS}, $ S^{\ast} $ is also 3-isometric.
\end{example}

	\bibliographystyle{plain}

\end{document}